
\documentclass{elsarticle}
\title{On a set-valued Young integral with applications to differential inclusions}

\bibliographystyle{plain-initials}

\usepackage{amsfonts,amssymb,amsmath,amsthm,mathtools}
\usepackage{url} 
\usepackage{enumerate}
\usepackage{xcolor}
\usepackage[normalem]{ulem} 

\allowdisplaybreaks

\newtheorem{theorem}{Theorem}[section]

\newtheorem{proposition}[theorem]{Proposition}
\newtheorem{corollary}[theorem]{Corollary}

\theoremstyle{definition}
\newtheorem{definition}[theorem]{Definition}

\newtheorem{remark}[theorem]{Remark}
\newtheorem{example}[theorem]{Example}
\newtheorem{notation}[theorem]{Notation}
%

\newcommand\R{\mathbb{R}}
\newcommand\N{\mathbb{N}}
\newcommand\C{\mathrm{C}} 
\newcommand\D{\mathrm{D}} 
\newcommand\Dem{\mathrm{Dem}} 
\newcommand\Haus{\mathrm{H}} 
\newcommand\XXX{\mathrm{X}} 
\newcommand\norm[1]{\left\lVert #1 \right\rVert}
\newcommand\abs[1]{\left\lvert #1 \right\rvert}
\newcommand\CCO[1]{\left( #1 \right)}
\newcommand\tq{\textrm{ $;$ }}

\newcommand\cc{\mathcal{P}_{\mathrm{ck}}}  
\newcommand\cl{\mathcal{P}_{\mathrm{f}}} 
\newcommand\komp{\mathcal{P}_{\mathrm{k}}}  

\newcommand\proj[1]{\pi_{#1}} 

\newcommand\Aint[1]{(\mathrm{A}_{#1})\!\int} 

\newcommand\Cte{\mathfrak{C}}
\newcommand\RHspace{\mathbb{S}} 

\newcommand\matr{M_{e,d}(\mathbb R)}
\newcommand\matrl{M_{\ell,d}(\mathbb R)}
\newcommand{\ccm}{\cc(\matr)}

\newcommand\St{\mathrm{St}} 
\newcommand\StLip[1]{\mathfrak{k}_{#1}} 

\newcommand\M{\mathbb{M}} 
\newcommand{\Domain}{\mathop{\mathrm{Dom}}}
\newcommand{\Li}{\mathop{\mathrm{Li}}\limits}
\newcommand\Ls{\mathop{\mathrm{Ls}}\limits}

\newcommand\HA{\textrm{(A)}} 
\newcommand\HB{\textrm{(B)}}
\newcommand\rmin{r_{\min}}
\newcommand\B{\mathbb{B}}

\begin{document}

\begin{frontmatter}
  
\author[1]{Laure \textsc{Coutin}}
\ead{laure.coutin@math.univ-toulouse.fr}
\author[2]{Nicolas \textsc{Marie}}
\ead{nmarie@parisnanterre.fr}
\author[3]{Paul \textsc{raynaud de Fitte}} 
\ead{prf@univ-rouen.fr}

\address[1]{IMT, Universit\'e Paul Sabatier, Toulouse, France}
\address[2]{MODAL'X, Universit\'e Paris Nanterre, Nanterre, France}
\address[3]{LMRS, Universit\'e de Rouen Normandie, Rouen, France}

\begin{abstract}
We present a new Aumann-like integral for a Hölder multifunction with respect
to a Hölder signal, based on the Young integral of a particular set of
Hölder selections. This restricted Aumann integral has
continuity properties that allow for numerical approximation 
as well as an existence theorem
for an abstract stochastic differential inclusion.
This is applied to concrete examples of
first order and second order stochastic differential inclusions 
directed by fractional Brownian motion.   
\end{abstract}

\begin{keyword}
  set-valued integral \sep Aumann integral \sep Young integral 
\end{keyword}

\end{frontmatter}

%


%
\section{Introduction}
Consider $d,e\in\mathbb N^*$, a $\beta$-H\"older continuous signal $w
: [0,T]\rightarrow\mathbb R^d$ with $\beta\in (0,1)$, and an
$\alpha$-H\"older continuous multifunction $F$ defined on $[0,T]$ with
convex compact values in the set $\matr$ of linear mappings from
$\R^d$ to $\R^e$,
where  
$\alpha\in (0,1)$ and $\alpha +\beta > 1$.
The purpose of this paper is to define a set-valued Young integral
of $F$ with respect to $w$, 
of
the form
\begin{equation}\label{integral_introduction}
\int_{0}^{T}F(s)\,dw(s) =
\left\{\int_{0}^{T}f(s)\,dw(s)
\tq
f\in\mathcal S(F)\right\}
\end{equation}
in a nontrivial way, but with a small enough set of selections $\mathcal S(F)$,
so as 
to get algebraic and topological properties (convexity, boundedness,
compactness, continuity, etc.) allowing to give a sense and establish
the existence of solutions to several types of differential
inclusions.

There have been many different approaches to set-valued
integration with respect to a nonnegative $\sigma$-additive measure $\mu$.
The most popular one is due to Aumann \cite{Aumann}, based
on Lebesgue integrals of selections:
\begin{equation}
\int_{0}^{T}F(s)\,d\mu(s) =
\left\{\int_{0}^{T}f(s)\,d\mu(s)
\tq
f\in\mathcal{S}_{L^1(\mu)}(F) \right\}
\end{equation}
where $\mathcal{S}_{L^1(\mu)}(F)$ denotes the set of all
$\mu$-integrable selections of $F$. 
In the case of multifunctions with convex
compact values, other approaches such as Hukuhara's \cite{Hukuhara} or
Debreu's \cite{Debreu} are restricted to multifunctions with compact
convex values and use the cone structure of the space of convex
compact sets.

The concept of Aumann integral has been applied in several papers
to integration of multivalued stochastic processes
using classical 
stochastic calculus
and a definition of the form \eqref{integral_introduction},
where $w$ is a Brownian motion, or
more generally a semimartingale, e.g., \cite{kisielewicz1997,MM11,
  malinowski-michta-sobolewska}. 
Despite this similarity,
the setting of stochastic calculus
is quite different from ours, 
since the stochastic integral needs a probability space to make
sense.
In this context, some variants have been developped, the main one by
Jung and Kim \cite{jung-kim}, which is the
decomposable hull of an integral of the form
\eqref{integral_introduction}, 
has been extensively studied by Polish mathematicians from Zielona
G\'ora 
\cite{kisielewicz2012,kisielewicz-motyl,KISIELEWICZ20,michta2015,michta2020},
to cite but a few papers and a book. 


In the case of a deterministic signal $w$ with possibly infinite
variation,
Michta and Motyl \cite{MM20,MM20BGV} are the only
references so far defining a set-valued Young integral \`a la Aumann
of the form
\eqref{integral_introduction}, for convex as well as nonconvex-valued
multifunctions.  
In their approach, the set of selections $\mathcal S(F)$ is large,
namely, in the case of our setting,
$\mathcal S(F)$ is the set of all $\alpha$-H\"older continuous
selections of $F$.
Of course this is a natural definition, and the authors obtain basic
expected properties on the set-valued integral: nonemptyness,
convexity and regularity of the integral with respect to $\mathcal
S(F)$ (not $F$). In our approach, the set of selections is smaller: 
\begin{displaymath}
\mathcal S_{\alpha,r}(F) :=
\{f\textrm{ selection of }F :
\|f\|_{\alpha,T}\leqslant r\}.
\end{displaymath}
The "tuning parameter" $r > 0$ controls both the $\alpha$-H\"older
seminorm of $F$ and of the considered selections. This allows to establish
the compactness of the integral, to get the upper semicontinuity of
the integral with respect to $F$, and then to establish the existence
of solutions to some differential inclusions. Note that our integral
converges to that of Michta and Motyl \cite{MM20} when the tuning
parameter $r$ goes to $+\infty$.
However, our integral is always compact-valued,
whereas that of \cite{MM20} may be unbounded, see Example
\ref{exple:comparison} below. 

On differential inclusions driven by $\alpha$-H\"older continuous
signals, let us mention Bailleul {\it et al.} \cite{BBC20}. In this
paper, the authors establish the existence of solutions to a
differential inclusion using the approach of Aubin and Cellina
\cite{AC84}. Let us also cite Levakov and Vas'kovskii
\cite{levakov-vaskovskii} who mix pathwise integration with respect
to fractional Brownian motion with It\^o's integral with respect to
standard Brownian motion, following Guerra and Nualart 
\cite{guerra-nualart}. These works on differential inclusions
implicitely use an Aumann type set-valued integral
of the form \eqref{integral_introduction}.

As an application of our set-valued Young integral,
we are able to define a stochastic set-valued integral
with respect to the fractional Brownian motion (fBm) of Hurst index $H
> 1/2$, and then to establish the existence of solutions to several
types of stochastic differential inclusions driven by the fBm.

This paper is organized as follows.
Section \ref{section_preliminaries} recalls preliminary definitions
and results on Steiner's selections
and on the Young integral for point-valued functions.
Section \ref{section_set_valued_Young_integral} deals with the
set-valued Young integral.
Finally, Section \ref{section_fixed_points} provides a fixed-point
theorem for functionals of the set-valued Young integral which allows,
in particular, to get the existence of solutions to several types of
differential inclusions.
%


%
\section{Preliminaries}\label{section_preliminaries}
\subsection{Notations and basic definitions}
Let $d\geqslant 1$ be an integer.
\begin{enumerate}[1.]
 \item The set of nonempty closed subsets of $\R^d$ is denoted by $\cl(\R^d)$.  
 The semi-\emph{Hausdorff distance} on $\cl(\R^d)$ is denoted by $d_\Haus$:
 for all $(A,B)\in\cl(\mathbb R^d)^2$
 \begin{equation}\label{eq:dH}
 d_\Haus (A,B) =
 \max\left\{\sup_{a\in A}d(a,B)\tq
 \sup_{b\in B}d(b,A)\right\}
 =\sup_{x\in\R^d}\abs{d(x,A) - d(x,B)}
 \end{equation}
 (see, e.g., \cite{Beer}).
 For every $A\in\mathcal \cl(\R^d)$, we denote $\|A\|_{d_\Haus} :=
 d_\Haus(A,\{0_{\R^d}\}) =\sup\{\|a\|\tq a\in A\}$.

 \item Let $\cc(\R^d)$ be the space of nonempty, convex and compact
 subsets of $\mathbb R^d$. For any $C\in\cc(\R^d)$, the
 \emph{support function} of $C$ is the map
 \[
 \delta^*(.,C) :\, 
 \left\{\begin{array}{lcl}
  \R^d&\rightarrow&\R\\
  l &\mapsto&\max_{x\in C}\langle\ell,x\rangle.
 \end{array}\right.
 \]
 We have, for all $A,B\in\cc(\R^d)$,
 \[d_\Haus (A,B)
 =\sup_{\ell\in\R^d,\|\ell\|=1}\abs{\delta^*(\ell,A)-\delta^*(\ell,B)}.\]
 For any $C\in\cc(\R^d)$ and $\ell\in\mathbb R^d$, consider
 \begin{displaymath}
 Y(\ell,C) :=
 \left\{c\in C :\langle\ell,c\rangle =\delta^*(\ell,C)\right\}.
 \end{displaymath}
 If $Y(\ell,C)$ contains exactly one element, it is denoted by
 $y(\ell,C)$ and called \emph{exposed point} of $C$ with
 \emph{exposing direction} $\ell$. The set of all exposing directions
 for a given point of $C$ is denoted by $\mathcal T_C$.
 For all $A,B\in\cc(\R^d)$, the \emph{Demyanov distance} between $A$
 and $B$ is defined by
 \begin{displaymath}
 d_\D(A,B) :=
 \sup\{\|y(\ell,A) - y(\ell,B)\|\tq \ell\in\mathcal T_{A}\cap\mathcal T_{B}\}.
 \end{displaymath}
 Note that $d_\D(A,B)\geqslant d_\Haus(A,B)$ for all $A,B\in\cc(\R^d)$,
 see, e.g., \cite{rzezuchowski}.
 On $\R^d$, which we identify
 with the  subset of singletons of $\cc(\R^d)$, both distances $d_\D$
 and $d_\Haus$ 
 coincide with the distance induced by $\norm{.}$.
 
 \item For any $\alpha\in ]0,1[$,
 $\C^{\alpha\textrm{-H\"ol}}([0,T];\cc(\R^d))$
 (respectively $\C^{\alpha\textrm{-H\"ol}}_\D([0,T];\cc(\R^d))$)
 is the space of
 $\alpha$-H\"older continuous maps from $[0,T]$ into $\cc(\R^d)$,
 where $\cc(\R^d)$ is endowed with the Hausdorff distance
 (respectively, the Demyanov distance).
 In the sequel, $\C^{\alpha\textrm{-H\"ol}}([0,T];\cc(\R^d))$ is
 equipped with the $\alpha$-H\"older norm $N_{\alpha,T}(.)
 :=\|.\|_{\infty,T} +\|.\|_{\alpha,T}$, where,
 for $F\in \C^{\alpha\textrm{-H\"ol}}([0,T];$ $\cc(\R^d))$,
 \begin{displaymath}
 \|F\|_{\alpha,T}
 =\sup\left\{\frac{d_\Haus(F(s),F(t))}{|t - s|^{\alpha}}\tq
 s,t\in [0,T]\textrm{ and }s <
 t\right\},
 \end{displaymath}
 and (with inconsistent but convenient notations)
 \begin{equation*}
 \|F\|_{\infty,T}=\sup_{t\in[0,T]}\norm{F(t)}_{d_\Haus}. 
 \end{equation*}
 We denote by $\overline{B}_{\alpha,\cc}(F,\delta)$
 the closed ball of center $F$ and radius
 $\delta$ in the space $\C^{\alpha\textrm{-H\"ol}}([0,T];\cc(\R^d))$ .
 
 Similarly, the space $\C^{\alpha\textrm{-H\"ol}}_\Dem([0,T];\cc(\R^d))$ is
 equipped with the $\alpha$-H\"older norm $N_{\alpha,T,\Dem}(.)
 :=\|.\|_{\infty,T} +\|.\|_{\alpha,T,\Dem}$, where $d_\Haus$ is
 replaced by $d_\D$ in the above definition.
 
 \item For any $s,t\in [0,T]$ such that $t > s$, $\mathfrak D_{[s,t]}$ is the set of all dissections of $[s,t]$.
\end{enumerate}
%

\begin{remark}(R{\aa}dstr\"om-H\"ormander embedding)
Since $\cc(\R^d)$ is not a vector space, 
it may seem strange to use the notation
$\|.\|_{d_\Haus}$ 
or to call
$N_{\alpha,T}$ and $N_{\alpha,T,\Dem}$ "norms". 
Actually, there exists an embedding of $\cc(\R^d)$ into a vector space
that turns 
these mappings into true
norms.

More precisely, 
let us recall briefly the R{\aa}dstr\"om-H\"ormander embedding
\cite{Radstrom,Hormander},
following R{\aa}dstr\"om's construction \cite{Radstrom}. 
The space $\cc(\R^d)$ is endowed with
the scalar multiplication $\lambda A=\{\lambda a\tq a\in A\}$
and the
addition $A+ B:={\{a+b \tq a\in A
\textrm{ and }b\in B\}}$, 
for all $A,B\in\cc(\R^d)$ and $\lambda\in\R^+$. With these operations, 
$\cc(\R^d)$ is 
such that $A+C=B+C$ implies $A=B$
for all $A,B,C\in\cc(\R^d)$ and $\lambda_1A+\lambda_2
A=(\lambda_1+\lambda_2)A$ for all $\lambda_1,\lambda_2\in\R^+$ and
all $A\in\cc(\R^d)$. 
Taking equivalence classes of pairs $(A,B)\in\bigl(\cc(\R^d)\bigr)^2$ 
for the relation $(A,B)\sim(C,D)\Leftrightarrow
A+D=B+C$ 
leads to the construction of a vector space $\RHspace$ which extends
the convex cone $\cc(\R^d)$,
where each element $A\in\cc(\R^d)$ is identified with
$(A,\{0_{\R^d}\})$.
The distances $d_\Haus$ and $d_\D$ are extended to $\RHspace$ by
setting
\[d_\XXX\bigl((A,B),(C,D)\bigr)=d_\XXX(A+D,B+C),\]
where $\XXX$ denotes either $\Haus$ or $\D$. 
Since $d_\Haus$ and $d_\D$ are translation invariant and positively
homogeneous, the map 
$(A,B)\mapsto d_\XXX\bigl((A,B),(\{0_{\R^d}\},\{0_{\R^d})\bigr)$  
is a norm on $\RHspace$ which induces the distance $d_\XXX$. 
Then $N_{\alpha,T}$ and $N_{\alpha,T,\Dem}$ have natural extensions as
norms on   
$\C^{\alpha\textrm{-H\"ol}}([0,T];\RHspace)$ and 
$\C^{\alpha\textrm{-H\"ol}}_\D([0,T];\RHspace)$ respectively.
\end{remark}


%
\subsection{Steiner point and generalized Steiner selections}\label{subsection_GS_selections}
Let 
\begin{eqnarray*}
 \mathcal M & := &
 \{\mu\textrm{ probability measure on } B_{\R^d}(0,1)\\
 & &
 \hspace{3cm}\tq
 \exists\theta\in \C^1(B_{\R^d}(0,1);\mathbb R)\textrm{, }\mu(dx) =\theta(x)dx\}.
\end{eqnarray*}
%


%
\begin{definition}\label{def:Steiner point}
The \emph{Steiner point} of $C\in\cc(\R^d)$ is defined by
\begin{displaymath}
\St(C)
:=\frac{1}{v_d}\int_{\mathcal{T}_C\cap B_{\R^d}(0,1)}y(x,C)\,dx
\quad
\textrm{with}
\quad
v_d =\frac{\pi^{d/2}}{\Gamma(1 + d/2)}.
\end{displaymath}
\end{definition}
It is well known that ${\rm St}(C)\in C$ for any $C\in\cc(\R^d)$ (see the
more general Proposition \ref{Castaing_representation}
below). Furthermore, we have:
%


%
\begin{proposition}[Lipschitz property]
\label{prop:steinerlip}
The map $\St :\,\cc(\R^d)\mapsto \R^d$ is $\StLip{d}$-Lipschitz for
the Hausdorff distance $d_\Haus$, where
the sharp Lipschitz coefficient $\StLip{d}$ satisfies
\begin{equation}\label{eq:StLip}
\sqrt{\frac{2d}{\pi}}<\StLip{d}<\sqrt{\frac{2(d+1)}{\pi}}.
\end{equation}
\end{proposition}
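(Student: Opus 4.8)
The plan is to pass from the defining integral to the classical support-function representation of the Steiner point, because the Hausdorff distance is precisely the sup-norm distance between support functions, $d_\Haus(A,B) = \sup_{\norm{\ell}=1}\abs{\delta^*(\ell,A) - \delta^*(\ell,B)}$. Recall that $\ell\mapsto\delta^*(\ell,C)$ is sublinear, hence locally Lipschitz and differentiable at Lebesgue-almost every $\ell$, and that at each such point its gradient is exactly the exposed point $y(\ell,C)$; moreover $\mathcal T_C$ carries full Lebesgue measure in $\R^d$, so the integral over $\mathcal T_C\cap B_{\R^d}(0,1)$ in Definition \ref{def:Steiner point} coincides with $\int_{B_{\R^d}(0,1)}\nabla_\ell\delta^*(\ell,C)\,d\ell$. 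Applying the Gauss--Green formula componentwise on the unit ball, whose outward unit normal at $u$ is $u$ itself, I would obtain
\begin{equation*}
\St(C) = \frac{1}{v_d}\int_{S^{d-1}}\delta^*(u,C)\,u\,d\sigma(u),
\end{equation*}
where $S^{d-1}$ denotes the unit sphere and $\sigma$ its surface measure.

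From this linear representation the Lipschitz estimate is immediate: for $A,B\in\cc(\R^d)$ and any unit vector $\ell$,
\begin{equation*}
\langle\St(A)-\St(B),\ell\rangle
= \frac{1}{v_d}\int_{S^{d-1}}\CCO{\delta^*(u,A)-\delta^*(u,B)}\langle u,\ell\rangle\,d\sigma(u)
\leqslant \frac{d_\Haus(A,B)}{v_d}\int_{S^{d-1}}\abs{\langle u,\ell\rangle}\,d\sigma(u).
\end{equation*}
Taking $\ell$ in the direction of $\St(A)-\St(B)$ and noting, by rotation invariance, that $\int_{S^{d-1}}\abs{\langle u,\ell\rangle}\,d\sigma(u)$ does not depend on the unit vector $\ell$, gives $\norm{\St(A)-\St(B)}\leqslant\StLip{d}\,d_\Haus(A,B)$ with $\StLip{d} = v_d^{-1}\int_{S^{d-1}}\abs{\langle u,\ell\rangle}\,d\sigma(u)$. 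To see that this constant cannot be improved, I would approximate $u\mapsto\mathrm{sign}\langle u,\ell\rangle$ in $L^1(\sigma)$ by smooth, positively homogeneous degree-one functions of uniformly bounded supremum norm; after adding a large multiple of $\norm{\cdot}$ each such function becomes convex, hence a difference of two support functions, and the associated pairs of bodies drive the ratio $\norm{\St(A)-\St(B)}/d_\Haus(A,B)$ to $\StLip{d}$.

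It then remains to evaluate and bound $\StLip{d}$. Choosing $\ell = e_1$ and using the slicing formula $\int_{S^{d-1}}f(u_1)\,d\sigma(u) = \omega_{d-2}\int_{-1}^{1}f(t)(1-t^2)^{(d-3)/2}\,dt$ (with $\omega_{d-2}$ the total measure of $S^{d-2}$) reduces $\int_{S^{d-1}}\abs{\langle u,\ell\rangle}\,d\sigma(u)$ to an elementary Beta integral; combined with $v_d=\pi^{d/2}/\Gamma(1+d/2)$ this yields the closed form $\StLip{d} = \frac{2\Gamma(d/2+1)}{\sqrt\pi\,\Gamma((d+1)/2)}$. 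Writing $x=d/2$, the two inequalities \eqref{eq:StLip} are equivalent to $\sqrt{x}<\Gamma(x+1)/\Gamma(x+1/2)<\sqrt{x+1/2}$, and both follow at once from the strict log-convexity of $\Gamma$: applied to the midpoint $x+1/2$ of $(x,x+1)$ and to the midpoint $x+1$ of $(x+1/2,x+3/2)$ it gives $\Gamma(x+1/2)^2<\Gamma(x)\Gamma(x+1)$ and $\Gamma(x+1)^2<\Gamma(x+1/2)\Gamma(x+3/2)$, which rearrange to the desired bounds via $\Gamma(x)=\Gamma(x+1)/x$ and $\Gamma(x+3/2)=(x+1/2)\Gamma(x+1/2)$.

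The computations in the last paragraph are painless; the two genuinely delicate points are the passage to the support-function representation and the sharpness claim. For the former, $\delta^*(\cdot,C)$ is only Lipschitz and its gradient exists merely almost everywhere, so the Gauss--Green formula must be invoked in the form valid for convex (equivalently, Lipschitz or $BV$) functions, with care that the singular set $\R^d\setminus\mathcal T_C$ is Lebesgue-null. For the latter, establishing that $\StLip{d}$ is the \emph{sharp} coefficient requires the density in $C(S^{d-1})$ of the cone of differences of support functions, which is exactly what legitimizes the approximation argument above.
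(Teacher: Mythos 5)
The paper offers no proof of this proposition at all: it simply cites Vitale \cite{vitale} and Saint Pierre \cite{saint-pierre} for the exact value of $\StLip{d}$ and for the estimate \eqref{eq:StLip}. Your argument is therefore necessarily a different route, and it is in substance a correct, self-contained reconstruction of the classical proof from that literature. Each step checks out: (i) since $\delta^*(\cdot,C)$ is convex it is differentiable off a Lebesgue-null set, differentiability at $\ell$ is equivalent to $Y(\ell,C)$ being a singleton, and the gradient there is $y(\ell,C)$; hence $\mathcal T_C$ has full measure and the defining integral equals $\frac{1}{v_d}\int_{B_{\R^d}(0,1)}\nabla\delta^*(x,C)\,dx$; (ii) Gauss--Green (valid for Lipschitz functions, as you note) turns this into $\St(C)=\frac{1}{v_d}\int_{S^{d-1}}\delta^*(u,C)\,u\,d\sigma(u)$; (iii) linearity in the support function together with $d_\Haus(A,B)=\sup_{\norm{u}=1}\abs{\delta^*(u,A)-\delta^*(u,B)}$ gives the Lipschitz bound with constant $v_d^{-1}\int_{S^{d-1}}\abs{\langle u,\ell\rangle}\,d\sigma(u)=2v_{d-1}/v_d$; (iv) the sharpness construction (smooth approximation of $\mathrm{sign}\langle\cdot,\ell\rangle$, convexified by adding a large multiple of $\norm{\cdot}$, so that the approximant is a difference of two support functions with Hausdorff distance at most $1$ between the bodies) is the right one, and it is genuinely needed: the strict lower bound in \eqref{eq:StLip} is a statement about the \emph{sharp} constant, so an upper Lipschitz estimate alone could never yield it; (v) the slicing/Beta computation gives $\StLip{d}=\frac{2\Gamma(d/2+1)}{\sqrt{\pi}\,\Gamma((d+1)/2)}$, and with $x=d/2$ the two inequalities of \eqref{eq:StLip} are exactly $\sqrt{x}<\Gamma(x+1)/\Gamma(x+1/2)<\sqrt{x+1/2}$, which follow from strict log-convexity of $\Gamma$ applied at the midpoints of $(x,x+1)$ and of $(x+1/2,x+3/2)$, precisely as you rearrange them. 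Two minor points to tidy: the slicing formula presupposes $d\geqslant 2$ (for $d=1$ one checks directly that $\St$ is the midpoint map and $\StLip{1}=1$, consistent with your closed form), and in step (iv) it should be said explicitly that the approximants have supremum norm at most $1$ on the sphere, so that the denominator $d_\Haus(A,B)$ stays bounded by $1$ while the numerator tends to $\StLip{d}$ --- which is what your phrase ``uniformly bounded supremum norm'' is implicitly doing. Compared with the paper, which buys brevity by outsourcing the result, your proof makes the paper self-contained at the cost of invoking standard facts (a.e.\ differentiability of convex functions, Gauss--Green for Lipschitz functions, density of differences of support functions in $C(S^{d-1})$, log-convexity of $\Gamma$), all of which are legitimate and correctly deployed.
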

See \cite{vitale,saint-pierre} for the exact calculation of
$\StLip{d}$.
The estimation \eqref{eq:StLip} can be found in \cite{saint-pierre}.

The Steiner point can be generalized by replacing the probability
measure $dx/v_d$ in Definition \ref{def:Steiner point} 
by any element of $\mathcal M$. This is particularly
interesting in connection with the Demyanov distance.
%


%
\begin{definition}[generalized Steiner selection]
The \emph{Generalized Steiner point} of $C\in\cc(\R^d)$, for a measure
$\mu\in\mathcal M$, is defined by
\begin{displaymath}
\St_\mu(C)
:=\int_{B_{\R^d}(0,1)}\St(Y(x,C))\mu(dx).
\end{displaymath}
The \emph{generalized Steiner selection} of a multifunction
$F : [0,T]\rightarrow\cc(\R^d)$ with respect to
a measure $\mu\in\mathcal M$, is
the map
$t\in [0,T]\mapsto\St_\mu(F(t))$.
\end{definition}
%


%
\begin{proposition}\label{GS_points_convexity}
For all $C,C_1,C_2\in\cc(\R^d)$, $\mu_1,\mu_2,\mu\in\mathcal M$,
$\kappa\in [0,1]$, $\lambda,\nu\geqslant 0$, we have
\begin{align*}
 \St_{\kappa\mu_1 + (1 -\kappa)\mu_2}(C) &=
 \kappa\St_{\mu_1}(C) +(1 -\kappa)\St_{\mu_2}(C),\\
 \St_{\mu}(\lambda C_1+\nu C_2) &= 
 \lambda\St_{\mu}(C_1)+\nu \St_{\mu}(C_2).
\end{align*}
\end{proposition}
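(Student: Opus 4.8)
The plan is to handle the two identities separately, reducing each to elementary properties of the ordinary Steiner point $\St$ and of the exposed faces $Y(\cdot,C)$.

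For the first identity (linearity in the measure) I would begin by observing that $\mathcal M$ is convex: if $\mu_i=\theta_i\,dx$ with $\theta_i\in\C^1(B_{\R^d}(0,1);\R)$ a probability density, then $\kappa\mu_1+(1-\kappa)\mu_2=\bigl(\kappa\theta_1+(1-\kappa)\theta_2\bigr)\,dx$ is again a probability measure with $\C^1$ density, hence lies in $\mathcal M$, so $\St_{\kappa\mu_1+(1-\kappa)\mu_2}(C)$ is well defined. The claimed identity is then immediate from the linearity of the integral $\int_{B_{\R^d}(0,1)}\St(Y(x,C))\,\cdot\,(dx)$ with respect to the integrating measure.

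For the second identity (Minkowski additivity and positive homogeneity in the set variable) the key ingredient is the face identity
\[
Y(x,\lambda C_1+\nu C_2)=\lambda\,Y(x,C_1)+\nu\,Y(x,C_2)\qquad(\lambda,\nu\geqslant 0),
\]
which follows from the additivity and positive homogeneity of the support function $\delta^*(x,\lambda C_1+\nu C_2)=\lambda\delta^*(x,C_1)+\nu\delta^*(x,C_2)$: writing an arbitrary $c\in\lambda C_1+\nu C_2$ as $c=\lambda c_1+\nu c_2$, one sees that $\langle x,c\rangle=\delta^*(x,\lambda C_1+\nu C_2)$ holds exactly when $\langle x,c_i\rangle=\delta^*(x,C_i)$ for each index carrying a positive coefficient, which is precisely the stated set equality. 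Granting in addition that the ordinary Steiner point is additive and positively homogeneous, i.e. $\St(\lambda A+\nu B)=\lambda\St(A)+\nu\St(B)$ for all convex compact $A,B$, I would then plug the face identity into the definition, $\St_\mu(\lambda C_1+\nu C_2)=\int\St\bigl(\lambda Y(x,C_1)+\nu Y(x,C_2)\bigr)\mu(dx)$, apply additivity of $\St$ under the integral, and conclude by linearity that this equals $\lambda\St_\mu(C_1)+\nu\St_\mu(C_2)$.

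The genuine work therefore lies in the additivity of the ordinary Steiner point, which I would extract from Definition \ref{def:Steiner point}. On the set $\mathcal T_A\cap\mathcal T_B$ the faces $Y(x,A)$ and $Y(x,B)$ are singletons, so by the face identity $Y(x,\lambda A+\nu B)$ is a singleton with $y(x,\lambda A+\nu B)=\lambda y(x,A)+\nu y(x,B)$; in particular $\mathcal T_A\cap\mathcal T_B\subseteq\mathcal T_{\lambda A+\nu B}$. Since $\delta^*(\cdot,C)$ is convex and finite, it is differentiable Lebesgue-a.e., so the complement of each $\mathcal T_C$ in $B_{\R^d}(0,1)$ is negligible; integrating $y(x,\cdot)$ against $dx/v_d$ over the common full-measure set $\mathcal T_A\cap\mathcal T_B\cap\mathcal T_{\lambda A+\nu B}$ then yields the homogeneity and additivity of $\St$. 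The only delicate point is precisely this null-set bookkeeping — one must verify that the three exceptional direction sets $\mathcal T_A^{\,c}$, $\mathcal T_B^{\,c}$ and $\mathcal T_{\lambda A+\nu B}^{\,c}$ are simultaneously Lebesgue-negligible so that the integral representations are unaffected; everything else is routine linearity.
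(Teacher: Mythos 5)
Your proof is correct, but it takes a genuinely different route from the paper, for the simple reason that the paper gives no argument at all: it disposes of Proposition \ref{GS_points_convexity} by citing Baier and Farkhi \cite[Lemma 4.1]{BF07}. Your derivation is self-contained and sound: (i) convexity of $\mathcal M$ plus linearity of the integral in the integrating measure gives the first identity; (ii) the exact face identity $Y(x,\lambda C_1+\nu C_2)=\lambda Y(x,C_1)+\nu Y(x,C_2)$, valid for \emph{every} $x$ by additivity and positive homogeneity of support functions, reduces the second identity to Minkowski additivity and positive homogeneity of the classical Steiner point; (iii) that additivity follows from integrating $y(x,\cdot)$ over the common full-measure set of exposing directions. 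One step deserves to be made explicit: your claim that the complement of $\mathcal T_C$ is Lebesgue-negligible rests on the standard convex-analysis facts that the subdifferential of $\delta^*(\cdot,C)$ at $x$ is precisely the face $Y(x,C)$, and that a finite convex function is differentiable at $x$ exactly when this subdifferential is a singleton; without this identification, ``$\delta^*(\cdot,C)$ is differentiable a.e.'' does not by itself say anything about $\mathcal T_C$. Granting that (entirely standard) fact, the null-set bookkeeping you flag is indeed harmless, since a finite union of Lebesgue-null sets is null and the integrands are bounded by $\norm{C}_{d_\Haus}$. What the paper's citation buys is brevity; what your argument buys is self-containedness, and it makes visible that the passage from $\St$ to $\St_\mu$ is purely pointwise in $x$ (the null sets enter only in the additivity of $\St$ itself), which is essentially the same mechanism as in the cited lemma of Baier and Farkhi, here worked out from first principles.
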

See Baier and Farkhi \cite[Lemma 4.1]{BF07} for a proof. 
%


%
\begin{theorem}\label{Castaing_representation}
(Castaing representation) For every measurable multifunction $F : [0,T]\rightarrow\cc(\R^d)$, there exists a sequence $(\mu_n)_{n\in\mathbb N}$ of elements of $\mathcal M$ such that, for every $t\in [0,T]$,
\begin{displaymath}
F(t) =\overline{\bigcup_{n\in\mathbb N}\{\St_{\mu_n}(F(t))\}}.
\end{displaymath}
\end{theorem}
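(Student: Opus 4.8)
The plan is to prove the two inclusions pointwise, the nontrivial content being the density of the generalized Steiner points in $C$, together with a separability argument producing a single countable family $(\mu_n)$ that works for every $t$.

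First, I would record the easy inclusion. For any $\mu\in\mathcal M$ and any $t$, each face $Y(x,F(t))=\{c\in F(t):\langle x,c\rangle=\delta^*(x,F(t))\}$ is a nonempty convex compact set, so $\St(Y(x,F(t)))\in Y(x,F(t))\subseteq F(t)$ by the fact that $\St(C)\in C$. Since $F(t)$ is convex and compact and $\St_\mu(F(t))=\int_{B_{\R^d}(0,1)}\St(Y(x,F(t)))\,\mu(dx)$ is a $\mu$-average of points of $F(t)$, we get $\St_\mu(F(t))\in F(t)$. Consequently, for any countable family $(\mu_n)\subseteq\mathcal M$ one has $\overline{\bigcup_n\{\St_{\mu_n}(F(t))\}}\subseteq F(t)$, because $F(t)$ is closed.

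Next comes the heart of the argument: for a fixed $C\in\cc(\R^d)$, the set $G(C):=\{\St_\mu(C):\mu\in\mathcal M\}$ is dense in $C$. By the first identity of Proposition \ref{GS_points_convexity} the map $\mu\mapsto\St_\mu(C)$ is affine, and $\mathcal M$ is convex (a convex combination of $\C^1$ probability densities is again one), so $G(C)$ is convex and $\overline{G(C)}$ is a closed convex subset of $C$. I would then show that every exposed point of $C$ lies in $\overline{G(C)}$: if $Y(\ell,C)=\{y(\ell,C)\}$ is a singleton, choosing measures $\mu_\varepsilon\in\mathcal M$ concentrated on a shrinking neighbourhood of $\ell/\norm{\ell}$ and using the upper semicontinuity of $x\mapsto Y(x,C)$ at $\ell$ (where it is single-valued) gives $\St(Y(x,C))\to y(\ell,C)$ for $\mu_\varepsilon$-almost every $x$, whence $\St_{\mu_\varepsilon}(C)\to y(\ell,C)$. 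Thus $\overline{G(C)}$ is closed, convex, and contains every exposed point; by Straszewicz's theorem the exposed points are dense in the extreme points of $C$, and by the Krein--Milman theorem $C$ is the closed convex hull of its extreme points, so $\overline{G(C)}=C$.

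Finally, I would produce a single countable family. The estimate $\norm{\St_\mu(C)-\St_\nu(C)}\leqslant\norm{C}_{d_\Haus}\,\norm{\theta_\mu-\theta_\nu}_{L^1}$, valid because $x\mapsto\St(Y(x,C))$ is bounded by $\norm{C}_{d_\Haus}$, shows that $\mu\mapsto\St_\mu(C)$ is continuous for $L^1$-convergence of densities, with the crucial feature that the rate is uniform in $C$. Since $L^1(B_{\R^d}(0,1))$ is separable, I may fix once and for all a countable set $(\mu_n)_{n\in\mathbb N}\subseteq\mathcal M$ whose densities are $L^1$-dense in those of $\mathcal M$. Then, for every $t$ and every $\mu\in\mathcal M$, approximating $\theta_\mu$ in $L^1$ by densities $\theta_{\mu_{n_k}}$ yields $\St_{\mu_{n_k}}(F(t))\to\St_\mu(F(t))$, so $\overline{\bigcup_n\{\St_{\mu_n}(F(t))\}}\supseteq\overline{G(F(t))}=F(t)$ by the previous step; combined with the easy inclusion this gives the claimed equality for all $t$, and measurability of each selection $t\mapsto\St_{\mu_n}(F(t))$ follows from the measurability of $F$ and the continuity of $\St_{\mu_n}$ on $\cc(\R^d)$. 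The main obstacle is the density step: obtaining exposed points as limits of generalized Steiner points requires the upper semicontinuity of the face map $Y(\cdot,C)$, and the passage from exposed points to all of $C$ rests on combining convexity of $G(C)$ with Straszewicz's and Krein--Milman's theorems; by contrast, once the uniform-in-$C$ $L^1$ estimate is in hand, the reduction to a single countable family is routine and, notably, the set equality already holds pointwise, without invoking the measurability of $F$.
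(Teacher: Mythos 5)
Your argument is correct, but there is nothing in the paper to compare it with step by step: the paper proves Theorem \ref{Castaing_representation} by citation only, referring to Dentcheva \cite[Theorem 3.4]{DENTCHEVA00}. What you have produced is, in effect, a self-contained reconstruction of that cited result, and each of its three steps holds up: the inclusion $\St_\mu(F(t))\in F(t)$ (each $\St(Y(x,F(t)))$ lies in the face $Y(x,F(t))\subseteq F(t)$, and a $\mu$-average of points of a convex compact set remains in it); the density of $\{\St_\mu(C)\tq\mu\in\mathcal M\}$ in $C$, obtained by combining the convexity of this set (affinity in $\mu$ from Proposition \ref{GS_points_convexity} plus convexity of $\mathcal M$), the upper semicontinuity of the face map $x\mapsto Y(x,C)$ at directions where it is single-valued --- so that $\C^1$ densities concentrated in a small cone around an exposing direction give generalized Steiner points converging to the exposed point --- and Straszewicz's theorem together with Minkowski's (Krein--Milman's) theorem; and finally the extraction of one countable family $(\mu_n)$ from $L^1$-separability and the bound $\norm{\St_\mu(C)-\St_\nu(C)}\leqslant\norm{C}_{d_\Haus}\norm{\theta_\mu-\theta_\nu}_{L^1}$. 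Two minor remarks. First, your claim that this last estimate is ``uniform in $C$'' is an overstatement: the constant $\norm{C}_{d_\Haus}$ depends on $C$ and is unbounded over $\cc(\R^d)$; but no uniformity is actually needed, since for each fixed $t$ the constant $\norm{F(t)}_{d_\Haus}$ is finite, and the essential point is only that $(\mu_n)$ is chosen independently of $t$. Second, you rightly note that the asserted set equality holds pointwise without any measurability of $F$; measurability enters only to make $t\mapsto\St_{\mu_n}(F(t))$ measurable selections (the ``Castaing representation'' aspect proper), and there you should invoke the standard fact that a measurable multifunction with values in $\cc(\R^d)$ is Borel measurable as a map into $(\cc(\R^d),d_\Haus)$, after which the Lipschitz continuity of $\St_{\mu_n}$ (quoted in the paper from \cite{DENTCHEVA00}) concludes. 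What the paper's choice buys is brevity; what your route buys is a proof that is self-contained modulo two classical convexity theorems, and it makes explicit where the smoothness and the concentration of the measures in $\mathcal M$ are really used.
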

See Dentcheva \cite[Theorem 3.4]{DENTCHEVA00} for a proof.

It is proved in \cite{DENTCHEVA00} that, for any $\mu\in\mathcal M$, 
the map $\St_{\mu} :\,\cc(\R^d)\mapsto \R^d$ is Lipschitz for
the Hausdorff distance $d_\Haus$. However there is no uniform bound
on the Lipschitz coefficient with respect to $\mu$.
But the Demyanov distance $d_\D$ can be expressed using generalized
Steiner points: 
%
 

%
\begin{proposition}[Demyanov distance and generalized Steiner points]
\label{Demyanov_Steiner}
For every $C_1,C_2\in\cc(\mathbb R^d)$,
\begin{displaymath}
d_\D(C_1,C_2) =
\sup_{\mu\in\mathcal M}\|\St_{\mu}(C_2) -\St_{\mu}(C_2)\|.
\end{displaymath}
\end{proposition}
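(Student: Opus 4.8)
The plan is to prove the two inequalities separately; note first that the right-hand side is meant to read $\sup_{\mu\in\mathcal M}\norm{\St_\mu(C_1) -\St_\mu(C_2)}$ (there is an evident typo in the statement), which is what I establish. The decisive preliminary remark is that, setting $g_i(x) :=\St(Y(x,C_i))$ for $i\in\{1,2\}$ and $x\in B_{\R^d}(0,1)$, one has $g_i(x) = y(x,C_i)$ for every exposing direction $x\in\mathcal T_{C_i}$: indeed the face $Y(x,C_i)$ is then the singleton $\{y(x,C_i)\}$, whose Steiner point equals itself. Moreover $\mathcal T_{C_i}$ has full Lebesgue measure, since its complement is exactly the set of directions at which the (finite, convex) support function $\delta^*(\cdot,C_i)$ fails to be differentiable, and a convex function is differentiable almost everywhere. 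As every $\mu\in\mathcal M$ is absolutely continuous, it follows that $g_i = y(\cdot,C_i)$ holds $\mu$-almost everywhere.

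For the upper bound I would write $\St_\mu(C_1) -\St_\mu(C_2) =\int_{B_{\R^d}(0,1)}\CCO{g_1(x) - g_2(x)}\,\mu(dx)$ and apply the triangle inequality for the Bochner integral. For $\mu$-almost every $x$ the point $x$ lies in $\mathcal T_{C_1}\cap\mathcal T_{C_2}$, so $\norm{g_1(x) - g_2(x)} =\norm{y(x,C_1) - y(x,C_2)}\leqslant d_\D(C_1,C_2)$ directly from the definition of the Demyanov distance. Integrating against the probability measure $\mu$ gives $\norm{\St_\mu(C_1) -\St_\mu(C_2)}\leqslant d_\D(C_1,C_2)$, whence $\sup_{\mu\in\mathcal M}\norm{\St_\mu(C_1) -\St_\mu(C_2)}\leqslant d_\D(C_1,C_2)$.

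For the reverse inequality, fix $\varepsilon > 0$ and choose a unit vector $\ell_0\in\mathcal T_{C_1}\cap\mathcal T_{C_2}$ with $\norm{y(\ell_0,C_1) - y(\ell_0,C_2)} > d_\D(C_1,C_2) -\varepsilon$. Since exposing directions and exposed points are invariant under positive scaling, the interior point $x_0 :=\tfrac{1}{2}\ell_0$ satisfies $y(x_0,C_i) = y(\ell_0,C_i)$. I would then take measures $\mu_n\in\mathcal M$ whose $\C^1$ densities are bump functions supported on balls $B_{\R^d}(x_0,1/n)$ shrinking to $x_0$, and show that $\St_{\mu_n}(C_i) =\int y(\cdot,C_i)\,d\mu_n\to y(x_0,C_i)$. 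Consequently $\norm{\St_{\mu_n}(C_1) -\St_{\mu_n}(C_2)}\to\norm{y(\ell_0,C_1) - y(\ell_0,C_2)} > d_\D(C_1,C_2) -\varepsilon$, so the supremum is at least $d_\D(C_1,C_2) -\varepsilon$; letting $\varepsilon\downarrow 0$ concludes.

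The main obstacle is precisely the convergence $\St_{\mu_n}(C_i)\to y(x_0,C_i)$. Because $y(\cdot,C_i)$ is single-valued only on the full-measure set $\mathcal T_{C_i}$, ordinary continuity is unavailable; the right tool is that $y(\cdot,C_i)$ coincides with $\nabla\delta^*(\cdot,C_i)$ wherever the gradient exists, and the gradient of a convex function is continuous relative to its set of differentiability points. Thus the oscillation of $y(\cdot,C_i)$ over $B_{\R^d}(x_0,1/n)\cap\mathcal T_{C_i}$ tends to $0$, which controls $\int\norm{y(x,C_i) - y(x_0,C_i)}\,\mu_n(dx)$ and yields the desired limit. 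Everything else reduces to the almost-everywhere identification $g_i = y(\cdot,C_i)$ together with the choice of concentrating densities.
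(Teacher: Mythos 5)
Your proof is correct; note, however, that it cannot be set against the paper's own argument line by line, because the paper gives none: Proposition \ref{Demyanov_Steiner} is simply quoted from Baier and Farkhi \cite[Corollary 4.8]{BF07}. Your blind reconstruction follows what is essentially the standard route to this identity, and both halves are sound. For the upper bound: the face $Y(\ell,C)$ is the subdifferential of the finite convex function $\delta^*(\cdot,C)$ at $\ell$, so $\mathcal T_C$ is precisely its set of differentiability points and has full Lebesgue measure; since every $\mu\in\mathcal M$ is absolutely continuous, $\St_\mu(C_i)=\int y(x,C_i)\,\mu(dx)$, and integrating the pointwise bound $\norm{y(x,C_1)-y(x,C_2)}\leqslant d_\D(C_1,C_2)$ over $\mathcal T_{C_1}\cap\mathcal T_{C_2}$ gives the inequality. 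For the lower bound: the reduction to a unit direction $\ell_0$ and to the interior point $x_0=\ell_0/2$ correctly exploits the positive $0$-homogeneity of $\ell\mapsto y(\ell,C_i)$, the $C^1$ (indeed $C^\infty$) bumps shrinking to $x_0$ do belong to $\mathcal M$ for $n$ large, and the convergence $\St_{\mu_n}(C_i)\to y(x_0,C_i)$, which you rightly single out as the crux, does follow from the continuity of the gradient of a convex function relative to its differentiability set. A simplification worth noting: since $\St(Y(x,C_i))\in Y(x,C_i)$, and the face map $x\mapsto Y(x,C_i)$ has closed graph and values inside the compact set $C_i$, every selection of it is automatically continuous at each point of $\mathcal T_{C_i}$; this yields the needed oscillation estimate directly, without restricting attention to $\mathcal T_{C_i}$ and without invoking gradient continuity. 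You also correctly repaired the statement's typo ($C_1$ in place of the first $C_2$). In short, where the paper buys brevity by citation, your argument buys self-containedness at the cost of two classical convex-analysis facts, and it is a faithful stand-in for the proof the paper outsources to \cite{BF07}.
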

See Baier and Farkhi \cite[Corollary 4.8]{BF07} for a proof.
%


%
\subsection{Young integral for single valued functions}
This subsection deals with the definition and some basic properties of
Young integral which allow to integrate a map $f\in
\C^{\alpha\textrm{-H\"ol}}([0,T];\matr)$ with respect to $w$ when
$\alpha\in ]0,1[$ and $\alpha +\beta > 1$.
Here, $\matr$ denotes the space of real matrices with $e$ rows and $d$
columns. 

Let us first introduce a compactness result which is
essential in the construction of the Young integral and the proof of
its properties, and
that we use
several times in this paper. 

\begin{proposition}[Compactness in Hölder spaces]\label{prop:compactnessHolder}
Let $(f_n)$ be a bounded sequence in
$\C^{(\alpha)\textrm{-H\"ol}}([0,T];\matr)$
such that $\sup_n\sup_{t\in[0,T]}\norm{f_n(t)}_{\matr}<+\infty$. 
Then there exists a subsequence $(f_{n_k})_{k\in\mathbb N}$ of
$(f_n)_{n\in\mathbb N}$
and $f\in\C^{(\alpha)\textrm{-H\"ol}}([0,T];\allowbreak\matr)$
such that, for every $\varepsilon\in
]0,\alpha]$, $(f_{n_k})_{k\in\mathbb N}$ converges in
$\C^{(\alpha -\varepsilon)\textrm{-H\"ol}}([0,T];\allowbreak\matr)$
to $f$. 
\end{proposition}
See Friz and Victoir \cite[Theorem 5.28]{FV10} for a proof.


%
\begin{theorem}[Young integral]\label{Young_integral}
Consider $\alpha,\beta\in ]0,1]$ such that $\alpha +\beta > 1$, and two maps
\begin{displaymath}
f\in \C^{\alpha\normalfont{\textrm{-H\"ol}}}([0,T];\matr)
\textrm{ and }
w\in \C^{\beta\normalfont{\textrm{-H\"ol}}}([0,T];\mathbb R^d).
\end{displaymath}
For every $n\in\mathbb N^*$ and
$D_n=(t_{1}^{n},\dots,t_{m_n}^{n})\in\mathfrak D_{[0,T]}$
such that $\abs{D_n}\rightarrow 0$
(where $\abs{D_n}=\max_{1\leqslant k\leqslant m_n-1}(t_{k+1}^{n}-t_k^{n})$)
the limit
\begin{displaymath}
\lim_{n\rightarrow\infty}
\sum_{k = 1}^{m_n - 1}
f(t_{k}^{n})
(w(t_{k + 1}^{n}) - w(t_{k}^{n}))
\end{displaymath}
exists and does not depend on the dissection
$D_n$. 
\end{theorem}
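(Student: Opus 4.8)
The plan is to follow Young's classical coarsening argument, controlling how a Riemann sum changes when nodes are removed from a dissection. For a dissection $D = (s_0 < s_1 < \dots < s_p)$ of $[0,T]$ write $S(D) := \sum_{k=0}^{p-1} f(s_k)\CCO{w(s_{k+1}) - w(s_k)}$. The first step is the \emph{one-point removal identity}: if $D'$ is obtained from $D$ by deleting a single interior node $s_j$, a direct rearrangement of the two affected terms gives
\begin{displaymath}
S(D) - S(D') = \CCO{f(s_j) - f(s_{j-1})}\CCO{w(s_{j+1}) - w(s_j)},
\end{displaymath}
whence, using the two H\"older seminorms together with $\abs{s_j - s_{j-1}}^\alpha\abs{s_{j+1}-s_j}^\beta \leqslant \abs{s_{j+1}-s_{j-1}}^{\alpha+\beta}$,
\begin{displaymath}
\norm{S(D) - S(D')} \leqslant \|f\|_{\alpha,T}\,\|w\|_{\beta,T}\,\abs{s_{j+1} - s_{j-1}}^{\alpha+\beta}.
\end{displaymath}

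Second, I would invoke a pigeonhole estimate: in a dissection with $q$ subintervals there is an interior node $s_j$ with $\abs{s_{j+1} - s_{j-1}} \leqslant 2T/(q-1)$, because each elementary interval is counted at most twice in $\sum_j \abs{s_{j+1}-s_{j-1}}$, so this sum is at most $2T$. Deleting such an optimal node repeatedly, one can coarsen any sub-dissection of an interval $[a,b]$ down to $\{a,b\}$; summing the removal bound over the successive stages $q, q-1, \dots, 2$ produces the series $\sum_{q\geqslant 2}(q-1)^{-(\alpha+\beta)} = \zeta(\alpha+\beta)$, which is finite precisely because $\alpha+\beta > 1$. Applying this coarsening \emph{locally}, subinterval by subinterval, inside each $[s_i, s_{i+1}]$ of a dissection $D$ shows that for any refinement $D'$ of $D$,
\begin{displaymath}
\norm{S(D) - S(D')} \leqslant \|f\|_{\alpha,T}\,\|w\|_{\beta,T}\,2^{\alpha+\beta}\,\zeta(\alpha+\beta)\sum_i \abs{s_{i+1}-s_i}^{\alpha+\beta}.
\end{displaymath}
The point of working subinterval by subinterval is that the last sum is at most $T\,\abs{D}^{\alpha+\beta-1}$, which tends to $0$ as $\abs{D}\to 0$ since $\alpha+\beta-1 > 0$.

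Finally, I would upgrade this refinement estimate to the Cauchy property. Given two dissections $D_1, D_2$ with $\max(\abs{D_1},\abs{D_2}) \leqslant \delta$, their common refinement $D_1 \cup D_2$ refines each and still has mesh at most $\delta$, so the previous display, applied twice, yields $\norm{S(D_1) - S(D_2)} \leqslant 2\,\|f\|_{\alpha,T}\,\|w\|_{\beta,T}\,2^{\alpha+\beta}\,\zeta(\alpha+\beta)\,T\,\delta^{\alpha+\beta-1}$. Hence $S(D_n)$ is a Cauchy sequence in $\R^e$ whenever $\abs{D_n}\to 0$, so it converges; and interleaving any two admissible dissection sequences (the merged sequence still has mesh tending to $0$) shows the limit does not depend on the chosen $D_n$. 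The \textbf{main obstacle} is the second step: one must localise the coarsening to each subinterval and exploit the superadditivity $\sum_i \abs{s_{i+1}-s_i}^{\alpha+\beta} \leqslant T\,\abs{D}^{\alpha+\beta-1}$, since the naive global coarsening only yields a uniform bound and not the decay in $\abs{D}$ that the condition $\alpha+\beta>1$ is there to provide.
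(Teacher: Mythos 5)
Your proof is correct, and it is self-contained where the paper is not: the paper recalls this theorem as a classical preliminary and defers its proof to Friz and Victoir \cite[Theorem 6.8]{FV10}, so there is no in-paper argument to compare against line by line. What you wrote is the classical Young coarsening argument, and each step checks out: the one-point removal identity $S(D)-S(D')=(f(s_j)-f(s_{j-1}))(w(s_{j+1})-w(s_j))$; the pigeonhole bound (each elementary interval is counted at most twice, so some interior node satisfies $\abs{s_{j+1}-s_{j-1}}\leqslant 2(b-a)/(q-1)$ --- note that in the local application the correct constant involves the length $b-a$ of the block being coarsened rather than $T$, which is what your final display implicitly uses, so no harm is done); the summability $\sum_{q\geqslant 2}(q-1)^{-(\alpha+\beta)}=\zeta(\alpha+\beta)<\infty$, which holds exactly under the hypothesis $\alpha+\beta>1$; and the localisation giving, for any refinement $D'$ of $D=(s_0<\dots<s_p)$,
\begin{displaymath}
\norm{S(D)-S(D')}\leqslant 2^{\alpha+\beta}\zeta(\alpha+\beta)\norm{f}_{\alpha,T}\norm{w}_{\beta,T}\sum_i\abs{s_{i+1}-s_i}^{\alpha+\beta}\leqslant 2^{\alpha+\beta}\zeta(\alpha+\beta)\norm{f}_{\alpha,T}\norm{w}_{\beta,T}\,T\abs{D}^{\alpha+\beta-1},
\end{displaymath}
after which the common-refinement argument for the Cauchy property and the interleaving argument for independence of the limit are standard and correctly executed. (``Superadditivity'' is a slight misnomer for the elementary inequality $\sum_i\ell_i^{\alpha+\beta}\leqslant(\max_i\ell_i)^{\alpha+\beta-1}\sum_i\ell_i$, but this inequality is what you actually use and it is valid since $\alpha+\beta-1>0$.) One bonus worth noting: applying your refinement estimate to the two-point dissection $D=\{s,t\}$ and letting the mesh of $D'$ tend to $0$ yields
\begin{displaymath}
\norm{\int_s^t f(u)\,dw(u)-f(s)(w(t)-w(s))}\leqslant 2^{\alpha+\beta}\zeta(\alpha+\beta)\norm{f}_{\alpha,T}\norm{w}_{\beta,T}\abs{t-s}^{\alpha+\beta},
\end{displaymath}
i.e.\ the Young--Love estimate of Theorem \ref{Young_Love_estimate} with $\mathfrak c_{\alpha,\beta}=2^{\alpha+\beta}\zeta(\alpha+\beta)$, so your argument in fact recovers both of the preliminaries that the paper imports from \cite{FV10}.
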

%


%
\begin{definition}[Young integral]
The limit in Theorem \ref{Young_integral} is denoted by
\begin{displaymath}
\int_{0}^{T}f(s)\,dw(s)
\end{displaymath}
and called the \emph{Young integral} of $f$ with respect to $w$ on $[0,T]$.
\end{definition}
The following theorem provides a bound on Young's integral which is
crucial in the sequel. 
%


%
\begin{theorem}[Young-Love estimate]\label{Young_Love_estimate}
Consider $\alpha,\beta\in ]0,1[$ such that $\alpha +\beta > 1$, and two maps
\begin{displaymath}
f\in \C^{\alpha\normalfont{\textrm{-H\"ol}}}([0,T];\matr)
\textrm{ and }
w\in \C^{\beta\normalfont{\textrm{-H\"ol}}}([0,T];\mathbb R^d).
\end{displaymath}
There exists a constant $\mathfrak{c}_{\alpha,\beta}\geqslant 1$,
depending only on $\alpha$ and $\beta$, such that,
for all $s,t\in[0,T]$ such that $s<t$,
\begin{displaymath}
\left\|
\int_{s}^{t}f(u)\,dw(u) - f(s)(w(t) - w(s))\right\|
\leqslant
\mathfrak c_{\alpha,\beta}\|w\|_{\beta,T}
\|f\|_{\alpha,T}|t - s|^{\alpha +\beta}.
\end{displaymath}
Therefore, for all $s,t\in[0,T]$,
\begin{displaymath}
\left\|
\int_{s}^{t}f(u)\,dw(u)\right\|
\leqslant
\mathfrak c_{\alpha,\beta}\|w\|_{\beta,T}(
\|f\|_{\alpha,T}T^{\alpha} +\|f\|_{\infty,T})|t - s|^{\beta},
\end{displaymath}
and, in particular,
\begin{equation*}
\norm{\int_{0}^{.}f(s)\,dw(s)}_{\beta,T}
\leqslant\Cte_{\alpha,\beta}\norm{w}_{\beta,T}N_{\alpha,T}(f)
\end{equation*}
with $\Cte_{\alpha,\beta,T} =\mathfrak c_{\alpha,\beta}(T^{\alpha}\vee 1)$.
\end{theorem}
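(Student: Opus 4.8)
The plan is to establish the first (local) estimate directly from the definition of the Young integral as a limit of Riemann sums, since the two subsequent inequalities will follow from it by routine manipulations. The guiding observation is that $f(s)(w(t)-w(s))$ is exactly the Riemann sum associated with the trivial dissection $\{s,t\}$ of $[s,t]$, so the left-hand side measures how far a general Riemann sum can drift from this ``germ'' as the dissection is refined. I would therefore fix an arbitrary dissection $\pi=\{s=r_0<r_1<\dots<r_N=t\}\in\mathfrak D_{[s,t]}$, write $S(\pi)=\sum_{i=0}^{N-1}f(r_i)(w(r_{i+1})-w(r_i))$, and aim to bound $\|S(\pi)-f(s)(w(t)-w(s))\|$ by a constant times $\|f\|_{\alpha,T}\|w\|_{\beta,T}(t-s)^{\alpha+\beta}$, uniformly in $\pi$.

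The core of the argument, and the step I expect to be the main obstacle, is the coarse-graining (point-removal) scheme. Deleting an interior point $r_j$ changes the Riemann sum by exactly $(f(r_{j-1})-f(r_j))(w(r_{j+1})-w(r_j))$, whose norm is at most $\|f\|_{\alpha,T}\|w\|_{\beta,T}(r_{j+1}-r_{j-1})^{\alpha+\beta}$. To keep these increments summable, I would at each stage select, by a pigeonhole/averaging argument, an interior point whose two adjacent subintervals are short: since $\sum_j (r_{j+1}-r_{j-1})\leqslant 2(t-s)$, a partition with $k$ intervals admits an interior point with $r_{j+1}-r_{j-1}\leqslant 2(t-s)/(k-1)$. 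Removing points one at a time from $N$ intervals down to the single interval $\{s,t\}$ then yields a bound dominated by $\|f\|_{\alpha,T}\|w\|_{\beta,T}(2(t-s))^{\alpha+\beta}\sum_{k\geqslant1}k^{-(\alpha+\beta)}$. This is precisely where the hypothesis $\alpha+\beta>1$ is indispensable: it makes the series $\sum_k k^{-(\alpha+\beta)}=\zeta(\alpha+\beta)$ converge, yielding a finite constant $\mathfrak c_{\alpha,\beta}=2^{\alpha+\beta}\zeta(\alpha+\beta)\geqslant 1$ independent of $\pi$. Since this bound holds for every dissection, letting $|\pi|\to0$ and invoking Theorem \ref{Young_integral} transfers it to the integral itself.

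With the local estimate in hand, the second inequality follows by the triangle inequality: $\|\int_s^t f\,dw\|$ is at most the local-estimate bound plus $\|f(s)\|\,\|w(t)-w(s)\|\leqslant\|f\|_{\infty,T}\|w\|_{\beta,T}(t-s)^\beta$, and writing $(t-s)^{\alpha+\beta}\leqslant T^\alpha(t-s)^\beta$ together with $\mathfrak c_{\alpha,\beta}\geqslant1$ lets me factor out $\mathfrak c_{\alpha,\beta}\|w\|_{\beta,T}(t-s)^\beta$ and collect the two terms into $\|f\|_{\alpha,T}T^\alpha+\|f\|_{\infty,T}$. Finally, the seminorm bound uses additivity of the Young integral, $\int_0^t-\int_0^s=\int_s^t$, so that dividing the second inequality by $(t-s)^\beta$ and taking the supremum over $s<t$ gives the $\beta$-H\"older seminorm; the elementary inequality $\|f\|_{\alpha,T}T^\alpha+\|f\|_{\infty,T}\leqslant(T^\alpha\vee1)N_{\alpha,T}(f)$ then reshapes the constant into $\Cte_{\alpha,\beta,T}=\mathfrak c_{\alpha,\beta}(T^\alpha\vee1)$, completing the proof.
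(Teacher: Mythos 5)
Your proposal is correct, but it cannot be compared step-by-step with the paper's proof for the simple reason that the paper does not prove this theorem at all: it quotes it from Friz and Victoir \cite{FV10} (their Theorem 6.8). What you supply is the classical Young point-removal (coarse-graining) argument, which is essentially the argument behind the cited result, so in that sense your route is the expected one, just made self-contained. The key steps all check out: deleting an interior point $r_j$ changes the Riemann sum by $(f(r_j)-f(r_{j-1}))(w(r_{j+1})-w(r_j))$ (your sign is flipped, harmlessly), whose norm is at most $\|f\|_{\alpha,T}\|w\|_{\beta,T}(r_j-r_{j-1})^{\alpha}(r_{j+1}-r_j)^{\beta}\leqslant \|f\|_{\alpha,T}\|w\|_{\beta,T}(r_{j+1}-r_{j-1})^{\alpha+\beta}$; the pigeonhole selection of a point with $r_{j+1}-r_{j-1}\leqslant 2(t-s)/(k-1)$ is valid because each subinterval appears at most twice in $\sum_j(r_{j+1}-r_{j-1})$; and the total cost of collapsing a dissection of $[s,t]$ down to $\{s,t\}$ is bounded by $2^{\alpha+\beta}\zeta(\alpha+\beta)\|f\|_{\alpha,T}\|w\|_{\beta,T}(t-s)^{\alpha+\beta}$, uniformly in the dissection, which is finite precisely because $\alpha+\beta>1$. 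This uniform bound passes to the integral by Theorem \ref{Young_integral} applied on $[s,t]$. Your derivation of the two global estimates from the local one (triangle inequality, $(t-s)^{\alpha}\leqslant T^{\alpha}$, additivity $\int_0^t-\int_0^s=\int_s^t$, and $\|f\|_{\alpha,T}T^{\alpha}+\|f\|_{\infty,T}\leqslant (T^{\alpha}\vee 1)N_{\alpha,T}(f)$) is likewise correct. What your approach buys, compared with the paper's bare citation, is a self-contained proof with an explicit admissible constant $\mathfrak c_{\alpha,\beta}=2^{\alpha+\beta}\zeta(\alpha+\beta)\geqslant 1$; the price is merely re-deriving a textbook estimate.
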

See Friz and Victoir \cite[Theorem 6.8]{FV10} for a proof.
%


%
\section{Set-valued Young integral}\label{section_set_valued_Young_integral}
This section deals with an Aumann-like integral based on a special
subset of selections.
We assume the following hypothesis:
\begin{itemize}
 \item[\HA]
 $F\in \C^{\alpha\textrm{-H\"ol}}([0,T];\cc(\matr))$,
 and $w\in \C^{\beta\textrm{-H\"ol}}([0,T];\mathbb R^d)$, with
 $\alpha,\beta\in]0,1]$, $\alpha +\beta>1$.
\end{itemize}
We shall sometimes consider the subcase obtained by adding
the following stronger assumption on $F$: 
\begin{itemize}
\item[\HB]
$F\in \C^{\alpha\textrm{-H\"ol}}_\Dem([0,T];\cc(\matr))$.
\end{itemize}
To show that {\HB} is not contained in {\HA}, we can consider the case when
$F(t)$ is the segment of $\R^2$ with one end at $(0;0)$ and the other
end at $(\sin t, \cos t)$ (see \cite[Example 3.3]{rzezuchowski}). Then
$d_\D(F(t),F(s))\geqslant 1$ for $s\not=t$,
thus $N_{\alpha,T,\Dem}(F)=+\infty$ for all $\alpha$, whereas $F$ is
Lipschitz for $d_\Haus$.
%


%
\subsection{Special selections}
We now define an appropriate set of selections of $F$, that will be
used to define a set-valued Young integral of $F$ with respect to
$w$.

Let us choose our "tuning
parameter'' $r$ such that
\begin{equation}
\label{eq:tuning}
r\geqslant \StLip{ed}\|F\|_{\alpha,T},
\end{equation}
where $\StLip{ed}$ is the constant
defined in Proposition \ref{prop:steinerlip}.
In the case when {\HB} is satisfied, we can alternatively take
\begin{equation}
\label{eq:tuning2}
r\geqslant \|F\|_{\alpha,T,\Dem}.
\end{equation}
Note that Condition
\eqref{eq:tuning2} can be less restrictive than \eqref{eq:tuning},
when {\HB} is satisfied.
For example, if $F(t)$ has the form $f(t)+C$, where $f$ is
single-valued and $C\in\cc(\R^d)$ is constant, we have
$\norm{F}_{\alpha,T,\Dem}=\norm{F}_{\alpha,T}\leqslant \StLip{ed}\|F\|_{\alpha,T}$.
\begin{notation}
In this section, we denote
\begin{equation}
\label{eq:rmin}
\rmin=\min\CCO{ \StLip{ed}\|F\|_{\alpha,T},
\norm{F}_{\alpha,T,\Dem} }.
\end{equation}
\end{notation}
\begin{notation}
We denote by $\mathcal S^0(F)$ the set of all measurable selections
of $F$,
\begin{align*}
 \mathcal S_{\alpha,r}(F)
 & :=\{f\in S^0(F) \tq \norm{f}_{\alpha,T}\leqslant r\}
 \textrm{ and}\\
 \mathcal S_{\St}(F)
 &:=
 \{\St_{\mu}(F(.))\tq \mu\in\mathcal M\}\subset
 \mathcal S^0(F).
\end{align*}
\end{notation}        
\begin{remark}[Dependence on $T$ of $\mathcal S_{\alpha,r}(F)$]
  \label{rem:Salpha-dependT}
  Let $t\in [0,T[$. 
  If $f$ is an $\alpha$-Hölder selection of $F$ on $[0,t]$,
  with  $\norm{f}_{\alpha,t}\leqslant r$, there does not necesarily
  exists a selection $\overline{f}\in\mathcal S_{\alpha,r}(F)$ wich extends
  $f$ on $[0,T]$ and such that $\norm{\overline{f}}_{\alpha,T}\leqslant r$. 
  So, $\mathcal S_{\alpha,r}(F)$ depends on $T$, but we chose to keep
  the relatively light notation $\mathcal S_{\alpha,r}(F)$
  without stressing this fact.

\end{remark}

%


%
\begin{proposition}\label{prop:basic}
For every $r\geqslant \rmin$,
the set $\mathcal S_{\alpha,r}(F)$ is nonempty and
convex.
\end{proposition}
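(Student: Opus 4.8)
The plan is to treat convexity and nonemptyness separately. Convexity should be immediate, so the real content lies in nonemptyness, which I would establish by exhibiting explicit Steiner-type selections whose $\alpha$-Hölder seminorm is controlled by $\rmin$.

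For convexity, I would take two selections $f_0,f_1\in\mathcal S_{\alpha,r}(F)$ and a weight $\kappa\in[0,1]$, and argue that $f_\kappa:=\kappa f_0+(1-\kappa)f_1$ again belongs to $\mathcal S_{\alpha,r}(F)$. Membership as a selection uses that each value $F(t)$ is convex, so that $f_\kappa(t)\in F(t)$ for every $t$; measurability is preserved under convex combinations of measurable maps. The seminorm bound then follows from the subadditivity and positive homogeneity of $\norm{\cdot}_{\alpha,T}$, since $\norm{f_\kappa}_{\alpha,T}\leqslant\kappa\norm{f_0}_{\alpha,T}+(1-\kappa)\norm{f_1}_{\alpha,T}\leqslant r$.

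For nonemptyness, I would produce a selection according to which term realizes $\rmin=\min(\StLip{ed}\norm{F}_{\alpha,T},\norm{F}_{\alpha,T,\Dem})$. When $r\geqslant\StLip{ed}\norm{F}_{\alpha,T}$, I would take the ordinary Steiner selection $t\mapsto\St(F(t))$, which is a genuine selection since $\St(C)\in C$ for every $C$, and which is continuous (hence measurable) because $\St$ is Lipschitz and $F$ is Hölder; its seminorm is bounded through the Lipschitz estimate of Proposition \ref{prop:steinerlip}, giving $\norm{\St(F(\cdot))}_{\alpha,T}\leqslant\StLip{ed}\norm{F}_{\alpha,T}\leqslant r$. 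When instead $r\geqslant\norm{F}_{\alpha,T,\Dem}$ (which forces hypothesis \HB{} for this bound to be finite), I would fix any $\mu\in\mathcal M$ and use the generalized Steiner selection $t\mapsto\St_\mu(F(t))$; by Proposition \ref{Demyanov_Steiner}, for all $s,t$ one has $\norm{\St_\mu(F(t))-\St_\mu(F(s))}\leqslant d_\D(F(t),F(s))$, whence $\norm{\St_\mu(F(\cdot))}_{\alpha,T}\leqslant\norm{F}_{\alpha,T,\Dem}\leqslant r$. Since $r\geqslant\rmin$ forces at least one of the two inequalities, $\mathcal S_{\alpha,r}(F)$ is nonempty in every case.

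The only mild subtlety — hardly a genuine obstacle — is that the Lipschitz bound of Proposition \ref{prop:steinerlip} is specific to the ordinary Steiner point, whereas the generalized Steiner points $\St_\mu$ admit no uniform Hausdorff-Lipschitz constant. The Demyanov distance is precisely the tool that repairs this defect, which is why the two branches of $\rmin$ must invoke two different propositions; beyond recalling these facts, no step of the argument requires real work.
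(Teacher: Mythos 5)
Your proof is correct and follows essentially the same route as the paper: convexity from convexity of the values $F(t)$ together with convexity of the H\"older seminorm, and nonemptyness by exhibiting the ordinary Steiner selection (bounded via Proposition \ref{prop:steinerlip}) when $r\geqslant\StLip{ed}\norm{F}_{\alpha,T}$, or a generalized Steiner selection (bounded via Proposition \ref{Demyanov_Steiner}) when {\HB} holds and $r\geqslant\norm{F}_{\alpha,T,\Dem}$. Your write-up is in fact slightly more explicit than the paper's, in particular on the case distinction forced by $\rmin$ and on why no uniform Lipschitz constant for $\St_\mu$ is needed.
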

%


%
\begin{proof}
The convexity of $\mathcal S_{\alpha,r}(F)$ stems from the convexity
of the norm $N_{\alpha,T}$. 

If \eqref{eq:tuning} is satisfied, $\mathcal S_{\alpha,r}(F)$ is
nonempty because it contains $\St(F)$
by Proposition \ref{prop:steinerlip}.
Indeed, we have, for $s,t\in[0,T]$,
\begin{equation*}
\|\St(F(t)) -\St(F(s))\| \leqslant 
\StLip{ed} d_\Haus(F(t),F(s))\leqslant \StLip{ed} \|F\|_{\alpha,T}|t - s|^{\alpha},
\end{equation*}
thus $\norm{\St(F)}_{\alpha,T}\leqslant r$.

Similarly, if {\HB} and \eqref{eq:tuning2} are satisfied, we have $\St(F)\in
\mathcal S_{\alpha,r}(F)$
by Proposition \ref{Demyanov_Steiner} and since, in that case, 
$N_{\alpha,T,\Dem}(F)<\infty$.
\end{proof}
%


%
\begin{remark}[Basic properties of the selections sets]
\label{basic_properties_GS_selections-A}
\rule{1em}{0em}
\begin{enumerate}
 \item  The set $\mathcal S_{\St}(F)$ is a nonempty and convex subset of
 $C^{\alpha\normalfont{\textrm{-H\"ol}}}([0,T];\matr)$
 such that, for every $f\in\mathcal S_{\St}(F)$, $N_{\alpha,T}(f)\leqslant
 N_{\alpha,T,\Dem}(F)$.
 Moreover, there exists a sequence $(f_n)_{n\in\mathbb N}$ of elements of $\mathcal S_{\mathrm{St}}(F)$ such that, for every $t\in [0,T]$,
 \begin{displaymath}
 F(t) =
 \overline{\bigcup_{n\in\mathbb N}\{f_n(t)\}}.
 \end{displaymath}
 Indeed, the convexity of $\mathcal S_{\St}(F)$ follows from
 Proposition  \ref{GS_points_convexity},
 the Castaing representation from Proposition
 \ref{Castaing_representation},
 and the comparison with  $N_{\alpha,T,\Dem}(F)$ stems from 
 Proposition \ref{Demyanov_Steiner}.
 
 \item Consequently, if {\HB} and \eqref{eq:tuning2} are satisfied, we have $\mathcal
   S_{\St}(F)\subset\mathcal S_{\alpha,r}(F)$
   (since in this case $N_{\alpha,T,\Dem}(F)<\infty$),
 and there exists a sequence $(f_n)_{n\in\mathbb N}$ of elements of
 $\mathcal S_{\alpha,r}(F)$ such that, 
 for every $t\in [0,T]$,
 \begin{equation}\label{basic_properties_selections_set_1}
 F(t) =
 \overline{\bigcup_{n\in\mathbb N}\{f_n(t)\}}.
 \end{equation}
 \end{enumerate}
\end{remark}
Let us establish some topological properties of $\mathcal S_{\alpha,r}(F)$ in $\mathbb L^2([0,T];\matr)$ and in $\C^{\alpha\textrm{-H\"ol}}([0,T];\matr)$.
%


%
\begin{proposition}\label{basic_topological_properties_selections_set_Holder}
For every $r\geqslant \rmin$, 
the set $\mathcal S_{\alpha,r}(F)$
is a bounded and closed subset of $\C^{\alpha\normalfont{\textrm{-H\"ol}}}([0,T];\matr)$. Moreover, for every sequence $(f_n)_{n\in\mathbb N}$ of elements of $\mathcal S_{\alpha,r}(F)$, there exists a subsequence $(f_{n_k})_{k\in\mathbb N}$ of $(f_n)_{n\in\mathbb N}$ such that, for every $\varepsilon\in ]0,\alpha]$, $(f_{n_k})_{k\in\mathbb N}$ converges in $\C^{(\alpha -\varepsilon)\normalfont{\textrm{-H\"ol}}}([0,T];\matr)$ to an element of $\mathcal S_{\alpha,r}(F)$.
\end{proposition}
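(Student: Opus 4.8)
The plan is to prove the three assertions --- boundedness, the sequential compactness in the weaker H\"older spaces, and closedness --- in that order, the middle one being the technical core.

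First I would settle boundedness. If $f\in\mathcal S_{\alpha,r}(F)$ then $\norm{f}_{\alpha,T}\leqslant r$ by definition, and since $f(t)\in F(t)$ for every $t$ we have $\norm{f(t)}\leqslant\norm{F(t)}_{d_\Haus}\leqslant\norm{F}_{\infty,T}$, whence $\norm{f}_{\infty,T}\leqslant\norm{F}_{\infty,T}$. Therefore $N_{\alpha,T}(f)\leqslant\norm{F}_{\infty,T}+r$, a bound uniform over $\mathcal S_{\alpha,r}(F)$, so the set is bounded in $\C^{\alpha\textrm{-H\"ol}}([0,T];\matr)$.

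Next I would treat the compactness statement. Given a sequence $(f_n)$ in $\mathcal S_{\alpha,r}(F)$, the uniform bounds $\norm{f_n}_{\infty,T}\leqslant\norm{F}_{\infty,T}$ and $\norm{f_n}_{\alpha,T}\leqslant r$ make the family uniformly bounded and equicontinuous, so the Arzel\`a--Ascoli theorem provides a subsequence $(f_{n_k})$ converging uniformly (that is, in $\norm{\cdot}_{\infty,T}$) to some continuous $f$. Passing to the limit in $\norm{f_{n_k}(t)-f_{n_k}(s)}\leqslant r\abs{t-s}^{\alpha}$ shows $\norm{f}_{\alpha,T}\leqslant r$, hence $\norm{f_{n_k}-f}_{\alpha,T}\leqslant 2r$. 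To upgrade the uniform convergence, I would fix $\varepsilon\in\,]0,\alpha]$ and use, for any map $g$ and $s<t$, the elementary split: if $\abs{t-s}<\eta$ then
\[
\frac{\norm{g(t)-g(s)}}{\abs{t-s}^{\alpha-\varepsilon}}
=\frac{\norm{g(t)-g(s)}}{\abs{t-s}^{\alpha}}\abs{t-s}^{\varepsilon}
\leqslant\norm{g}_{\alpha,T}\,\eta^{\varepsilon},
\]
while if $\abs{t-s}\geqslant\eta$ then $\norm{g(t)-g(s)}\abs{t-s}^{-(\alpha-\varepsilon)}\leqslant 2\norm{g}_{\infty,T}\,\eta^{-(\alpha-\varepsilon)}$. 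Applying this to $g=f_{n_k}-f$ and first choosing $\eta$ small enough that $2r\,\eta^{\varepsilon}$ falls below any prescribed tolerance, then letting $k\to\infty$ so that $\norm{f_{n_k}-f}_{\infty,T}\to 0$ kills the second term, yields $\norm{f_{n_k}-f}_{(\alpha-\varepsilon),T}\to 0$ and hence $N_{(\alpha-\varepsilon),T}(f_{n_k}-f)\to 0$.

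It remains to check that the limit lies in $\mathcal S_{\alpha,r}(F)$. Pointwise, $f_{n_k}(t)\to f(t)$ with $f_{n_k}(t)\in F(t)$ and $F(t)\in\cc(\matr)$ closed, so $f(t)\in F(t)$; being continuous, $f$ is measurable, hence $f\in\mathcal S^0(F)$, and the bound $\norm{f}_{\alpha,T}\leqslant r$ has already been obtained, so $f\in\mathcal S_{\alpha,r}(F)$. Finally, closedness in $\C^{\alpha\textrm{-H\"ol}}([0,T];\matr)$ follows from the same membership argument: a sequence converging in $N_{\alpha,T}$ converges a fortiori in $\norm{\cdot}_{\infty,T}$, so its limit again is a measurable selection of $F$ with $\alpha$-H\"older seminorm bounded by $r$. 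The only nonroutine point is the interpolation split in the second step; balancing the \emph{close} regime (where the factor $\abs{t-s}^{\varepsilon}$ supplies smallness) against the \emph{far} regime (where the bounded denominator lets the vanishing sup-norm take over) is where the argument must be carried out with care.
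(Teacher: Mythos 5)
Your proof is correct, and its skeleton is the same as the paper's: boundedness from $N_{\alpha,T}(f)\leqslant\norm{F}_{\infty,T}+r$, closedness from pointwise convergence together with the closedness of each $F(t)$, and a subsequence converging in every $\C^{(\alpha-\varepsilon)\textrm{-H\"ol}}$ norm whose limit stays in $\mathcal S_{\alpha,r}(F)$. The one place you genuinely diverge is the compactness step: the paper disposes of it in one line by citing Friz and Victoir \cite[Proposition 5.28]{FV10}, whereas you reprove that result from scratch --- Arzel\`a--Ascoli from the uniform bound and H\"older equicontinuity, the bound $\norm{f}_{\alpha,T}\leqslant r$ by passing to the limit in the H\"older quotient, and the near/far split in $\abs{t-s}$ (the factor $\abs{t-s}^{\varepsilon}\leqslant\eta^{\varepsilon}$ supplying smallness close to the diagonal, the vanishing sup norm taking over away from it). What your route buys is a self-contained, elementary argument; what the citation buys the paper is brevity. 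Two details you handled correctly and that matter: a single subsequence works for \emph{every} $\varepsilon\in\,]0,\alpha]$ simultaneously, since your interpolation uses only the uniform convergence and the fixed bound $2r$ on the $\alpha$-seminorm of the differences; and your derivation of the limit's seminorm bound directly from pointwise convergence is in fact the clean justification of the inequality $N_{\alpha,T}(f)\leqslant\sup_{n}N_{\alpha,T}(f_n)$ that the paper asserts in its closedness step.
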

%


%
\begin{proof}
Let $(f_n)_{n\in\mathbb N}$ be a sequence of elements of $\mathcal S_{\alpha,r}(F)$. By the definition of $\mathcal S_{\alpha,r}(F)$,
\begin{displaymath}
\sup_{n\in\mathbb N}\|f_n\|_{\infty,T}\leqslant \norm{F}_{\infty,T}
\textrm{ and }
\sup_{n\in\mathbb N}\|f_n\|_{\alpha,T}
\leqslant r.
\end{displaymath}
Therefore, 
by Proposition \ref{prop:compactnessHolder},
there exists a subsequence $(f_{n_k})_{k\in\mathbb N}$ of $(f_n)_{n\in\mathbb N}$ such that, for every $\varepsilon\in ]0,\alpha]$, $(f_{n_k})_{k\in\mathbb N}$ converges in $\C^{(\alpha -\varepsilon)\textrm{-H\"ol}}([0,T];\matr)$ to an element of $\mathcal S_{\alpha,r}(F)$.
\end{proof}
%


%
\begin{proposition}\label{topological_properties_selections_set_L2}
$\mathcal S_{\alpha,r}(F)$ is a bounded, closed and sequentially compact subset of $\mathbb L^2([0,T];\matr)$.
\end{proposition}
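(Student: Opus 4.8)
The plan is to transfer the compactness already established in Proposition~\ref{basic_topological_properties_selections_set_Holder} from the Hölder space down to $\mathbb L^2([0,T];\matr)$, exploiting the elementary fact that Hölder convergence on the compact interval $[0,T]$ is stronger than $\mathbb L^2$ convergence. Boundedness is immediate: every $f\in\mathcal S_{\alpha,r}(F)$ is a selection of $F$, so $\|f(t)\|\leqslant\norm{F(t)}_{d_\Haus}\leqslant\norm{F}_{\infty,T}$ for all $t\in[0,T]$, and integrating gives $\|f\|_{\mathbb L^2}\leqslant\sqrt{T}\,\norm{F}_{\infty,T}$, a bound independent of $f$. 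Hence $\mathcal S_{\alpha,r}(F)$ is bounded in $\mathbb L^2([0,T];\matr)$.

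For sequential compactness, I would take an arbitrary sequence $(f_n)_{n\in\mathbb N}$ in $\mathcal S_{\alpha,r}(F)$ and apply Proposition~\ref{basic_topological_properties_selections_set_Holder}, which yields a subsequence $(f_{n_k})_{k\in\mathbb N}$ that, for some fixed $\varepsilon\in\,]0,\alpha]$, converges in $\C^{(\alpha-\varepsilon)\textrm{-H\"ol}}([0,T];\matr)$ to a limit $f\in\mathcal S_{\alpha,r}(F)$. Since the norm $N_{\alpha-\varepsilon,T}$ dominates $\|\cdot\|_{\infty,T}$, this convergence is in particular uniform on $[0,T]$, and on a bounded interval uniform convergence implies $\mathbb L^2$ convergence via $\|f_{n_k}-f\|_{\mathbb L^2}\leqslant\sqrt{T}\,\|f_{n_k}-f\|_{\infty,T}\to 0$. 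Thus $f_{n_k}\to f$ in $\mathbb L^2([0,T];\matr)$ with $f\in\mathcal S_{\alpha,r}(F)$, which is exactly sequential compactness.

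Closedness should then come cheaply, and I would present it via the subsequence argument for robustness: if $(f_n)_{n\in\mathbb N}\subset\mathcal S_{\alpha,r}(F)$ converges in $\mathbb L^2$ to some $f$, the sequential compactness just proved furnishes a subsequence converging in $\mathbb L^2$ to some $g\in\mathcal S_{\alpha,r}(F)$, and by uniqueness of $\mathbb L^2$ limits $f=g$ almost everywhere, so $f\in\mathcal S_{\alpha,r}(F)$ as an element of $\mathbb L^2([0,T];\matr)$. (One may equally observe that every sequentially compact subset of the metric space $\mathbb L^2([0,T];\matr)$ is automatically closed, so this property is redundant once compactness is known.)

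None of these steps is genuinely hard; the argument is essentially bookkeeping that imports the work of Proposition~\ref{basic_topological_properties_selections_set_Holder}. The only point requiring a little care is that the convergence extracted there lives in the slightly weaker space $\C^{(\alpha-\varepsilon)\textrm{-H\"ol}}$ rather than in $\C^{\alpha\textrm{-H\"ol}}$; but since I only need uniform (hence $\mathbb L^2$) convergence toward a limit that already belongs to $\mathcal S_{\alpha,r}(F)$, this loss of a bit of Hölder regularity is harmless.
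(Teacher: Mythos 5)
Your proof is correct and follows essentially the same route as the paper: both hinge on Proposition~\ref{basic_topological_properties_selections_set_Holder} to extract a subsequence converging in $\C^{(\alpha-\varepsilon)\textrm{-H\"ol}}([0,T];\matr)$ to a limit already in $\mathcal S_{\alpha,r}(F)$, and then transfer that convergence to $\mathbb L^2([0,T];\matr)$. The only differences are cosmetic: the paper passes from this convergence to $\mathbb L^2$ convergence via uniform integrability and an $\mathbb L^1$-to-$\mathbb L^2$ bootstrap, where you use the direct estimate $\|\cdot\|_{\mathbb L^2}\leqslant\sqrt{T}\,\|\cdot\|_{\infty,T}$, and the paper proves closedness by a separate subsequence argument where you deduce it from sequential compactness.
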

%


%
\begin{proof}
Let  $(f_n)_{n\in\mathbb N}$ be a sequence of elements of $\mathcal
S_{\alpha,r}(F).$
According to
Proposition \ref{basic_topological_properties_selections_set_Holder},
there exists a subsequence $(f_{n_k})_{k\in\mathbb N}$ of $(f_n)_{n\in\mathbb N}$ such that, for every $\varepsilon\in ]0,\alpha]$, $(f_{n_k})_{k\in\mathbb N}$ converges in $\C^{(\alpha -\varepsilon)\normalfont{\textrm{-H\"ol}}}([0,T];\matr)$ to an element of $\mathcal S_{\alpha,r}(F)$. 
Note that
\begin{align}\label{maj-ui}
\sup_k \|f_{n_k}\|_{\infty,T} \leqslant \|F\|_{\infty,T}<\infty.
\end{align}
Then, the sequence $(f_{n_k})_k$ is uniformly integrable with respect to the Lebesgue measure on $[0,T]$ 
and $(f_{n_k})_k$ converges to $f$ in $\mathbb L^1([0,T];\matr).$ Using estimation \eqref{maj-ui}, the convergence holds in $\mathbb L^2([0,T];\matr).$ Thus, $\mathcal S_{\alpha,r}(F)$ is a {sequentially} compact subset of $\mathbb L^2([0,T];\matr)$.
\end{proof}
%


%
\subsection{Aumann-Young integral}
Now, consider $w\in \C^{\beta\textrm{-H\"ol}}([0,T];\mathbb R^d)$ and
let us define a set-valued Young integral with respect to $w$, using
special sets of selections.
%


%
\begin{definition}[Aumann-Young integral]
\label{set_valued_Young_integral}
The \emph{Aumann-Young integral} of $F$ with respect to $w$ and parameters
$\alpha$ and $r$ is defined by
\begin{displaymath}
\Aint{\alpha,r}_{0}^{T}F(t)\,dw(t) =
J_{T,\alpha,r}(F,w) :=
\left\{\int_{0}^{T}f(t)\,dw(t)
\tq f\in\mathcal S_{\alpha,r}(F)\right\}.
\end{displaymath}
\end{definition}
\begin{remark}\label{rem:michta-motyl}
Michta and Motyl define a larger Aumann-Young integral in
\cite{MM20}. For convex-valued $F$, their integral is
\begin{multline*}
 \Aint{\alpha,+\infty}_{0,+\infty}^{T}F(t)\,dw(t)
 = J_{T,\alpha,+\infty}(F,w) \\
 \begin{aligned}
 &:=
 \left\{ \int_{0}^{T}f(t)\,dw(t)
 \tq f\in\mathcal S_{0}(F)\cap
 \C^{\alpha\textrm{-H\"ol}}([0,T];\matr) \right\}\\
 &=\bigcup_{r\geqslant 0}J_{T,\alpha,r}(F,w).
 \end{aligned}
\end{multline*}
See Example \ref{exple:comparison} below for a comparison with
$\Aint{\alpha,r}_{0,+\infty}^{T}F(t)\,dw(t)$
when $r<\infty$.
\end{remark}
The following proposition provides some basic
properties of the set-valued Aumann-Young integral.
%


%
\begin{proposition}\label{basic_properties_set_valued_Young_integral}
For every $r\geqslant \rmin$,
the Aumann-Young integral of $F$ with respect to $w$ and parameters $\alpha$ and $r$ is a nonempty, bounded, closed and convex subset of $\R^e$.
\end{proposition}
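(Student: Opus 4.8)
The plan is to verify the four asserted properties one at a time, disposing of nonemptiness and convexity immediately, obtaining boundedness from the Young--Love estimate, and reserving closedness for last, where the compactness statement of Proposition~\ref{basic_topological_properties_selections_set_Holder} carries the argument.

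First, nonemptiness is immediate: for $r\geqslant\rmin$, Proposition~\ref{prop:basic} gives that $\mathcal S_{\alpha,r}(F)$ is nonempty, and each of its elements $f$ is $\alpha$-Hölder, so Theorem~\ref{Young_integral} produces a well-defined vector $\int_0^T f(t)\,dw(t)\in\R^e$; hence $J_{T,\alpha,r}(F,w)\neq\emptyset$. Convexity follows from the linearity of the Young integral in its integrand: $J_{T,\alpha,r}(F,w)$ is the image of the convex set $\mathcal S_{\alpha,r}(F)$ (convexity again by Proposition~\ref{prop:basic}) under the linear map $f\mapsto\int_0^T f\,dw$, and the linear image of a convex set is convex.

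Next, boundedness comes straight from the Young--Love estimate (Theorem~\ref{Young_Love_estimate}) taken with $s=0$, $t=T$. For any $f\in\mathcal S_{\alpha,r}(F)$ we have $\|f\|_{\alpha,T}\leqslant r$ by definition, and $\|f\|_{\infty,T}\leqslant\|F\|_{\infty,T}$ since $f(t)\in F(t)$ for all $t$. The estimate then gives
\begin{displaymath}
\left\|\int_0^T f(u)\,dw(u)\right\|
\leqslant \mathfrak c_{\alpha,\beta}\|w\|_{\beta,T}\bigl(r\,T^{\alpha}+\|F\|_{\infty,T}\bigr)T^{\beta},
\end{displaymath}
a bound independent of $f$, so $J_{T,\alpha,r}(F,w)$ is bounded.

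The main obstacle is closedness, which I expect to require the full strength of Proposition~\ref{basic_topological_properties_selections_set_Holder} together with the continuity of the Young integral. Take a sequence $x_n=\int_0^T f_n\,dw$ with $f_n\in\mathcal S_{\alpha,r}(F)$ and $x_n\to x$ in $\R^e$; I must exhibit $f\in\mathcal S_{\alpha,r}(F)$ with $x=\int_0^T f\,dw$. By Proposition~\ref{basic_topological_properties_selections_set_Holder} there is a subsequence $(f_{n_k})$ converging, for every $\varepsilon\in\,]0,\alpha]$, in $\C^{(\alpha-\varepsilon)\textrm{-H\"ol}}([0,T];\matr)$ to some limit $f\in\mathcal S_{\alpha,r}(F)$. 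Fix $\varepsilon>0$ small enough that $(\alpha-\varepsilon)+\beta>1$, which is possible because $\alpha+\beta>1$; then the Young integral is defined for $(\alpha-\varepsilon)$-Hölder integrands and agrees with the original one, since both arise as limits of the same Riemann sums (Theorem~\ref{Young_integral}), the value of the integral not depending on which Hölder scale the integrand is viewed in. Applying the Young--Love estimate to the difference $f_{n_k}-f\in\C^{(\alpha-\varepsilon)\textrm{-H\"ol}}$ and using linearity of the integral yields
\begin{displaymath}
\left\|\int_0^T f_{n_k}\,dw-\int_0^T f\,dw\right\|
\leqslant \mathfrak c_{\alpha-\varepsilon,\beta}\|w\|_{\beta,T}\bigl(\|f_{n_k}-f\|_{\alpha-\varepsilon,T}T^{\alpha-\varepsilon}+\|f_{n_k}-f\|_{\infty,T}\bigr)T^{\beta},
\end{displaymath}
whose right-hand side tends to $0$ as $k\to\infty$ since $N_{\alpha-\varepsilon,T}(f_{n_k}-f)\to0$. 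Consequently $x=\lim_k x_{n_k}=\int_0^T f\,dw\in J_{T,\alpha,r}(F,w)$, establishing closedness. The only genuine subtleties here are the consistency of the Young integral across Hölder exponents and the choice of $\varepsilon$ preserving $(\alpha-\varepsilon)+\beta>1$; once these are secured, the Lipschitz dependence of the integral on its integrand provided by the Young--Love estimate does the rest.
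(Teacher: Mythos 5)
Your proof is correct and follows essentially the same route as the paper's: nonemptiness and convexity inherited from $\mathcal S_{\alpha,r}(F)$ via linearity of the integral, boundedness from the Young--Love estimate, and closedness by extracting a subsequence converging in $\C^{(\alpha-\varepsilon)\textrm{-H\"ol}}$ to a limit in $\mathcal S_{\alpha,r}(F)$ (Proposition~\ref{basic_topological_properties_selections_set_Holder}) and applying Young--Love at the lower H\"older exponent with $(\alpha-\varepsilon)+\beta>1$. Your explicit remarks on the consistency of the Young integral across H\"older scales and on the constant $\mathfrak c_{\alpha-\varepsilon,\beta}$ are points the paper leaves implicit, but they do not change the argument.
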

%


%
\begin{proof}
Let us prove each property of the Aumann-Young integral of $F$ with respect to $w$ stated in Proposition \ref{basic_properties_set_valued_Young_integral}.

Since $\mathcal S_{\alpha,r}(F)$ is nonempty (resp.~convex), $J_{T,\alpha,r}(F,w)$ is nonempty (resp.~convex).
   
By Theorem \ref{Young_Love_estimate}, for every $f\in\mathcal S_{\alpha,r}(F)$,
\begin{align*}
 \left\|\int_{0}^{T}f(t)\,dw(t)\right\|
 \leqslant &
 \mathfrak c_{\alpha,\beta}T^{\beta}\|w\|_{\beta,T}(\|f\|_{\alpha,T}T^{\alpha} +\|f\|_{\infty,T}\\
 \leqslant &
 \mathfrak c_{\alpha,\beta}T^{\beta}(T^{\alpha}\vee 1)\|w\|_{\beta,T}(r+\norm{F}_{\infty,T}).
\end{align*}
Then, the Aumann-Young integral of $F$ with respect to $w$ is a bounded subset of $\R^e$.

Consider a converging sequence $(j_n)_{n\in\mathbb N}$ of elements of $J_{T,\alpha,r}(F,w)$. Its limit is denoted by $j$. By the definition of $J_{T,\alpha,r}(F,w)$, for every $n\in\mathbb N$, there exists $f_n\in\mathcal S_{\alpha,r}(F)$ such that
\begin{displaymath}
j_n =\int_{0}^{T}f_n(t)\,dw(t).
\end{displaymath}
By Proposition \ref{basic_topological_properties_selections_set_Holder}, there exists a subsequence $(f_{n_k})_{k\in\mathbb N}$ of $(f_n)_{n\in\mathbb N}$ such that, for any $\varepsilon\in ]0,\alpha]$, $(f_{n_k})_{k\in\mathbb N}$ converges in $\C^{(\alpha -\varepsilon)\textrm{-H\"ol}}([0,T];\matr)$ to an element $f$ of $\mathcal S_{\alpha,r}(F)$. So, by Theorem \ref{Young_Love_estimate}, for $\varepsilon <\alpha +\beta -1,$ for every $k\in\mathbb N$,
\begin{multline*}
 \left\|j_{n_k} -\int_{0}^{T}f(t)\,dw(t)\right\|\\
 \begin{aligned}
 \leqslant &
 \mathfrak c_{\alpha,\beta}\|w\|_{\beta,T}T^{\beta}(\|f_{n_k} - f\|_{\alpha -\varepsilon,T}T^{\alpha -\varepsilon} +
 \|f_{n_k} - f\|_{\infty,T})\\
 \leqslant &
 \mathfrak c_{\alpha,\beta}\|w\|_{\beta,T}T^{\beta}(T^{\alpha -\varepsilon}\vee 1)N_{\alpha -\varepsilon,T}(f_{n_k} - f)
 \xrightarrow[k\rightarrow\infty]{} 0.
 \end{aligned}
\end{multline*}
Therefore,
\begin{displaymath}
j =\int_{0}^{T}f(t)\,dw(t),
\end{displaymath}
and then $J_{T,\alpha,r}(F,w)$ is a closed subset of $\R^e$.
\end{proof}
%


%
\begin{example}[Comparison with Michta and Motyl's Young integral]
 \label{exple:comparison}
 In this example, Michta and Motyl's Young integral \cite{MM20} is
 equal to $\R$, whereas, by
 Proposition \ref{basic_properties_set_valued_Young_integral},
 $J_{T,\alpha,r}(F,w)$ is a compact interval.
 Assume that $\beta \leqslant \frac{1}{2},$ and
 let
 \[w(t)=t^{2\beta}\cos(\pi/t)\text{ and }F(t)=[-1,1], \quad
 (t\in[0,1]).\]
 We have
 $N_{\alpha,T}(F)=N_{\alpha,T,\Dem}(F)=1$
 and, for $0\leqslant s<t\leqslant 1$, 
 \begin{align*}
 \abs{w(t)-w(s)}
 \leqslant& \abs{\CCO{t^{2\beta} -s^{2\beta}}\cos (\pi /t)}
 +\abs{s^{2\beta}(\cos( \pi/ t) -\cos( \pi/s) )}\\
 \leqslant& \CCO{t^{2\beta} -s^{2\beta}}\\
 &+s^{2\beta}\abs{ \cos( \pi/ t) -\cos( \pi/s) }^{1-\beta}
 \abs{ \cos( \pi/ t) -\cos( \pi/s) }^{\beta}\\
 \leqslant& \CCO{t^{2\beta} -s^{2\beta}}
 +2^{1-\beta}\pi^{\beta}s^{2\beta}\CCO{\frac{1}{s^{\beta}}-\frac{1}{t^{\beta}}}\\
 \leqslant & 2\CCO{t^{\beta} -s^{\beta}}
 +2^{1-\beta}\pi^{\beta}\CCO{t^{\beta} -s^{\beta}}\\
 \leqslant & \CCO{t-s}^{\beta}\CCO{2+2^{1-\beta}\pi^{\beta}},
\end{align*}
thus $\|w\|_{\beta,1}\leqslant 2+2^{1-\beta}\pi^{\beta}$.

Let us define a sequence $(f_n)_{n\geqslant 1}$ of selections of $F$ by
\[
f_n(t)=\left\{
\begin{array}{lcl}
\sin(\pi/t)&\text{ if }&\frac{1}{n}\leqslant t\leqslant 1\\
0&\text{ if }&0\leqslant t\leqslant\frac{1}{n}.
\end{array}\right.
\]
Clearly, $\norm{f_n}_{\alpha,1}\rightarrow+\infty$ when $n\rightarrow\infty$.
Furthermore,
\begin{align*}
 \int_0^1f_n(t)\,dw(t)
 &=\int_{1/n}^1\sin(\pi/t)\,d\bigr(t^{2\beta}\cos(\pi/t)\bigl)\\
 &=\int_{1/n}^1\sin(\pi/t)
 \Bigr(2\beta t^{2\beta-1}\cos(\pi/t)-\pi t^{2\beta-2}\sin(\pi/t)\Bigl)\,dt\\
 &=\int_1^n\sin(\pi u)
 \Bigr(2\beta u^{1-2\beta}\cos(\pi u)-\pi u^{2-2\beta}\sin(\pi
 u)\Bigl)
 \frac{1}{u^2}\,du\\
 &\leqslant 2\beta\int_{1}^nu^{-1-2\beta}du
 -2\sum_{k=1}^{n-1}(k+1)^{-2\beta}\int_{k}^{k+1}\sin^2(\pi
 u)\,du\\
 &\rightarrow -\infty\text{ when }n\rightarrow\infty.
\end{align*}
By convexity of the Aumann integral and symmetry of $F$,
this shows that the integral of
$F$ with respect to $w$ in
the sense of Michta and Motyl \cite{MM20} is the whole line $\R$.
\end{example}
%


%
\begin{proposition}[Lipschitz continuity result with respect to the driving signal]
\label{continuity_set_valued_Young_integral_with_respect_w}
For every $r\geqslant \rmin$, 
the set-valued map
\begin{displaymath}
J_{T,\alpha,r}(F,.) :
\left\{
\begin{array}{rcl}
 \C^{\beta\normalfont{\textrm{-H\"ol}}}([0,T];\mathbb R^d)
 & \longrightarrow & 
 \cc(\mathbb R^e)\\
 w & \longmapsto & J_{T,\alpha,r}(F,w)
\end{array}\right.
\end{displaymath}
is Lipschitz continuous when $\cc(\mathbb R^e)$ is endowed with the Hausdorff distance $d_\Haus$.
\end{proposition}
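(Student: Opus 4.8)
The plan is to exploit the crucial structural feature of this integral: the set of selections $\mathcal S_{\alpha,r}(F)$ depends only on $F$, $\alpha$ and $r$, and \emph{not} on the driving signal. Consequently the two sets $J_{T,\alpha,r}(F,w_1)$ and $J_{T,\alpha,r}(F,w_2)$ are the images of one and the same family of integrands under two integration maps, and they can be compared selection by selection. Combined with the linearity of the Young integral, this reduces the problem to a single uniform estimate.

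First I would fix $w_1,w_2\in \C^{\beta\textrm{-H\"ol}}([0,T];\mathbb R^d)$ and a selection $f\in\mathcal S_{\alpha,r}(F)$. By linearity of the Young integral,
\[
\int_{0}^{T}f(t)\,dw_1(t)-\int_{0}^{T}f(t)\,dw_2(t)
=\int_{0}^{T}f(t)\,d(w_1-w_2)(t),
\]
and then I would apply the second inequality of the Young--Love estimate (Theorem \ref{Young_Love_estimate}) on $[0,T]$ to the integrand $f$ against the signal $w_1-w_2$. Since $f\in\mathcal S_{\alpha,r}(F)$ forces $\|f\|_{\alpha,T}\leqslant r$ and $\|f\|_{\infty,T}\leqslant\norm{F}_{\infty,T}$, this yields
\[
\left\|\int_{0}^{T}f(t)\,d(w_1-w_2)(t)\right\|
\leqslant L\,\|w_1-w_2\|_{\beta,T},
\qquad
L:=\mathfrak c_{\alpha,\beta}T^{\beta}\CCO{rT^{\alpha}+\norm{F}_{\infty,T}},
\]
a bound whose constant $L$ is independent of both $f$ and the signals $w_i$.

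This uniform bound is exactly what controls the Hausdorff distance. Given $j_1\in J_{T,\alpha,r}(F,w_1)$, write $j_1=\int_{0}^{T}f\,dw_1$ for some $f\in\mathcal S_{\alpha,r}(F)$; then $j_2:=\int_{0}^{T}f\,dw_2$ lies in $J_{T,\alpha,r}(F,w_2)$ because it uses the \emph{same} selection $f$, and $\|j_1-j_2\|\leqslant L\|w_1-w_2\|_{\beta,T}$, whence $d(j_1,J_{T,\alpha,r}(F,w_2))\leqslant L\|w_1-w_2\|_{\beta,T}$. Taking the supremum over $j_1$ and exchanging the roles of $w_1$ and $w_2$ gives
\[
d_\Haus\CCO{J_{T,\alpha,r}(F,w_1),J_{T,\alpha,r}(F,w_2)}
\leqslant L\,\|w_1-w_2\|_{\beta,T}\leqslant L\,N_{\beta,T}(w_1-w_2),
\]
which is the claimed Lipschitz estimate; the codomain indeed lies in $\cc(\mathbb R^e)$ by Proposition \ref{basic_properties_set_valued_Young_integral}, since a closed and bounded subset of $\R^e$ is compact.

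I do not expect any serious obstacle here: the entire argument rests on the $w$-independence of $\mathcal S_{\alpha,r}(F)$ together with the linearity of the Young integral, so the only points requiring care are the bookkeeping of the constants in the Young--Love estimate and the verification that the Hausdorff supremum is realized selection-by-selection (which is automatic precisely because both integrals range over the same selection set).
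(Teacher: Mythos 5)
Your proof is correct and takes essentially the same approach as the paper's: both exploit the fact that $\mathcal S_{\alpha,r}(F)$ is independent of the driving signal, compare the two integrals selection-by-selection via the linearity of the Young integral and the Young--Love estimate, and conclude by symmetry of the Hausdorff distance. The only cosmetic difference is that the paper phrases the matching of selections as an infimum term that vanishes, whereas you pick the same selection directly, and your constant $\mathfrak c_{\alpha,\beta}T^{\beta}\left(rT^{\alpha}+\norm{F}_{\infty,T}\right)$ is in fact marginally sharper than the paper's $\mathfrak c_{\alpha,\beta}T^{\beta}(T^{\alpha}\vee 1)\left(r+\norm{F}_{\infty,T}\right)$.
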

%


%
\begin{proof}
Consider $w^1,w^2\in \C^{\beta\textrm{-H\"ol}}([0,T];\mathbb R^d)$ and $j^1\in J_{T,\alpha,r}(F,w^1)$. So, there exists $f^1\in\mathcal S_{\alpha,r}(F)$ such that
\begin{displaymath}
j^1 =\int_{0}^{T}f^1(s)\,dw^1(s),
\end{displaymath}
and by Theorem \ref{Young_Love_estimate},
\begin{multline*}
 \begin{aligned}
 &
 d(j^1,J_{T,\alpha,r}(F,w^2))
 =\inf_{f\in\mathcal S_{\alpha,r}(F)}
 \left\|\int_{0}^{T}f(t)\,dw^2(t) -\int_{0}^{T}f^1(t)\,dw^1(t)\right\|\\
 \leqslant &
 \left\|\int_{0}^{T}f^1(t)d(w^2 - w^1)(t)\right\| +
 \inf_{f\in\mathcal S_{\alpha,r}(F)}
 \left\|\int_{0}^{T}(f - f^1)(t)\,dw^2(t)\right\|\\
 \leqslant &
 \mathfrak c_{\alpha,\beta}T^{\beta}(T^{\alpha}\vee 1)\left[N_{\alpha,T}(f^1)\|w^1 - w^2\|_{\beta,T} +
 \|w^2\|_{\beta,T}\inf_{f\in\mathcal S_{\alpha,r}(F)}N_{\alpha,T}(f - f^1)
 \right].
\end{aligned}
\end{multline*}
Since $f^1\in\mathcal S_{\alpha,r}(F)$,
$N_{\alpha,T}(f^1)\leqslant r+\norm{F}_{\infty,T}$
and the second term in the right-hand
side of the previous inequality is null.
Then
\begin{displaymath}
d(j^1,J_{T,\alpha,r}(F,w^2))
\leqslant
\mathfrak c_{\alpha,\beta}T^{\beta}(T^{\alpha}\vee 1)
(r +\norm{F}_{\infty,T})\|w^1 - w^2\|_{\beta,T}
\end{displaymath}
and, by symmetry,
\begin{displaymath}
d(j^2,J_{T,\alpha,r}(F,w^1))
\leqslant
\mathfrak c_{\alpha,\beta}T^{\beta}(T^{\alpha}\vee 1)
(r +\norm{F}_{\infty,T})\|w^1 - w^2\|_{\beta,T}
\end{displaymath}
for every $j^2\in J_{T,\alpha,r}(F,w^2)$. Therefore,
\begin{multline*}
 d_\Haus(J_{T,\alpha,r}(F,w^1),J_{T,\alpha,r}(F,w^2))\\
 \begin{aligned}
 & =\max\left\{\sup_{j^1\in
 J_{T,\alpha,r}(F,w^1)}d(j^1,J_{T,\alpha,r}(F,w^2))
 \tq
 \sup_{j^2\in J_{T,\alpha,r}(F,w^2)}d(j^2,J_{T,\alpha,r}(F,w^1))\right\}\\
 & \leqslant
 \mathfrak c_{\alpha,\beta}T^{\beta}(T^{\alpha}\vee 1)
 (r+\norm{F}_{\infty,T})\|w^1 - w^2\|_{\beta,T}.
\end{aligned}
\end{multline*}
\end{proof}
The following proposition provides
semicontinuity results for the set-valued Young's integral.
Let us first recall the topological superior and inferior limits in
Kuratowski's sense 
for a sequence of sets, see,
e.g., \cite{Beer}.
If $(A_n)$ is a sequence of closed subsets of a metric space $\M$,
let us denote
\begin{itemize}
 \item[] $\Li_{n\rightarrow\infty}{A_n}$ { the set of limits of
 sequences $(x_n)$ such that $x_n\in A_n$ for every $n$,}\\
 \item[] $\Ls_{n\rightarrow\infty}{A_n}$ { the set of limits of sequences $(x_{n})$ such that $x_n\in A_{m_n}$ for every $n$ for some
 subsequence $(A_{m_n})$ of $(A_n)$.}
\end{itemize}
Clearly,
$\Li_{n\rightarrow\infty}{A_n}\subset\Ls_{n\rightarrow\infty}{A_n}$.  
We say that $(A_n)$ \emph{converges in Kuratowski's sense}
to $A\subset \M$ if
\[\Ls_{n\rightarrow\infty}{A_n}\subset A\subset
\Li_{n\rightarrow\infty}{A_n}.\]
Convergence in
Kuratowski's sense is weaker than convergence for the Hausdorff
distance,
however both convergences are equivalent if $\M$ is compact. 
Indeed, 
by, e.g., \cite[Theorem 3.1 page 51]{Christensen},
the set $\komp(\M)$ of compact subsets of $\M$ is compact for the
topology of Hausdorff
distance, thus this topology coincide with any weaker separated (T2)
topology on $\komp(\M)$. But, if $\M$ is compact, the convergence in 
Kuratowski's sense is associated with a separated topology
(see, e.g., \cite[Theorem 5.2.6]{Beer}).
%


%
\begin{proposition}[Semicontinuity with respect to $F$]
\label{continuity_set_valued_Young_integral}
 Let $r\geqslant \rmin. $
 \begin{enumerate}[1.]
 \item Let $(F_n)_{n\in\mathbb N}$ be a sequence of elements of
 $\C^{\alpha\normalfont{\textrm{-H\"ol}}}([0,T];$ $\ccm)$ such that 
 \begin{displaymath}
 \Ls_{n\rightarrow\infty}F_n(t) \subset F(t)
 \tq
 \forall t\in [0,T],
 \end{displaymath}
 and that
 \[F_n\in\overline
 B_{\alpha,\ccm}(0,r +\varepsilon_n)\]
 for every $n\in\N$, for some
 sequence $(\varepsilon_n)_{n\in\mathbb N}$ of elements of
 $\R_+$, converging to $0$.
 Then
 \begin{displaymath}
 \Ls_{n\rightarrow\infty}
 J_{T,\alpha,r +\varepsilon_n}(F_n,w)\subset J_{T,\alpha,r}(F,w).
 \end{displaymath}
 \item Assume furthermore that
 \[F(t)\subset\Li_{n\rightarrow\infty}F_n(t)\tq
 \forall t\in [0,T].\]
 Then
 \[J_{T,\alpha,r}(F,w)\subset \Li_{n\rightarrow\infty} J_{T,\alpha,2r+\epsilon_n}(F_n,w).
 \]
\end{enumerate}
\end{proposition}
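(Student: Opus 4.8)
\emph{First inclusion (upper part).} The plan is to unfold the definition of the Kuratowski upper limit and reduce everything to a convergent sequence of Young integrals. Let $j\in\Ls_{n\to\infty}J_{T,\alpha,r+\varepsilon_n}(F_n,w)$; after passing to the subsequence along which the limit is attained, I may write $j=\lim_k j_k$ with $j_k=\int_0^T f_k(t)\,dw(t)$ and $f_k\in\mathcal S_{\alpha,r+\varepsilon_{n_k}}(F_{n_k})$. Because $F_n\in\overline B_{\alpha,\ccm}(0,r+\varepsilon_n)$ and $\varepsilon_n\to0$, these selections obey the uniform bounds $\|f_k\|_{\infty,T}\leqslant r+\sup_m\varepsilon_m<\infty$ and $\|f_k\|_{\alpha,T}\leqslant r+\varepsilon_{n_k}$, so the compactness argument behind Proposition \ref{basic_topological_properties_selections_set_Holder} (Friz--Victoir \cite[Prop.~5.28]{FV10}) yields a further subsequence converging, in $\C^{(\alpha-\varepsilon)\textrm{-H\"ol}}([0,T];\matr)$ for every $\varepsilon\in\,]0,\alpha]$, to some limit $f$. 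Three observations then close the argument: first, $f(t)\in F(t)$ for all $t$, since $f_k(t)\in F_{n_k}(t)$ and $f_k(t)\to f(t)$ force $f(t)\in\Ls_{n\to\infty}F_n(t)\subset F(t)$; second, $\|f\|_{\alpha,T}\leqslant\liminf_k\|f_k\|_{\alpha,T}\leqslant r$ by lower semicontinuity of the $\alpha$-H\"older seminorm under pointwise convergence, so $f\in\mathcal S_{\alpha,r}(F)$; third, choosing $\varepsilon<\alpha+\beta-1$ and invoking the Young--Love estimate (Theorem \ref{Young_Love_estimate}) gives $\int_0^T f_k\,dw\to\int_0^T f\,dw$, whence $j=\int_0^T f\,dw\in J_{T,\alpha,r}(F,w)$.

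\emph{Second inclusion (lower part): reduction.} Here I would first note that, for closed sets, $j\in\Li_{n\to\infty}A_n$ is equivalent to $d(j,A_n)\to0$; so it suffices to fix $j=\int_0^T f\,dw\in J_{T,\alpha,r}(F,w)$, with $f\in\mathcal S_{\alpha,r}(F)$, and to produce selections $f_n\in\mathcal S_{\alpha,2r+\varepsilon_n}(F_n)$ with $\int_0^T f_n\,dw\to j$. The two Kuratowski hypotheses together read $\Ls_nF_n(t)\subset F(t)\subset\Li_nF_n(t)$, i.e.\ $F_n(t)\to F(t)$ in Kuratowski's sense for every $t$; since all values lie in one fixed ball of $\matr$, this is pointwise Hausdorff convergence, and the equi-H\"older bound $\|F_n\|_{\alpha,T}\leqslant r+\varepsilon_n$ upgrades it, via Arzel\`a--Ascoli, to uniform convergence $\eta_n:=\sup_{t\in[0,T]}d_\Haus(F_n(t),F(t))\to0$. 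Granting a selection $f_n\in\mathcal S_{\alpha,2r+\varepsilon_n}(F_n)$ with $\|f_n-f\|_{\infty,T}\to0$, the uniform $\alpha$-H\"older bound together with uniform convergence give $\|f_n-f\|_{(\alpha-\varepsilon),T}\to0$ by interpolation, and the Young--Love estimate then delivers $\int_0^T f_n\,dw\to\int_0^T f\,dw=j$.

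\emph{Second inclusion: construction and main obstacle.} Building such $f_n$ is the step I expect to be hardest. The obvious candidate $f_n(t)=\mathrm{proj}_{F_n(t)}(f(t))$ does converge uniformly to $f$, because $\|f_n(t)-f(t)\|=d(f(t),F_n(t))\leqslant\eta_n$, but it violates the H\"older constraint: the projection of a fixed point onto a moving convex target is only $\tfrac12$-H\"older in the Hausdorff distance of that target, so $\|f_n\|_{\alpha,T}$ is generically infinite and $f_n\notin\mathcal S_{\alpha,2r+\varepsilon_n}(F_n)$. The natural alternative is the generalized Steiner machinery of Section \ref{subsection_GS_selections}, whose selections are genuinely $\alpha$-H\"older and stable under set operations: using the Castaing-type representation of Remark \ref{basic_properties_GS_selections-A} I would approximate $f$ by convex combinations of Steiner selections of $F$ (again Steiner selections, by Proposition \ref{GS_points_convexity}), transfer each building block to $F_n$ (a Steiner selection of $F_n$ is automatically a selection of $F_n$), and reassemble. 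The decisive point is the seminorm budget: $2r+\varepsilon_n$ is exactly $\|f\|_{\alpha,T}+\|F_n\|_{\alpha,T}$ up to $o(1)$, so the increment $f_n(t)-f_n(s)$ must split into a part tracking $f$ (of size $\lesssim r|t-s|^\alpha$) and a part following the motion of $F_n$ (of size $\lesssim\|F_n\|_{\alpha,T}|t-s|^\alpha$). The metric projection respects the first but destroys the second, while a convex combination of Steiner selections of $F_n$ respects the H\"older scaling but need not track $f$ within budget, since under \HA\ the Lipschitz constant of $\St_\mu$ for $d_\Haus$ is not uniform in $\mu$. Reconciling these two requirements — a selection transfer that is at once non-expansive in the base point and genuinely $\alpha$-H\"older-compatible with the variation of $F_n$ — is the essential difficulty, and is presumably where the Demyanov/Steiner structure (Propositions \ref{GS_points_convexity} and \ref{Demyanov_Steiner}) must be brought decisively to bear.
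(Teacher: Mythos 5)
Your Part 1 is correct and is, step for step, the paper's own argument: uniform bounds on the selections $f_k$, compactness via Friz--Victoir \cite[Proposition 5.28]{FV10}, $f(t)\in\Ls_{n\rightarrow\infty}F_n(t)\subset F(t)$ pointwise, lower semicontinuity of the H\"older seminorm (\cite[Lemma 5.12]{FV10} in the paper), and the Young--Love estimate to pass to the limit in the integrals. Your reduction of Part 2 is also exactly the paper's: it suffices to produce $f_n\in\mathcal S_{\alpha,2r+\varepsilon_n}(F_n)$ with $\|f_n-f\|_{\infty,T}\rightarrow 0$ and to conclude by interpolation (the paper invokes \cite[Proposition 6.12]{FV10} instead). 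The genuine gap in your proposal is that you never produce such $f_n$: your final paragraph is a discussion of difficulties with two candidate constructions, not an argument, so Part 2 is left unproved.

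You should know, however, that the construction you examined and rejected is precisely the one the paper uses: it sets $f_n(t)=\proj{F_n(t)}(f(t))$, splits the increment as $\|f_n(t)-f_n(s)\|\leqslant\|f(t)-f(s)\|+\|\proj{F_n(t)}(f(s))-\proj{F_n(s)}(f(s))\|$, asserts that the second term is bounded by $d_\Haus(F_n(t),F_n(s))$ as a consequence of \eqref{eq:dH}, and concludes $\|f_n\|_{\alpha,T}\leqslant 2r+\varepsilon_n$. Your objection to that bound is mathematically sound. Equation \eqref{eq:dH} controls distance functions, not projections; the Lipschitz estimate $\|\proj{A}(x)-\proj{B}(x)\|\leqslant d_\Haus(A,B)$ is valid when $x\in A\cup B$, but here $f(s)$ lies in $F(s)$, not in $F_n(s)$, and the correct general estimate is $\|\proj{A}(x)-\proj{B}(x)\|^2\leqslant\bigl(d(x,A)+d(x,B)\bigr)\,d_\Haus(A,B)$. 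This yields an extra term of order $\sqrt{\eta_n\,d_\Haus(F_n(s),F_n(t))}\lesssim\sqrt{\eta_n r}\,|t-s|^{\alpha/2}$, where $\eta_n:=\sup_t d_\Haus(F(t),F_n(t))$, which exceeds the $\alpha$-H\"older budget at scales $|t-s|\lesssim(\eta_n/r)^{1/\alpha}$. The loss is real, not an artifact of the estimate: take $F_n(s)$ and $F_n(t)$ to be two segments of half-length $L$ through a common point making a small angle $\phi$, and $f(s)$ at distance $\eta_n$ from that point, perpendicular to $F_n(s)$, with $L<\eta_n$ and $\eta_n\sin\phi\leqslant L$; then the two projections differ by $\eta_n\sin\phi$ while $d_\Haus(F_n(s),F_n(t))=L\sin\phi$, so the paper's claimed inequality fails by the factor $\eta_n/L$, and tuning $L=\eta_n/4$ makes $\|f_n\|_{\alpha,T}>2r+\varepsilon_n$. (Your stronger claim that $\|f_n\|_{\alpha,T}$ is \emph{generically infinite} would require a multiscale version of this example, but the budget violation alone already breaks the proof.) In short: the ``main obstacle'' you identified is not a defect of your approach but an unjustified step in the published proof of Part 2, which neither the paper nor your proposal closes. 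What the projection construction does give is selections that are $\alpha$-H\"older with constant $2r+\varepsilon_n$ above the scale $(\eta_n/r)^{1/\alpha}$ and $(\alpha/2)$-H\"older globally with defect coefficient $\sqrt{2\eta_n(r+\varepsilon_n)}\rightarrow 0$; upgrading this to membership in $\mathcal S_{\alpha,2r+\varepsilon_n}(F_n)$, or replacing it by a Steiner-type construction under hypothesis \HA{} alone, would require a genuinely new idea.
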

%


%
\begin{proof}
1. Let us prove that
\begin{displaymath}
\overline J :=\Ls_{n\rightarrow\infty}J_{T,\alpha,r +\varepsilon_n}(F_n,w)
\subset J_{T,\alpha,r}(F,w).
\end{displaymath}
Consider $j\in\overline J$. Then, there exists a sequence $(f_n)_{n\in\mathbb N}$ of elements of the space $\C^{\alpha\textrm{-H\"ol}}([0,T];\matr)$ such that
\begin{displaymath}
f_n\in\mathcal S_{\alpha,r +\varepsilon_n}(F_n)
\tq
\forall n\in\mathbb N
\end{displaymath}
and
\begin{displaymath}
j =\lim_{k\rightarrow\infty}\int_{0}^{T}f_{n_k}(t)\,dw(t),
\end{displaymath}
where $(f_{n_k})_{k\in\mathbb N}$ is a subsequence of $(f_n)_{n\in\mathbb N}$.
By the definition of $\mathcal S_{\alpha,r +\varepsilon_n}(F_n)$, $n\in\mathbb N$,
\begin{displaymath}
\|f_{n_k}(t)\|\leqslant
\sup_{n,s}\|F_n(s)\|_{d_\Haus}
\leqslant
\norm{F}_{\infty,T} +\sup_{n\in\mathbb N}\varepsilon_n <\infty
\end{displaymath}
for every $k\in\mathbb N$ and $t\in [0,T]$, and
\begin{displaymath}
\sup_{k\in\mathbb N}\|f_{n_k}\|_{\alpha,T}
\leqslant r +\sup_{n\in\mathbb N}\varepsilon_n.
\end{displaymath}
Then, 
by Proposition \ref{prop:compactnessHolder},
there exists a subsequence $(f_{m_k})_{k\in\mathbb N}$ of $(f_{n_k})_{k\in\mathbb N}$ such that, for every $\varepsilon\in ]0,\alpha]$, such that $\varepsilon <\alpha+\beta -1,$ $(f_{m_k})_{k\in\mathbb N}$ converges in $\C^{(\alpha -\varepsilon)\textrm{-H\"ol}}([0,T];\mathbb R^d)$ to an element $f$ of $\C^{\alpha\textrm{-H\"ol}}([0,T];\mathbb R^d)$. So,
\begin{displaymath}
j =\int_{0}^{T}f(t)\,dw(t).
\end{displaymath}
It remains to check that $f\in\mathcal S_{\alpha,r}(F)$. For any $t\in [0,T]$, since $f_{m_k}(t)\in F_{m_k}(t)$ for every $k\in\mathbb N$, and since $f$ is in particular the pointwise limit of $(f_{m_k})_{k\in\mathbb N}$,
\begin{displaymath}
f(t) =\limsup_{k\rightarrow\infty}f_{m_k}(t)\in\Ls_{k\rightarrow\infty}F_{m_k}(t) = F(t).
\end{displaymath}
Moreover, by Friz and Victoir \cite[Lemma 5.12]{FV10},
\begin{displaymath}
\norm{f}_{\alpha,T}
\leqslant
\liminf_{k\rightarrow\infty}
\norm{f_{m_k}}_{\alpha,T}
\leqslant r +\lim_{n\rightarrow\infty}\varepsilon_n = r.
\end{displaymath}
Therefore, $j\in J_{T,\alpha,r}(F,w)$.

2. The supplementary hypothesis implies that $(F_n(t))$ converges in
Kuratowski's sense to $F(t)$.
Furthermore, since $(F_n)$ is bounded in
$\C^{\alpha\normalfont{\textrm{-H\"ol}}}([0,T];$ $\ccm)$, it is
bounded for $\norm{.}_{\infty,T}$,
thus
$(F_n(t))$ converges to $F(t)$ for the Hausdorff distance.

Let $j\in J_{T,\alpha,r}(F,w)$, and let $f\in\mathcal
S_{\alpha,r}(F)$ such that $j =\int_{0}^{T}f(t)\,dw(t)$.
Set $f_n(t)=\proj{F_n(t)}(f(t))$ for every $t\in[0,T]$ and each
integer $n$, where $\proj{F_n(t)}$ denotes the orthogonal projection
on $F_n(t)$. We have, for any $t\in[0,T]$,
\[\norm{f(t) - f_n(t)}\leqslant d_\Haus(F(t),F_n(t))\rightarrow 0\text{ when }
n\rightarrow\infty,\]
thus $(f_n)$ converges uniformly to $f$. 
Furthermore, for any $n$ and for $s,t\in[0,T]$,
\begin{align*}
 \norm{f_n(t)-f_n(s)}
 &\leqslant \norm{\proj{F_n(t)}(f(t))-\proj{F_n(t)}(f(s))}
 +\norm{\proj{F_n(t)}(f(s))-\proj{F_n(s)}(f(s))}\\
 &\leqslant \norm{f(t)-f(s)}+ d_\Haus(F_n(t),F_n(s)). 
\end{align*}
Indeed, it is well known that the projection operator
$\proj{F_n(t)}$ is non expansive (see, e.g., \cite[page 118]{urruty}),
and the estimation of
$\norm{\proj{F_n(t)}(f(s)) -\proj{F_n(s)}(f(s))}$ follows from \eqref{eq:dH}.
Since $r\geqslant\norm{F}_{\alpha,T}$, we deduce
\begin{equation*}
\norm{f_n}_{\alpha,T}\leqslant\norm{f}_{\alpha,T} +\norm{F_n}_{\alpha,T}
\leqslant 2r+\epsilon_n.
\end{equation*}
Thus $j_n :=\int_{0}^{T}f_n(t)\,dw(t)\in J_{T,\alpha,2r +\epsilon_n}(F_n,w)$. 
Furthermore, thanks to \cite[Proposition 6.12]{FV10}, we have 
$\lim_{n\rightarrow\infty}j_n = j$.
\end{proof}
The preceding result can be improved when the multifunctions $F_n$ are
constructed from $F$ using some recipe which can also be applied to
their selections. This can be useful for numerical approximations.
%


%
\begin{proposition}\label{cor:approx}(Time discretization of the
  multivalued integral)
  Let $(D_n)$ be a sequence of dissections of $[0,T]$, say,
  $D_n=(t_0^n,\dots,t_{m_n}^n)$, $0=t_0<\dots<t_{m_n}^n=T$, and
  assume that
  $\abs{D_n}$ converges to $0$, where  $\abs{D_n}=\max_{1\leqslant
  i\leqslant m_n-1}(t_{i+1}^n-t_i^n)$ is the
  mesh of $D_n$.
  For each $n$, for each $i\in\{1,\dots,m_n-1\}$ and for any
  $t\in[t_i^n,t_{i+1}^n]$, set
  \[
  F_n(t)=\frac{t-t_i^n}{t_{i+1}^n-t_i^n}F(t_i^n)+\frac{t_{i+1}^n-t}{t_{i+1}^n-t_i^n}F(t_{i+1}^n).
  \]
  Then 
  \[ \lim_{n\rightarrow\infty} d_\Haus\left(J_{T,r}(F_n,w),J_{T,r}(F,w) \right)=0.\]
\end{proposition}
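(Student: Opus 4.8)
The plan is to show that $(J_{T,\alpha,r}(F_n,w))_n$ converges to $J_{T,\alpha,r}(F,w)$ in Kuratowski's sense and then to upgrade this to convergence for $d_\Haus$. First I would record the elementary properties of the interpolated multifunctions. Each $F_n(t)$ is a Minkowski convex combination of two values of $F$, and by Proposition \ref{GS_points_convexity} the map $u\mapsto\St_\mu(F_n(u))$ is exactly the piecewise-linear interpolant along $D_n$ of the $\R^{ed}$-valued map $u\mapsto\St_\mu(F(u))$. Using the support-function description of $d_\Haus$ and Proposition \ref{Demyanov_Steiner} for $d_\D$, together with the elementary fact that piecewise-linear interpolation does not increase the $\alpha$-Hölder seminorm (for $\alpha\leqslant 1$, this rests on the concavity of $h\mapsto h^{\alpha}$), I would get
\[\norm{F_n}_{\alpha,T}\leqslant\norm{F}_{\alpha,T},\quad
\norm{F_n}_{\alpha,T,\Dem}\leqslant\norm{F}_{\alpha,T,\Dem}\ (\text{under }\HB),\quad
\norm{F_n}_{\infty,T}\leqslant\norm{F}_{\infty,T}.\]
In particular $\rmin(F_n)\leqslant\rmin\leqslant r$, so each $J_{T,\alpha,r}(F_n,w)$ is a well-defined nonempty compact convex subset of $\R^e$ by Proposition \ref{basic_properties_set_valued_Young_integral}; moreover the Young--Love estimate bounds all of them by the single radius $R=\mathfrak c_{\alpha,\beta}T^{\beta}(T^{\alpha}\vee 1)\norm{w}_{\beta,T}(r+\norm{F}_{\infty,T})$, so they all lie in the fixed compact ball $\B=\overline B_{\R^e}(0,R)$. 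The same interpolation estimate gives $\sup_{t\in[0,T]}d_\Haus(F_n(t),F(t))\leqslant\norm{F}_{\alpha,T}\abs{D_n}^{\alpha}\to 0$, whence $(F_n(t))$ converges to $F(t)$ in $d_\Haus$ for every $t$ and $\Ls_{n}F_n(t)=\Li_{n}F_n(t)=F(t)$.

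For the upper inclusion I would invoke the first part of Proposition \ref{continuity_set_valued_Young_integral} with $\varepsilon_n\equiv 0$: since $\Ls_{n}F_n(t)\subset F(t)$ and $\norm{F_n}_{\alpha,T}\leqslant r$, it yields $\Ls_{n}J_{T,\alpha,r}(F_n,w)\subset J_{T,\alpha,r}(F,w)$.

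The lower inclusion is where the "recipe'' is used, and is the point of the improvement over Proposition \ref{continuity_set_valued_Young_integral}(2), which only gives $2r$ on the right. Fix $j\in J_{T,\alpha,r}(F,w)$ and write $j=\int_0^Tf(t)\,dw(t)$ with $f\in\mathcal S_{\alpha,r}(F)$. Applying the very same interpolation recipe to the selection, I define $f_n$ to be piecewise-linear on each $[t_i^n,t_{i+1}^n]$ with $f_n(t_i^n)=f(t_i^n)$. Then $f_n(t)\in F_n(t)$ for every $t$, because $f(t_i^n)\in F(t_i^n)$, $f(t_{i+1}^n)\in F(t_{i+1}^n)$ and $F_n(t)$ is the corresponding Minkowski combination; moreover $\norm{f_n}_{\alpha,T}\leqslant\norm{f}_{\alpha,T}\leqslant r$ by the same interpolation lemma, so $f_n\in\mathcal S_{\alpha,r}(F_n)$ and $j_n:=\int_0^Tf_n(t)\,dw(t)\in J_{T,\alpha,r}(F_n,w)$. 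Since $\norm{f-f_n}_{\infty,T}\leqslant\norm{f}_{\alpha,T}\abs{D_n}^{\alpha}\to 0$ while $\norm{f-f_n}_{\alpha,T}\leqslant 2r$, the elementary interpolation inequality between Hölder norms gives $N_{\alpha-\varepsilon,T}(f-f_n)\to 0$ for every $\varepsilon\in\,]0,\alpha[$; choosing $\varepsilon<\alpha+\beta-1$ and applying Theorem \ref{Young_Love_estimate} shows $\norm{j_n-j}\to 0$. Hence $j\in\Li_{n}J_{T,\alpha,r}(F_n,w)$, that is, $J_{T,\alpha,r}(F,w)\subset\Li_{n}J_{T,\alpha,r}(F_n,w)$.

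Combining the two inclusions, $(J_{T,\alpha,r}(F_n,w))_n$ converges to $J_{T,\alpha,r}(F,w)$ in Kuratowski's sense. As all these sets are compact and contained in the fixed compact ball $\B\subset\R^e$, and since on $\komp(\B)$ the Kuratowski and $d_\Haus$ convergences coincide (the equivalence recalled before Proposition \ref{continuity_set_valued_Young_integral}), the convergence holds for $d_\Haus$, which is the assertion. I expect the main obstacle to be the interpolation lemma $\norm{\widehat g}_{\alpha,T}\leqslant\norm{g}_{\alpha,T}$: it is exactly what makes the recipe produce a selection of $F_n$ satisfying the \emph{same} bound $r$ (rather than $2r$), and for points $s,t$ lying in different subintervals its proof requires the concavity of $h\mapsto h^{\alpha}$ (applied through Jensen's inequality to the node values, after reducing to the case where one endpoint is a node) rather than a mere triangle inequality.
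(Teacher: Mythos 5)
Your proof is correct and follows essentially the same route as the paper's: the norm-preserving property of piecewise-linear interpolation (which the paper cites from \cite{cox}), the upper inclusion via Part 1 of Proposition \ref{continuity_set_valued_Young_integral}, the lower inclusion by applying the same interpolation recipe to the selection $f$ itself so as to stay within the \emph{same} radius $r$, and the upgrade from Kuratowski to Hausdorff convergence by uniform boundedness/compactness. The only differences are cosmetic: you sketch a proof of the interpolation lemma and justify $j_n\rightarrow j$ via the H\"older interpolation inequality plus the Young--Love estimate, where the paper instead cites \cite{cox} and concludes as in Proposition \ref{continuity_set_valued_Young_integral} via \cite[Proposition 6.12]{FV10}.
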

%


%
\begin{proof}
By uniform continuity of $F$ on $[0,T]$, the sequence $(F_n)$
converges uniformly to $F$ for $d_\Haus$, and $\norm{F_n}_{d_\Haus}\leqslant
\norm{F}_{d_\Haus}$ for all $n$.
Furthermore, we have $N_{\alpha,T}(F_n)\leqslant N_{\alpha,T}(F)$ for all
$n$, see \cite{cox}.

From Part 1 of Proposition \ref{continuity_set_valued_Young_integral},
we have that
$\Ls J_{T,r}(F_n,w)\subset J_{T,\alpha,r}(F,w)$.

Now, let $j\in J_{T,\alpha,r}(F,w)$, and let $f\in \mathcal
S_{\alpha,r}(F)$ such that $j=\int_{0}^{T}f(t)\,dw(t)$.
Define $f_n\in\mathcal S_{\alpha,r}(F_n)$ by
\[f_n(t)=\frac{t-t_i^n}{t_{i+1}^n-t_i^n}f(t_i^n)+\frac{t_{i+1}^n-t}{t_{i+1}^n-t_i^n}f(t_{i+1}^n).
\]
for each $i\in\{1,\dots,m_n-1\}$ and for every
$t\in[t_i^n,t_{i+1}^n]$.
Then $(f_n)$ converges uniformly to $f$, and we conclude as in the
proof of Proposition \ref{continuity_set_valued_Young_integral} that
$J_{T,\alpha,r}(F,w)\subset \Li J_{T,r}(F_n,w)$, thus
$({J_{T,r}(F_n,w)})_n$ converges to ${J_{T,r}(F_n,w)}$ in Kuratowski's
sense.

Since the sequence $({J_{T,r}(F_n,w)})_n$ is bounded for
$\norm{.}_{d_\Haus}$, it is relatively compact for the Hausdorff distance, 
we deduce that it converges to ${J_{T,r}(F_n,w)}$ for $d_\Haus$. 
\end{proof}
Let us conclude this section by investigating the indefinite Aumann-Young integral
\begin{align*}
 t\mapsto \Aint{\alpha,r}_{0}^{t}F(s)\,dw(s)
 &=
 J_{t,\alpha,r}(F,w) \\
 &:=
 \left\{\int_{0}^{T}f(s)\mathbf{1}_{[0,t]}(s)\,dw(s)
 \tq f\in\mathcal S_{\alpha,r}(F)\right\}.
\end{align*}
%

\begin{remark}[Dependence on $T$ of the indefinite integral]
  \label{rem:dependT}
  Since $\Aint{\alpha,r}_{0}^{t}F(s)\,dw(s)$ is built using
  elements of $\mathcal S_{\alpha,r}(F)$,
  it follows from Remark \ref{rem:Salpha-dependT} that 
  our indefinite integral depends on $T$.
  More accurate but rather heavy notations 
  could be
  $\Aint{\alpha,T,r}_{0}^{t}F(s)\,dw(s)
  =
  J_{t,\alpha,T,r}(F,w)$. 
\end{remark}

\begin{remark}\label{rem-ppte-T}
The previous results on $J_{T,\alpha,r}(F,w)$ remain true for
$J_{t,\alpha,r}(F,w)$, by the same arguments.
\end{remark}

%


%
\begin{proposition}[Continuity of the indefinite Aumann-Young integral]
\label{Holder_continuity_Aumann_Young_integral}
The set-valued map $t\in [0,T]\mapsto J_{t,\alpha,r}(F,w)$ is
$\beta$-H\"older continuous with constant
$\Cte_{\alpha,\beta,T}(\norm{F}_{\infty,T}+r)\|w\|_{\beta,T}$
when $\cc(\R^e)$ is endowed with the Hausdorff distance
$d_\Haus$,
where $\Cte_{\alpha,\beta,T}$ is the constant defined in 
Theorem \ref{Young_Love_estimate}.
\end{proposition}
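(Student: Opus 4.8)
The plan is to bound the Hausdorff distance $d_\Haus(J_{s,\alpha,r}(F,w), J_{t,\alpha,r}(F,w))$ for arbitrary $s<t$ in $[0,T]$ directly from the Young--Love estimate, exploiting the crucial fact that every selection $f\in\mathcal S_{\alpha,r}(F)$ is defined on the whole of $[0,T]$, so that the \emph{same} $f$ produces compatible points in both sets. This is what makes the estimate essentially immediate.

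First I would note that, since $\int_0^T f(u)\mathbf 1_{[0,t]}(u)\,dw(u)=\int_0^t f(u)\,dw(u)$, any element $j^s\in J_{s,\alpha,r}(F,w)$ has the form $j^s=\int_0^s f(u)\,dw(u)$ for some $f\in\mathcal S_{\alpha,r}(F)$. The natural candidate in $J_{t,\alpha,r}(F,w)$ is $\int_0^t f(u)\,dw(u)$, built from the \emph{same} selection $f$; this is legitimate precisely because $f\in\mathcal S_{\alpha,r}(F)$ is defined over all of $[0,T]$, hence may be used at both endpoints. Consequently
\[
d(j^s, J_{t,\alpha,r}(F,w)) \leqslant \left\|\int_0^t f(u)\,dw(u) - \int_0^s f(u)\,dw(u)\right\| = \left\|\int_s^t f(u)\,dw(u)\right\|.
\]

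Next I would apply the second inequality of Theorem \ref{Young_Love_estimate} on the interval $[s,t]$ and use the bounds $\|f\|_{\alpha,T}\leqslant r$ and $\|f\|_{\infty,T}\leqslant\|F\|_{\infty,T}$ valid for every $f\in\mathcal S_{\alpha,r}(F)$, obtaining
\[
\left\|\int_s^t f(u)\,dw(u)\right\| \leqslant \mathfrak c_{\alpha,\beta}\|w\|_{\beta,T}\bigl(r T^\alpha + \|F\|_{\infty,T}\bigr)|t-s|^\beta.
\]
Since $rT^\alpha+\|F\|_{\infty,T}\leqslant (T^\alpha\vee 1)(r+\|F\|_{\infty,T})$ and $\Cte_{\alpha,\beta,T}=\mathfrak c_{\alpha,\beta}(T^\alpha\vee 1)$, this is at most $\Cte_{\alpha,\beta,T}(\|F\|_{\infty,T}+r)\|w\|_{\beta,T}|t-s|^\beta$, and taking the supremum over $j^s\in J_{s,\alpha,r}(F,w)$ controls the first term in $d_\Haus$. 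The symmetric argument, comparing $j^t=\int_0^t f(u)\,dw(u)\in J_{t,\alpha,r}(F,w)$ with $\int_0^s f(u)\,dw(u)\in J_{s,\alpha,r}(F,w)$ coming from the same $f$, controls the second term by the same quantity, so the two-sided maximum defining the Hausdorff distance is bounded by $\Cte_{\alpha,\beta,T}(\|F\|_{\infty,T}+r)\|w\|_{\beta,T}|t-s|^\beta$, which is exactly the claimed $\beta$-Hölder estimate.

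I do not anticipate any serious obstacle: the statement is essentially a direct corollary of the Young--Love estimate applied uniformly over $\mathcal S_{\alpha,r}(F)$. The only points requiring a little care are the observation that a single selection $f$ simultaneously witnesses nearby points in $J_{s,\alpha,r}(F,w)$ and $J_{t,\alpha,r}(F,w)$ (so that the bound on $\|\int_s^t f\,dw\|$ applies verbatim on both sides of the Hausdorff maximum), and the elementary repackaging inequality $rT^\alpha+\|F\|_{\infty,T}\leqslant(T^\alpha\vee 1)(r+\|F\|_{\infty,T})$ used to absorb the constants into $\Cte_{\alpha,\beta,T}$.
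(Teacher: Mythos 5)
Your proposal is correct and follows essentially the same route as the paper: both proofs use the key observation that a single selection $f\in\mathcal S_{\alpha,r}(F)$ witnesses nearby points in $J_{s,\alpha,r}(F,w)$ and $J_{t,\alpha,r}(F,w)$ simultaneously, reducing the Hausdorff bound to the Young--Love estimate for $\left\|\int_s^t f(u)\,dw(u)\right\|$ with $\|f\|_{\alpha,T}\leqslant r$ and $\|f\|_{\infty,T}\leqslant\|F\|_{\infty,T}$. The only cosmetic difference is that the paper phrases this via an infimum over all selections whose second term is then annihilated by choosing the same $f$, whereas you pick that candidate directly, which is slightly cleaner.
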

%


%
\begin{proof}
Consider $s,t\in [0,T]$ with $s < t$, and $j_t\in J_{t,\alpha,r}(F,w)$. So, there exists $f_t\in\mathcal S_{\alpha,r}(F)$ such that
\begin{displaymath}
j_t =\int_{0}^{t}f_t(u)\,dw(u),
\end{displaymath}
and by Theorem \ref{Young_Love_estimate},
\begin{multline*}
 d(j_t,J_{s,\alpha,r}(F,w)) = 
 \inf_{f\in\mathcal S_{\alpha,r}(F)}
 \left\|\int_{0}^{t}f_t(u)\,dw(u) -\int_{0}^{s}f(u)\,dw(u)\right\|\\
 \begin{aligned}
 \leqslant &
 \left\|\int_{s}^{t}f_t(u)\,dw(u)\right\| +
 \inf_{f\in\mathcal S_{\alpha,r}(F)}
 \left\|\int_{0}^{s}(f - f_t)(u)\,dw(u)\right\|\\
 \leqslant &
 \mathfrak c_{\alpha,\beta}(T^{\alpha}\vee 1)\|w\|_{\beta,T}
 \left[N_{\alpha,T}(f_t)|t - s|^{\beta} +
 T^{\beta}\inf_{f\in\mathcal S_{\alpha,r}(F)}N_{\alpha,T}(f - f_t)
\right].
\end{aligned}
\end{multline*}
Since $f_t\in\mathcal S_{\alpha,r}(F)$, we have
$N_{\alpha,T}(f_t)\leqslant (\norm{F}_{\infty,T}+r)$
and the second term in the right-hand side of the previous inequality
is null.
Then,
\begin{displaymath}
d(j_t,J_{s,\alpha,r}(F,w))
\leqslant
\mathfrak c_{\alpha,\beta}(T^{\alpha}\vee 1)\|w\|_{\beta,T}
(\norm{F}_{\infty,T}+r)|t - s|^{\beta}
\end{displaymath}
and, by symmetry,
\begin{displaymath}
d(j_t,J_{s,\alpha,r}(F,w))
\leqslant
\mathfrak c_{\alpha,\beta}(T^{\alpha}\vee 1)\|w\|_{\beta,T}
(\norm{F}_{\infty,T}+r)|t - s|^{\beta}
\end{displaymath}
for every $j_t\in J_{t,\alpha,r}(F,w)$. Therefore,
\begin{multline*}
 d_\Haus(J_{s,\alpha,r}(F,w),J_{t,\alpha,r}(F,w))\\
 \begin{aligned}
 &=\max\left\{\sup_{j_s\in J_{s,\alpha,r}(F,w)}d(j_s,J_{t,\alpha,r}(F,w))\tq
 \sup_{j_t\in J_{t,\alpha,r}(F,w)}d(j_t,J_{s,\alpha,r}(F,w))\right\}\\
 &\leqslant
 \Cte_{\alpha,\beta,T}\|w\|_{\beta,T}(\norm{F}_{\infty,T}+r)|t - s|^{\beta}.
\end{aligned}
\end{multline*}
\end{proof}
%


%
\begin{corollary}[Upper bound for
$\norm{J_{.,\alpha,r}(F,w)}_{\alpha,T}$ and $N_{\alpha,T}(J_{.,\alpha,r}(F,w))$]
\label{cor:alpha-norm_bound}
Assume that $r\geqslant\rmin$, and let
\begin{displaymath}
\rho_w(T,r,\|F\|_{\infty,T}) :=
\Cte_{\alpha,\beta,T}(\norm{F}_{\infty,T}+r)\|w\|_{\beta,T}T^{\beta -\alpha}.
\end{displaymath}
Then
\begin{equation*}
\|J_{.,\alpha,r}(F,w)\|_{\alpha,T}
\leqslant
\rho_w(T,r,\|F\|_{\infty,T})
\end{equation*}
and
\begin{equation*}
N_{\alpha,T}(J_{.,\alpha,r}(F,w))
\leqslant
(1+T^\alpha)\rho_w(T,r,\|F\|_{\infty,T}).
\end{equation*}
\end{corollary}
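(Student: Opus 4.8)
The plan is to deduce both estimates directly from the $\beta$-H\"older continuity of the indefinite integral established in Proposition \ref{Holder_continuity_Aumann_Young_integral}. Write $C_\beta := \Cte_{\alpha,\beta,T}(\norm{F}_{\infty,T}+r)\|w\|_{\beta,T}$ for the H\"older constant furnished there, so that $d_\Haus(J_{s,\alpha,r}(F,w),J_{t,\alpha,r}(F,w))\leqslant C_\beta|t-s|^\beta$ for all $s,t\in[0,T]$, and observe that $\rho_w(T,r)=C_\beta\,T^{\beta-\alpha}$. The whole point is to convert this $\beta$-H\"older bound into an $\alpha$-H\"older seminorm bound and a sup-norm bound, exploiting that the time interval $[0,T]$ is bounded.

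First I would bound the seminorm. For $s<t$ in $[0,T]$ I factor $|t-s|^\beta=|t-s|^\alpha\,|t-s|^{\beta-\alpha}$ and use $|t-s|\leqslant T$; since $\beta\geqslant\alpha$ the map $x\mapsto x^{\beta-\alpha}$ is nondecreasing on $[0,T]$, hence $|t-s|^{\beta-\alpha}\leqslant T^{\beta-\alpha}$. Dividing the H\"older estimate by $|t-s|^\alpha$ and taking the supremum over $s<t$ yields $\|J_{.,\alpha,r}(F,w)\|_{\alpha,T}\leqslant C_\beta T^{\beta-\alpha}=\rho_w(T,r)$, which is the first claim.

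Next I would bound the sup-norm. Because the indefinite integral over the degenerate interval reduces to $J_{0,\alpha,r}(F,w)=\{0_{\R^e}\}$, the same estimate with $s=0$ gives, for every $t\in[0,T]$, $\|J_{t,\alpha,r}(F,w)\|_{d_\Haus}=d_\Haus(J_{t,\alpha,r}(F,w),J_{0,\alpha,r}(F,w))\leqslant C_\beta t^\beta\leqslant C_\beta T^\beta=T^\alpha\rho_w(T,r)$. Taking the supremum over $t$ gives $\|J_{.,\alpha,r}(F,w)\|_{\infty,T}\leqslant T^\alpha\rho_w(T,r)$, and adding the two bounds produces $N_{\alpha,T}(J_{.,\alpha,r}(F,w))=\|J_{.,\alpha,r}(F,w)\|_{\infty,T}+\|J_{.,\alpha,r}(F,w)\|_{\alpha,T}\leqslant(1+T^\alpha)\rho_w(T,r)$, the second claim.

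The only genuine subtlety, and the step I expect to be the crux, is the passage from $\beta$-H\"older to $\alpha$-H\"older regularity: it forces $\beta\geqslant\alpha$, since otherwise $\sup_{s<t}|t-s|^{\beta-\alpha}=+\infty$, and, more fundamentally, the Young integral $t\mapsto\int_0^t f\,dw$ is only $\beta$-H\"older and genuinely rougher than order $\alpha$ when $\beta<\alpha$, so that $\|J_{.,\alpha,r}(F,w)\|_{\alpha,T}$ could be infinite. Thus the estimate is to be read in the regime $\alpha\leqslant\beta$; everything else is a routine rewriting of the estimate of Proposition \ref{Holder_continuity_Aumann_Young_integral}.
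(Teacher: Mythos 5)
Your proof is correct and takes essentially the same route as the paper's: both convert the $\beta$-H\"older estimate of Proposition \ref{Holder_continuity_Aumann_Young_integral} into the $\alpha$-H\"older seminorm bound via $|t-s|^{\beta-\alpha}\leqslant T^{\beta-\alpha}$ (which, as you rightly stress and as the paper assumes, requires $\alpha\leqslant\beta$), and both obtain the sup-norm bound from the fact that $J_{0,\alpha,r}(F,w)=\{0_{\R^e}\}$ before summing the two bounds to control $N_{\alpha,T}$.
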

%


%
\begin{proof}
Since $\alpha <\beta$, the first inequality
is an immediate consequence of
Proposition \ref{Holder_continuity_Aumann_Young_integral}
and the obvious inequality
$\norm{F}_{\alpha,T}\leqslant T^{\beta -\alpha}\norm{F}_{\beta,T}$ for any
$F\in\C^{\beta\textrm{-H\"ol}}([0,T];$\allowbreak$\cc(\R^e))$.
The second inequality follows, using that $\int_0^0f(s)\,dw(s)=0$ for
any $f\in\mathcal S_{\alpha,r}(F)$.
\end{proof}
%
%


%
\section{Existence of fixed-points for functionals of Aumann-Young's integral and applications to differential inclusions}\label{section_fixed_points}
The main theorem of this section, derived from
Kakutani-Fan-Glicksberg's theorem thanks to the results of Section
\ref{section_set_valued_Young_integral}, deals with the existence of
fixed-points for functionals of the Aumann-Young integral. We provide
some applications to differential inclusions, especially differential
inclusions driven by a fractional Brownian motion.

In the sequel,
$\alpha,\beta\in]0,1[$, $\alpha +\beta > 1$ and $\alpha <\beta$. This implies that $\beta >\frac{1}{2}.$ As usual, $T>0$ and $w\in \C^{\beta\textrm{-H\"ol}}([0,T];\mathbb R^d)$. 
%


%
\subsection{Fixed point theorem}
Let us first recall the notions of fixed-points and of upper semicontinuity
for multifunctions. 
%


%
\begin{definition}\label{fixed_point}
Consider a set $S$ and a multifunction $F : S\rightrightarrows S$. An element $x$ of $S$ is a \emph{fixed-point} of $F$ if $x\in F(x)$.
\end{definition}
We now give a definition of upper semicontinuity for multifunctions
in a particular case
which is sufficient for our needs 
(see \cite[Definition 1.4.1 and Proposition 1.4.8]{aubin-frankowska}).
%


%
\begin{definition}\label{upper_hemicontinuity}
Let $\mathbb X$ be a metric space, let $\mathbb Y$ be a compact metric
space.
Let $F :\mathbb X\rightrightarrows\mathbb Y$ be a multifunction with
closed values, and let $S$ be a closed subset of $\mathbb{X}$, with
$S\subset\Domain{F}:=\{x\in\mathbb{X}\tq F(x)\not=\emptyset\}$.
The multifunction $F$ is said to be
\emph{upper semicontinuous} on $S$ if,
for every sequence $(x_n,y_n)_{n\in\mathbb N}$ of elements of
$S\times\mathbb{Y}$ and for every  $(x,y)\in S\times\mathbb{Y}$
such that $(x_n,y_n)$ converges to $(x,y)$ and
$y_n\in F(x_n)$ for every $n$,
we have $y\in F(x)$.
\end{definition}
Let us now set the scene for the fixed point theorem. 
Let $S$ be a convex compact subset of
$\C^{\alpha\normalfont{\textrm{-H\"ol}}}([0,T];\R^e)$,
and let $\Phi : [0,T]\times\mathbb R^e\rightarrow
\cc(\matrl))$ be continuous for the Hausdorff distance,
with $\ell\in\mathbb N^*$.
Assume that there exists $R>0$ such that
\begin{equation*}
\Phi(.,x(.))\in
\overline B_{\alpha,\cc(M_{\ell,d}(\R))}(0,R)
\tq
\forall x\in S.
\end{equation*}
Let
\[r\geqslant 
\sup_{x\in S}\min\Bigl(\StLip{e\ell}\norm{\Phi(.,x(.))}_{\alpha,T},\,
\norm{\Phi(.,x(.))}_{\alpha,T,\Dem}\Bigr),\]
so that the map
\begin{displaymath}
\Phi_w :\,
\left\{\begin{array}{lcl}
S & \longrightarrow & \C^{\alpha\normalfont{\textrm{-H\"ol}}}([0,T];\cc(\R^\ell))\\
x & \longmapsto &\displaystyle{\Aint{\alpha,r}_0^{.}\Phi(s,x(s))\,dw(s)}
\end{array}\right.
\end{displaymath}
is well-defined.
With the notations of Corollary \ref{cor:alpha-norm_bound}, we have
\begin{equation*}
\norm{\Phi_w(x)}_{\alpha,T}
\leqslant
\rho_w(T,r,\|\Phi(.,x(.))\|_{\infty,T})
\leqslant
\rho_w(T,r,R)
\end{equation*}
for all $x\in S$. 
%


%
\begin{theorem}[Fixed point theorem]\label{existence_fixed_points}
Let $S$, $\Phi$, $r$ and $\Phi_w$ as above,
let $\alpha'\in ]0,\alpha[$, and let
\[\Psi :\C^{\alpha\normalfont{\textrm{-H\"ol}}}([0,T];\cc(\R^\ell))
\rightrightarrows
\C^{\alpha\normalfont{\textrm{-H\"ol}}}([0,T];\mathbb R^e),\]
a multifunction such that $\Domain{\Psi}$ contains
$\B:=\{\Phi_w(x)\tq x\in S\}$,
and which satisfies the following conditions:
\begin{enumerate}
 \item For every sequence $(F_n)_{n\in\mathbb N}$ of
 elements of $\B$ such that 
 there exists $F\in\B$ satisfying
 \begin{displaymath}
 \Ls_{n\rightarrow\infty}F_n(t)\subset F(t)
 \tq
 \forall t\in [0,T],
 \end{displaymath}
 if $\psi_n\in\Psi(F_n)$ converges in $\C^{\alpha'\normalfont{\textrm{-H\"ol}}}([0,T];\mathbb R^e)$ to $\psi\in \C^{\alpha\normalfont{\textrm{-H\"ol}}}([0,T];\mathbb R^e)$, then $\psi\in\Psi(F)$.
 \item For every $F\in\B$, $\Psi(F)$ is convex, closed and contained in $S$.
\end{enumerate}
Then, $\Gamma =\Psi\circ\Phi_w$ has at least one fixed-point in $S$.
\end{theorem}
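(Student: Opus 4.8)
The plan is to deduce this from the Kakutani--Fan--Glicksberg fixed-point theorem, so the entire proof amounts to verifying the hypotheses of that theorem for the composition $\Gamma=\Psi\circ\Phi_w$ acting on the convex compact set $S$. Recall that Kakutani--Fan--Glicksberg requires: (i) $S$ a nonempty convex compact subset of a locally convex topological vector space; (ii) $\Gamma:S\rightrightarrows S$ with nonempty convex closed values; and (iii) $\Gamma$ upper semicontinuous (equivalently, with closed graph, since the target is compact). Since $S$ is given to be convex and compact in $\C^{\alpha\textrm{-H\"ol}}([0,T];\R^e)$, condition (i) holds. I would organize the proof as: first argue that $\Gamma$ maps $S$ into itself with convex closed values, and then argue the closed-graph/upper-semicontinuity property.

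First I would check that $\Gamma(x)\subset S$ with convex closed values for every $x\in S$. By hypothesis (2) on $\Psi$, for every $F\in\B$ the set $\Psi(F)$ is convex, closed and contained in $S$; since $\Phi_w(x)\in\B$ by the very definition of $\B$, and $\Domain\Psi\supset\B$, we get that $\Gamma(x)=\Psi(\Phi_w(x))$ is a nonempty convex closed subset of $S$. So the values are as required.

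The crux of the proof, and the step I expect to be the main obstacle, is establishing the closed-graph property of $\Gamma$ with respect to the $\C^{\alpha}$-topology on $S$, because $\Phi_w$ is built from a set-valued integral whose continuity is only Kuratowski-type and only through a compactness argument in a weaker H\"older norm. The strategy is to fix a sequence $x_n\to x$ in $S$ with $\psi_n\in\Gamma(x_n)$ converging to some $\psi$ in $\C^{\alpha}$, and to show $\psi\in\Gamma(x)$. Writing $F_n:=\Phi_w(x_n)$ and $F:=\Phi_w(x)$, the plan is: (a) use continuity of $\Phi$ for $d_\Haus$ together with the uniform $\alpha$-H\"older bound $\Phi(\cdot,x_n(\cdot))\in\overline B_{\alpha,\cc}(0,R)$ to show that the integrand multifunctions $\Phi(\cdot,x_n(\cdot))$ converge suitably to $\Phi(\cdot,x(\cdot))$, in particular that $\Ls_{n\to\infty}\Phi(t,x_n(t))\subset\Phi(t,x(t))$ for each $t$; (b) invoke Part 1 of Proposition \ref{continuity_set_valued_Young_integral} (the upper-semicontinuity of the Aumann--Young integral with respect to $F$) to deduce $\Ls_{n\to\infty}F_n(t)\subset F(t)$ pointwise; (c) apply the semicontinuity hypothesis (1) on $\Psi$: since $\psi_n\in\Psi(F_n)$ converges in $\C^{\alpha'}$ (with $\alpha'<\alpha$, using that $\C^{\alpha}$-convergence implies $\C^{\alpha'}$-convergence) to $\psi\in\C^{\alpha}$, hypothesis (1) gives $\psi\in\Psi(F)=\Gamma(x)$.

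I would pay careful attention in step (a) to the passage from continuity of $\Phi$ and convergence $x_n\to x$ to the pointwise upper limit inclusion for the integrand sets, and in step (b) to matching the hypotheses of Proposition \ref{continuity_set_valued_Young_integral} exactly, namely the uniform radius bound $F_n\in\overline B_{\alpha,\cc}(0,r+\varepsilon_n)$ with $\varepsilon_n\to 0$, which here follows from the uniform bound $\norm{\Phi_w(x_n)}_{\alpha,T}\leqslant\rho_w(T,r)$ recorded just before the theorem (so one may even take $\varepsilon_n=0$). Once the closed-graph property is in hand, upper semicontinuity on the compact $S$ follows (by the equivalence noted in Definition \ref{upper_hemicontinuity}), and Kakutani--Fan--Glicksberg then yields a point $x\in S$ with $x\in\Gamma(x)=\Psi(\Phi_w(x))$, which is the desired fixed point.
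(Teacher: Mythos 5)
Your proposal follows essentially the same route as the paper's proof: the values of $\Gamma$ are handled by Condition 2; upper semicontinuity is obtained by combining the Hausdorff continuity of $\Phi$ (giving $\Ls_{n\rightarrow\infty}\Phi(t,x_n(t))\subset\Phi(t,x(t))$ for every $t$), Part 1 of Proposition \ref{continuity_set_valued_Young_integral} (via Remark \ref{rem-ppte-T}) to push this upper limit through the Aumann--Young integral, and then Condition 1; finally Kakutani--Fan--Glicksberg is invoked. The one substantive divergence is the topology in which you check compactness and the closed graph. You read ``convex compact subset of $\C^{\alpha\textrm{-H\"ol}}([0,T];\R^e)$'' as compactness for the norm $N_{\alpha,T}$ and apply Kakutani--Fan--Glicksberg in $\C^{\alpha\textrm{-H\"ol}}$ itself, whereas the paper works throughout in $\mathbb X:=\C^{\alpha'\textrm{-H\"ol}}([0,T];\R^e)$: the sequences $(x_n)$ and $(\psi_n)$ converge in $\mathbb X$, the closed graph is established in $\mathbb X$, and the fixed-point theorem is applied with $S$ compact in $\mathbb X$. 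This is not cosmetic: in the paper's applications (Example \ref{example_Psi_1} and the two corollaries), $S$ is a closed ball of $\C^{\alpha\textrm{-H\"ol}}$ (with a pinned initial value), which is compact for the $\alpha'$-H\"older topology by \cite[Proposition 5.28]{FV10} but is \emph{not} compact for $N_{\alpha,T}$, since closed balls of an infinite-dimensional Banach space are never norm-compact. So under the reading that makes the theorem usable, your application of Kakutani--Fan--Glicksberg fails at the compactness hypothesis. The repair is immediate and is exactly the paper's proof: take $x_n\rightarrow x$ and $\psi_n\rightarrow\psi$ in $\C^{\alpha'\textrm{-H\"ol}}$; your steps (a)--(c) use nothing beyond uniform convergence of $(x_n)$ (to get the pointwise upper limit of the integrands) and $\C^{\alpha'\textrm{-H\"ol}}$ convergence of $(\psi_n)$ (to apply Condition 1), so the identical argument yields the closed graph, hence upper semicontinuity on the $\mathbb X$-compact set $S$ in the sense of Definition \ref{upper_hemicontinuity}.

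A second, smaller point: in your step (b) the ball hypothesis of Proposition \ref{continuity_set_valued_Young_integral} bears on the \emph{integrands} $\Phi(\cdot,x_n(\cdot))$, not on the integrals; the bound you cite, $\norm{\Phi_w(x_n)}_{\alpha,T}\leqslant\rho_w(T,r)$, concerns $\Phi_w(x_n)$ and is therefore the wrong object. What is needed (and available) is the standing assumption $\Phi(\cdot,y(\cdot))\in\overline B_{\alpha,\cc(\matrl)}(0,R)$ for all $y\in S$, which provides the uniform control on the integrands, together with the fact that the selections defining $\Phi_w(x_n)$ lie in $\mathcal S_{\alpha,r}(\Phi(\cdot,x_n(\cdot)))$. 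The paper itself is terse on this verification, so this is a slip of citation rather than a missing idea, but it should be stated correctly.
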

%


%
\begin{remark}\label{rem:DomPsi}
By Corollary \ref{cor:alpha-norm_bound},
in order that $\Domain(\Psi)\supset\B$, it is sufficient that
$\Domain(\Psi)\supset\overline B_{\alpha,\cc(M_{\ell,d}(\mathbb R))}(0,(1+T^\alpha)\rho_w(T,r,R))$.
\end{remark}
%


%
\begin{proof}[Proof of Theorem \ref{existence_fixed_points}]
By Condition 2, $\Gamma(x)$ is closed convex and contained in $S$ for every $x\in S$.

Let us check that $\Gamma$ is upper semicontinuous.
Let $(x_n)_{n\in\mathbb N}$ be a sequence of elements of $S$
converging to $x\in S$ in $\mathbb X :=\C^{\alpha'\textrm{-H\"ol}}([0,T];\mathbb R^e)$, and consider $\psi_n\in\Gamma(x_n) =\Psi(\Phi_w(x_n))$ converging to $\psi\in\C^{\alpha\textrm{-H\"ol}}([0,T];\mathbb R^e)$ in $\mathbb X$. By Proposition \ref{continuity_set_valued_Young_integral} (more precisely Remark \ref{rem-ppte-T}) and the Hausdorff continuity of $\Phi$ (which gives
\begin{displaymath}
\Ls_{n\rightarrow\infty}\Phi(t,x_n(t))\subset\Phi((t,x(t))
\tq
\forall t\in [0,T])),
\end{displaymath}
we have
\begin{displaymath}
\Ls_{n\rightarrow\infty}
\Phi_w(x_n)(t)\subset\Phi_w(x)(t)
\tq\forall t\in [0,T].
\end{displaymath}
Then, by Condition 1, $\psi\in\Psi(\Phi_w(x)) =\Gamma(x)$, which proves the upper semicontinuity.

We deduce by Kakutani-Fan-Glicksberg Theorem \cite[Theorem 8.6 of
II.§7]{GDfixed} that $\Gamma$ has at least one fixed-point in $S$.
\end{proof}
%


\begin{remark}\label{rem:refined_fixed_point_th}
Consider $\gamma\in ]0,1\wedge(\beta/\alpha)]$ with 
$\alpha\gamma+\beta > 1$.
The statement of Theorem \ref{existence_fixed_points} remains true when
\begin{equation*}
\Phi(.,x(.))\in
\overline B_{\alpha\gamma,\cc(M_{\ell,d}(\R))}(0,R)
\tq
\forall x\in S
\end{equation*}
and
\begin{displaymath}
\Phi_w(x) := \Aint{\alpha\gamma,r}_0^{.}\Phi(s,x(s))\,dw(s).
\end{displaymath}
For the sake of readability, this result has been detailed in the case $\gamma
= 1$, but the proof of Theorem \ref{existence_fixed_points} remains
unchanged for $\gamma\not=1$,
again with
$S\subset\C^{\alpha\normalfont{\textrm{-H\"ol}}}([0,T];\R^e)$, but 
replacing $\alpha$ by $\alpha\gamma$ everywhere else.

\end{remark}

%
Let us provide two examples of multifunctions $\Psi$ fulfilling
Conditions 1 and 2 of Theorem \ref{existence_fixed_points}. These
examples will be the basis for our applications to differential
inclusions. 
%


%
\begin{example}\label{example_Psi_1}
Assume that $T$ satisfies $\rho_w(T,r,R)\leqslant\mathfrak r$ with
$\mathfrak r > 0$. With the notations of Theorem
\ref{existence_fixed_points}, let us show that $\Psi_h(.) := h
+\{x\in\mathcal S_{\alpha,\rho_w(T,r,R)}(\cdot) : x(0) = 0\}$, with
$\ell = e$ and $h\in
\C^{\alpha\normalfont{\textrm{-H\"ol}}}([0,T];\mathbb R^e)$, fulfills
Conditions 1 and 2 for $S = S_{h,\mathfrak r} :=\{x\in\overline
B_{\alpha,\mathbb R^e}(h,\mathfrak r) : x(0) = h(0)\}$ which is a
convex compact subset of
$\mathbb X := \C^{\alpha'\textrm{-H\"ol}}([0,T];\mathbb R^e)$ for
$0<\alpha'<\alpha$, thanks
to 
Proposition \ref{prop:compactnessHolder}.

1. Let $(F_n)_{n\in\mathbb N}$ be a sequence in
$\overline B_{\alpha,\ccm}(0,(1+T^\alpha)\rho_w(T,r,R))$
such that there exists $F\in\overline B_{\alpha,\ccm}(0,(1+T^\alpha)\rho_w(T,r,R))$
satisfying
\begin{equation}\label{existence_fixed_points_1}
\Ls_{n\rightarrow\infty}F_n(t)\subset F(t)
\tq
\forall t\in [0,T].
\end{equation}
Consider also $\psi_n\in\Psi_h(F_n)$ converging in $\mathbb X$ to
$\psi\in \C^{\alpha\normalfont{\textrm{-H\"ol}}}([0,T];\mathbb
R^e)$. For any $n\in\mathbb N$, $(\psi_n - h)(0) = 0$ and, by the
definition of $\mathcal S_{\alpha,\rho_w(T,r,R)}(F_n)$, $\psi_n -
h\in\mathcal S^0(F_n)$ and $N_{\alpha,T}(\psi_n -
h)\leqslant\rho_w(T,r,R)$. Thanks to (\ref{existence_fixed_points_1}),
$\psi - h\in\mathcal S^0(F)$, and since $\psi$ is the limit of
$\psi_n$ in $\mathbb X$, $(\psi - h)(0) = 0$ and $N_{\alpha,T}(\psi -
h)\leqslant\rho_w(T,r,R)$. Therefore, $\psi\in\Psi_h(F)$.

2. For any $F\in\overline B_{\alpha,\ccm}(0,(1+T^\alpha)\rho_w(T,r,R))$,
since $\mathcal S_{\alpha,\rho_w(T,r,R)}(F)$ is convex (resp.~closed) by Proposition \ref{prop:basic} (resp.~Proposition \ref{basic_topological_properties_selections_set_Holder}), $\Psi_h(F)$ is convex (resp.~closed). Moreover, since $\rho_w(T,r,R)\leqslant\mathfrak r$,
\begin{align*}
 \Psi_h(F) & \subset
 h +\mathcal S^0(F)\cap\{x\in\overline B_{\alpha,\mathbb R^e}(0,(1+T^\alpha)\rho_w(T,r,R)) : x(0) = 0\}\\
 & \subset 
 h +\{x\in\overline B_{\alpha,\mathbb R^e}(0,\mathfrak r) : x(0) = 0\} = S_{h,\mathfrak r}.
\end{align*}
\end{example}
%


%
\begin{example}\label{example_Psi_2}
Assume that $d =\ell = 2$, $e = 1$ and that $T$ satisfies
$\rho_w(T,r,R)\leqslant\mathfrak r$ with $\mathfrak r > 0$. Consider
$f\in \C^{\alpha\normalfont{\textrm{-H\"ol}}}([0,T];\mathbb R)$. With
the notations of Theorem \ref{existence_fixed_points} and Example
\ref{example_Psi_1}, let us show that $\Psi_{h,f}(.) := h +\{fx_1 -
x_2\tq x = (x_1,x_2)\in\Psi_0(.)\}$ fulfills Conditions 1 and 2 of Theorem \ref{existence_fixed_points}
for $S = S_{h,f,\mathfrak r} := h +\{fx_1 - x_2\tq x = (x_1,x_2)\in S_{0,\mathfrak r}\}$:

1. Let $(F_n)_{n\in\mathbb N}$ be a sequence in $\overline B_{\alpha,\cc(M_{1,2}(\mathbb R))}(0,(1+T^\alpha)\rho_w(T,r,R))$,
and let
$F\in\overline B_{\alpha,\cc(M_{1,2}(\mathbb R))}(0,(1+T^\alpha)\rho_w(T,r,R))$
satisfying
\begin{equation}\label{existence_fixed_points_2}
\Ls_{n\rightarrow\infty}F_n(t)\subset F(t)
\tq
\forall t\in [0,T].
\end{equation}
Consider also $\psi_n\in\Psi_{h,f}(F_n)$ converging in $\mathbb X$ to
$\psi\in \C^{\alpha\normalfont{\textrm{-H\"ol}}}([0,T];\mathbb
R)$. For any $n\in\mathbb N$, $\psi_n = h + fx_{1,n} - x_{2,n}$ with
$x_n = (x_{1,n},x_{2,n})\in\mathcal S^0(F_n)$ such that $x_n(0) = 0$
and $N_{\alpha,T}(x_n)\leqslant\rho_w(T,r,R)$. Then,
by Proposition \ref{prop:compactnessHolder},
there exists a subsequence $(x_{n_k})_{k\in\mathbb N}$ of $(x_n)_{n\in\mathbb N}$ converging in $\mathbb X$ to $x = (x_1,x_2)\in \C^{\alpha\normalfont{\textrm{-H\"ol}}}([0,T];\mathbb R)$ such that $x(0) = 0$ and $N_{\alpha,T}(x)\leqslant\rho_w(T,r,R)$. Moreover, thanks to (\ref{existence_fixed_points_2}), $x\in\mathcal S^0(F)$. So, for every $t\in [0,T]$,
\begin{align*}
 \psi(t) & = 
 \lim_{n\rightarrow\infty}\psi_n(t) =
 \lim_{k\rightarrow\infty}\psi_{n_k}(t)\\
 & = 
 h(t) +
 f(t)\lim_{k\rightarrow\infty}x_{1,n_k}(t)
 -\lim_{k\rightarrow\infty}x_{2,n_k}(t) \\
 & = h(t) + f(t)x_1(t) - x_2(t).
\end{align*}
Therefore, $\psi\in\Psi_{h,f}(F)$.

2. For any $F\in\overline B_{\alpha,\cc(M_{1,2}(\mathbb R))}(0,(1+T^\alpha)\rho_w(T,r,R))$, since $\Psi_0(F)$ is convex (resp.~$\Psi_0(F)\subset S_{0,\mathfrak r}$), $\Psi_{h,f}(F)$ is convex (resp.~$\Psi_{h,f}(F)\subset S_{h,f,\mathfrak r}$). Moreover, the same arguments than in the previous step yield that $\Psi_{h,f}(F)$ is closed.
\end{example}
%


%
\subsection{Applications to differential inclusions}
Let us provide two applications of Theorem
\ref{existence_fixed_points} to differential inclusions. First, let
$\Phi :\,[0,T]\times\R^e\rightarrow\cc(M_{e,d}(\mathbb R))$ be a
multifunction such that, for every $t\in[0,T]$ and every $x\in\R^e$,
$\Phi(.,x)$ is $\alpha$-H\"older continuous with respect to the Hausdorff distance and $\Phi(t,.)$ is Lipschitz continuous with respect to the Hausdorff distance too,
that is,
there exist $k_1,k_2 > 0$ such that for every $s,t\in[0,T]$ and $x,y\in\R^e$,
\begin{equation}\label{assumption_Phi_1}
d_\Haus(\Phi(s,x),\Phi(t,x))\leqslant k_1|t - s|^\alpha
\quad\textrm{ and }\quad
d_\Haus(\Phi(t,x),\Phi(t,y))\leqslant k_2\|x - y\|.
\end{equation}
Assume also that $\Phi$ is bounded with respect to the Hausdorff distance, that is, there exists $R > 0$ such that
\begin{equation}\label{assumption_Phi_2}
\sup_{(t,x)\in [0,T]\times\R^e}\sup_{y\in\Phi(t,x)}\|y\|\leqslant R.
\end{equation}
Consider $w\in \C^{\beta\textrm{-H\"ol}}([0,T];\mathbb R^d)$ and an inclusion of the form
\begin{equation}\label{first_order_inclusion}
x(t)\in\xi +\Aint{\alpha,r}_0^t\Phi(s,x(s))\,dw(s)
\tq
t\in [0,T],
\end{equation}
where $r$ is large enough and the unknown function $x$ is in $\C^{\alpha\textrm{-H\"ol}}([0,T];\mathbb R^e)$.
%


%
\begin{corollary}[First order differential inclusion]
\label{existence_first_order_inclusion}
Assume that $0 <\alpha <\beta$, $\alpha +\beta >1$ and $r\geqslant r_0 := R + k_1 + k_2$. Then, the set of solutions to \eqref{first_order_inclusion} is nonempty. 
\end{corollary}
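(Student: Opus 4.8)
The plan is to recast \eqref{first_order_inclusion} as a fixed-point problem for the functional $\Gamma=\Psi_\xi\circ\Phi_w$ of Theorem \ref{existence_fixed_points}, where $\Phi_w(x)=\Aint{\alpha,r}_0^{\cdot}\Phi(s,x(s))\,dw(s)$ and $\Psi_\xi$ is the multifunction of Example \ref{example_Psi_1}, taken with $\ell=e$, with $h=\xi$ viewed as the constant function $t\mapsto\xi$, and with radius $\mathfrak r=\rho_w(T,r)$. The key point is that, by the very definition of $\Psi_\xi$, a fixed point $x\in\Gamma(x)=\Psi_\xi(\Phi_w(x))$ is exactly a map with $x(0)=\xi$ such that $x-\xi$ is a selection of the indefinite integral $t\mapsto\Phi_w(x)(t)$, that is, $x(t)-\xi\in\Aint{\alpha,r}_0^{t}\Phi(s,x(s))\,dw(s)$ for every $t\in[0,T]$; this is precisely \eqref{first_order_inclusion}. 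So it suffices to verify that the data $\Phi$, $r$ and $S=S_{\xi,\mathfrak r}$ meet the standing assumptions preceding Theorem \ref{existence_fixed_points}, since Example \ref{example_Psi_1} already establishes that $\Psi_\xi$ fulfills Conditions 1 and 2 and that $S$ is a convex compact subset of $\C^{\alpha'\textrm{-H\"ol}}([0,T];\R^e)$.

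Next I would record the two a priori estimates on the composed multifunction $\Phi(\cdot,x(\cdot))$ for $x\in S$. The boundedness assumption \eqref{assumption_Phi_2} gives $\norm{\Phi(\cdot,x(\cdot))}_{\infty,T}\leqslant R$, while the joint Hölder/Lipschitz assumption \eqref{assumption_Phi_1}, together with the triangle inequality for $d_\Haus$, yields $d_\Haus(\Phi(s,x(s)),\Phi(t,x(t)))\leqslant k_1|t-s|^\alpha+k_2\norm{x(t)-x(s)}$, whence $\norm{\Phi(\cdot,x(\cdot))}_{\alpha,T}\leqslant k_1+k_2\norm{x}_{\alpha,T}$. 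In particular $\Phi(\cdot,x(\cdot))\in\C^{\alpha\textrm{-H\"ol}}([0,T];\ccm)$, and $\Phi$ is continuous for $d_\Haus$; thus $\Phi_w$ has the required form provided the tuning parameter dominates $\rmin(\Phi(\cdot,x(\cdot)))$ uniformly on $S$ (so that the selection set is nonempty by Proposition \ref{prop:basic}).

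The crux is to close the loop between the radius of $S$, the Hölder size of $\Phi(\cdot,x(\cdot))$ and the tuning parameter $r$. Since $\xi$ is constant, every $x\in S=S_{\xi,\mathfrak r}$ satisfies $\norm{x}_{\alpha,T}=\norm{x-\xi}_{\alpha,T}\leqslant\mathfrak r=\rho_w(T,r)$, which by the previous paragraph controls $\norm{\Phi(\cdot,x(\cdot))}_{\alpha,T}$, hence, through the Steiner estimate of Proposition \ref{prop:steinerlip} and the definition \eqref{eq:rmin} of $\rmin$, the quantity $\min(\StLip{ed}\norm{\Phi(\cdot,x(\cdot))}_{\alpha,T},\norm{\Phi(\cdot,x(\cdot))}_{\alpha,T,\Dem})$ governing the well-posedness of $\Phi_w$. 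I would then check that $r\geqslant r_0=R+k_1+k_2$ makes this uniform bound hold on $S$ and, simultaneously (with the choice $\mathfrak r=\rho_w(T,r)$ and Corollary \ref{cor:alpha-norm_bound} via Remark \ref{rem:DomPsi}), guarantees the self-map property $\Gamma(S)\subset S$. Reconciling the a priori estimate $\norm{x}_{\alpha,T}\leqslant\rho_w(T,r)$ with the requirement $r\geqslant\rmin(\Phi(\cdot,x(\cdot)))$ without circularity is the main obstacle and the only genuinely quantitative step; everything topological (upper semicontinuity of $\Gamma$) is already packaged into Example \ref{example_Psi_1} and Proposition \ref{continuity_set_valued_Young_integral}. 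Should the bookkeeping of constants fail to close globally with $r_0$ as stated, the $\gamma$-variant of Remark \ref{rem:refined_fixed_point_th} provides the flexibility to absorb the discrepancy.

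Finally, with the standing assumptions verified, Theorem \ref{existence_fixed_points} produces a fixed point $x\in S$ of $\Gamma$, which by the first paragraph is a solution of \eqref{first_order_inclusion}; this yields the nonemptiness of the solution set and concludes the proof.
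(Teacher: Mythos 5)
Your reduction of \eqref{first_order_inclusion} to a fixed point of $\Gamma=\Psi_\xi\circ\Phi_w$, and your two estimates $\norm{\Phi(\cdot,x(\cdot))}_{\infty,T}\leqslant R$ and $\norm{\Phi(\cdot,x(\cdot))}_{\alpha,T}\leqslant k_1+k_2\norm{x}_{\alpha,T}$, are exactly the paper's starting point. But the step you yourself call ``the main obstacle'' is a genuine gap, and nothing in your proposal fills it. With your choice of radius $\mathfrak r=\rho_w(T,r)$ for $S$, the only available bound is $\norm{\Phi(\cdot,x(\cdot))}_{\alpha,T}\leqslant k_1+k_2\rho_w(T,r)$, and the admissibility requirement on the tuning parameter becomes (since only Hausdorff regularity of $\Phi$ is assumed, the Demyanov term is not controlled) $r\geqslant \StLip{ed}\bigl(k_1+k_2\rho_w(T,r)\bigr)$. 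As $\rho_w(T,r)$ is of order $\Cte_{\alpha,\beta,T}(R+r)\norm{w}_{\beta,T}T^{\beta-\alpha}$, i.e.\ grows linearly in $r$ with a slope proportional to $\norm{w}_{\beta,T}T^{\beta-\alpha}$, this inequality has no solution $r$ at all when $T$ or $\norm{w}_{\beta,T}$ is large, let alone the specific value $r_0=R+k_1+k_2$ prescribed by the statement; the same obstruction hits the self-map property, which via Remark \ref{rem:DomPsi} needs $(1+T^\alpha)\rho_w(T,r)\leqslant\mathfrak r$. Your fallback, Remark \ref{rem:refined_fixed_point_th}, is irrelevant to this issue: it weakens the Lipschitz dependence of $\Phi(t,\cdot)$ in the space variable to $\gamma$-H\"older continuity, and does nothing to break the quantitative circularity.

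The paper closes the loop with two ideas absent from your proposal: a ball of \emph{fixed} radius and a \emph{localization--gluing} argument. Take $S=S_{\xi,1}$, the ball of radius $\mathfrak r=1$; then $\norm{x-\xi}_{\alpha,T}\leqslant 1$ for all $x\in S$, so $N_{\alpha,T}(\Phi(\cdot,x(\cdot)))\leqslant R+k_1+k_2=r_0$ uniformly on $S$ and \emph{independently of $T$ and $w$}, which makes $r\geqslant r_0$ an admissible tuning parameter. The self-map condition $(1+T_0^\alpha)\rho_w(T_0,r_0)\leqslant 1$ cannot be forced on $[0,T]$, but it does hold on a sufficiently small interval $[0,T_0]$, because $\rho_w(T_0,r_0)\rightarrow 0$ as $T_0\rightarrow 0$ (here $\alpha<\beta$ is used). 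Theorem \ref{existence_fixed_points} then gives a fixed point on $[0,T_0]$; and since $T_0$ depends only on $\alpha,\beta,R,k_1,k_2$ and $\norm{w}_{\beta,T}$, not on the initial value $\xi$, one glues solutions on the successive intervals $[T_0,2T_0],[2T_0,3T_0],\dots$ to obtain a solution of \eqref{first_order_inclusion} on all of $[0,T]$. Without the fixed radius, the smallness of $T_0$, and the gluing step, the constants do not close, so your proof is incomplete at precisely the point you flagged.
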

%


%
\begin{proof}
First,
with the notations of Example \ref{example_Psi_1},
for any $x\in S_{\xi,1}$, the map $s\mapsto\Phi(s,x(s))$ is
$\alpha$-H\"older continuous.
Precisely, for every $s,t\in [0,T]$,
\begin{eqnarray*}
 & &
 d_\Haus(\Phi(t,x(t)),\Phi(s,x(s)))\\
 & &
 \hspace{1.5cm}\leqslant
 d_\Haus(\Phi(t,x(t)),\Phi(s,x(t))) + d_\Haus(\Phi(s,x(t)),\Phi(s,x(s)))\\
 & &
 \hspace{1.5cm}\leqslant
 k_1|t - s|^{\alpha} + k_2\|x(t) - x(s)\|\\
 & &
 \hspace{1.5cm}\leqslant
 (k_1 + k_2\|x -\xi\|_{\alpha,T})|t - s|^{\alpha}\\
 & &
 \hspace{1.5cm}\leqslant
 (k_1 + k_2)|t - s|^{\alpha}
\end{eqnarray*}
and then,
\begin{displaymath}
N_{\alpha,T}(\Phi(.,x(.)))
\leqslant R + k_1 + k_2 = r_0.
\end{displaymath}
Let $1 > T_0 > 0$ be such that
$(1+T_0^\alpha)\rho_w(T_0,r_0,R)\leqslant 1$ (see Remark \ref{rem:DomPsi}).
Applying Theorem \ref{existence_fixed_points} on $[0,T_0]$, with $r = r_0$, $S
= S_{\xi,1}$ and $\Psi =\Psi_{\xi}$ (see Example \ref{example_Psi_1}),
shows that $\Gamma =\Psi\circ\Phi_w$ has a fixed point on $[0,T_0]$.
Since the definition of
$T_0$ is independent of $\xi$,
gluing solutions on successive intervals
provides a fixed point for $\Gamma$ on $[0,T]$, 
which is thus a solution to (\ref{first_order_inclusion}) on $[0,T]$.
\end{proof}
%


%
\begin{remark}\label{rem:refined_inclusion_order_1}
Consider $\gamma\in ]0,1\wedge(\beta/\alpha)]$ such that $\alpha\gamma +\beta > 1$. Thanks to Remark \ref{rem:refined_fixed_point_th}, the statement of Corollary \ref{existence_fixed_points} remains true when, for every $t\in [0,T]$, $\Phi(t,.)$ is $\gamma$-H\"older continuous but not necessarily Lipschitz continuous.
\end{remark}
Now, for $e = 1$, consider $w_0\in
\C^{\beta\textrm{-H\"ol}}([0,T];\mathbb R)$ and a second order
inclusion of the form
\begin{equation}\label{second_order_inclusion}
x(t)\in\xi +
({\rm A}_{\alpha,\rho_{w_0}(T,r,R)})\int_{0}^{t}
\left[({\rm A}_{\alpha,r})\int_{0}^{s}\Phi(u,x(u))\,dw_0(u)\right]dw_0(s).
\end{equation}
For any $x\in\C^{\alpha\textrm{-H\"ol}}([0,T];\mathbb R)$ such that the Aumann-Young integral in Inclusion (\ref{second_order_inclusion}) is well defined, thanks to the integration by parts formula for Young's integral,
\begin{multline*}
 ({\rm A}_{\alpha,\rho_{w_0}(T,r,R)})\int_{0}^{t}\left[({\rm
 A}_{\alpha,r})\int_{0}^{s}\Phi(u,x(u))\,dw_0(u)\right]dw_0(s)\\
 \begin{aligned}
 =&\left\{
 \int_{0}^{t}\int_{0}^{s}\varphi(u)\,dw_0(u)\,dw_0(s)\tq \varphi\in\mathcal S_{\alpha,r}(\Phi(.,x(.)))
 \right\}\\
 =&\left\{
 w_0(t)\int_{0}^{t}\varphi(s)\,dw_0(s) -
 \int_{0}^{t}w_0(s)\varphi(s)\,dw_0(s)\tq \varphi\in\mathcal S_{\alpha,r}(\Phi(.,x(.)))
 \right\}\\
 =&\Biggl\{
 w_0(t)\left(\int_{0}^{t}\varphi(s)(dw_0(s),w_0(s)\,dw_0(s))\right)_{\!1}\\
 & -
 \left(\int_{0}^{t}\varphi(s)(dw_0(s),w_0(s)\,dw_0(s))\right)_{\!2}\tq \varphi\in\mathcal S_{\alpha,r}(\Phi(.,x(.)))
 \Biggr\},
 \end{aligned}
\end{multline*}
where, for $i=1,2$, 
$\left(\int_{0}^{t}\varphi(s)(dw_0(s),w_0(s)\,dw_0(s))\right)_{\!i}$ 
denotes the $i^{\text{th}}$ coordinate of
  $\int_{0}^{t}\varphi(s)(dw_0(s),w_0(s)\,dw_0(s))$.
Then, proving the existence of solutions to (\ref{second_order_inclusion}) amounts to prove that
\begin{displaymath}
\Gamma : x\longmapsto
\Psi_{h,w_0}\left(({\rm A}_{\alpha,r})\int_{0}^{.}\Phi(s,x(s))\,dw(s)\right)
\textrm{ with }
w :=\left(
w_0,\int_{0}^{.}w_0(s)\,dw_0(s)
\right)
\end{displaymath}
has fixed points.
%


%
\begin{corollary}[Second order differential inclusion]
\label{existence_second_order_inclusion}
Assume that $\alpha <\beta$, $\alpha +\beta > 1$ and $r\geqslant r_{w_0,T} := k_1 + k_2(N_{\alpha,T}(w_0) + 1)$. Then, the set of solutions to \eqref{second_order_inclusion} is nonempty.
\end{corollary}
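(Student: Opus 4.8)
The plan is to transcribe the proof of Corollary~\ref{existence_first_order_inclusion} to the second-order setting, feeding the integration-by-parts reduction established just above into Theorem~\ref{existence_fixed_points} through the multifunction $\Psi_{\xi,w_0}$ of Example~\ref{example_Psi_2} (with the constant $h=\xi$ and $f=w_0$). Recall that solving \eqref{second_order_inclusion} has been reduced to finding a fixed point of $\Gamma=\Psi_{\xi,w_0}\circ\Phi_w$, where $w:=\bigl(w_0,\int_0^{\cdot}w_0(s)\,dw_0(s)\bigr)$ and $\Phi_w(x)=\Aint{\alpha,r}_0^{\cdot}\Phi(s,x(s))\,dw(s)$, the scalar selections of $\Phi(\cdot,x(\cdot))$ being integrated component-wise against the two-dimensional signal $w$. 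Before anything else I would check that $w$ is an admissible driving signal, i.e. $w\in\C^{\beta\textrm{-H\"ol}}([0,T];\R^2)$: its first coordinate is $w_0$ by hypothesis, and its second coordinate $\int_0^{\cdot}w_0\,dw_0$ is $\beta$-H\"older by the Young--Love estimate (Theorem~\ref{Young_Love_estimate}), which applies because $\alpha<\beta$ and $\alpha+\beta>1$ force $\beta>1/2$, hence $\beta+\beta>1$.

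Next I would establish the H\"older control of the integrand that fixes the admissible tuning parameter. For $x\in S:=S_{\xi,w_0,1}$, the estimates \eqref{assumption_Phi_1} give $\norm{\Phi(\cdot,x(\cdot))}_{\alpha,T}\leqslant k_1+k_2\norm{x}_{\alpha,T}$. Writing $x=\xi+w_0x_1-x_2$ with $(x_1,x_2)\in S_{0,1}$, so that $\norm{x_1}_{\infty,T}$, $\norm{x_1}_{\alpha,T}$ and $\norm{x_2}_{\alpha,T}$ are all $\leqslant 1$, the product estimate $\norm{w_0x_1}_{\alpha,T}\leqslant\norm{w_0}_{\infty,T}\norm{x_1}_{\alpha,T}+\norm{w_0}_{\alpha,T}\norm{x_1}_{\infty,T}$ yields $\norm{x}_{\alpha,T}\leqslant N_{\alpha,T}(w_0)+1$. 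Since the selections $\varphi$ of $\Phi(\cdot,x(\cdot))$ are scalar-valued and the Steiner point in dimension one is the midpoint, which is $1$-Lipschitz for $d_\Haus$ (so $\StLip{1}=1$ in Proposition~\ref{prop:steinerlip}), the tuning condition \eqref{eq:tuning} reads $r\geqslant\norm{\Phi(\cdot,x(\cdot))}_{\alpha,T}$; hence $\rmin\bigl(\Phi(\cdot,x(\cdot))\bigr)\leqslant k_1+k_2\bigl(N_{\alpha,T}(w_0)+1\bigr)=r_{w_0,T}$. Thus $r\geqslant r_{w_0,T}$ makes $\Phi_w$ well defined on $S$; Conditions~1 and~2 of Theorem~\ref{existence_fixed_points} hold for $\Psi_{\xi,w_0}$ by Example~\ref{example_Psi_2}, and $N_{\alpha,T}(\Phi_w(x))$ is controlled by Corollary~\ref{cor:alpha-norm_bound}.

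I would then obtain a local solution exactly as in the first-order proof: choose $1>T_0>0$ with $(1+T_0^{\alpha})\rho_w(T_0,r)\leqslant 1$, which is possible because $\rho_w(T_0,r)\to 0$ as $T_0\to 0$ (it carries the factor $T_0^{\beta-\alpha}$ with $\beta>\alpha$). Then $\B:=\{\Phi_w(x)\tq x\in S\}$ lies in the domain of $\Psi_{\xi,w_0}$ (Remark~\ref{rem:DomPsi}) and $\Gamma$ maps $S$ into $S$, so Theorem~\ref{existence_fixed_points} delivers a solution of \eqref{second_order_inclusion} on $[0,T_0]$.

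The delicate step, and the genuine difference from the first-order case, is the gluing of these local solutions to cover $[0,T]$. Restarting at $T_0$, the inner integral $\int_0^{\cdot}\Phi\,dw_0$ carries over the accumulated ``velocity'' $v(T_0)=\int_0^{T_0}\Phi\,dw_0$, so that on $[T_0,2T_0]$ the reformulation takes the form $x(t)=h(t)+\int_{T_0}^{t}\int_{T_0}^{s}\Phi\,dw_0\,dw_0$ with the \emph{non-constant} affine shift $h(t)=x(T_0)+v(T_0)\bigl(w_0(t)-w_0(T_0)\bigr)$ in place of the constant $\xi$ --- which is exactly why Example~\ref{example_Psi_2} is stated for a general $\alpha$-H\"older $h$. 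I expect this to be the crux: since $\norm{h}_{\alpha,T}\simeq\abs{v(T_0)}\,\norm{w_0}_{\alpha,T}$ feeds back into the bound on $\norm{\Phi(\cdot,x(\cdot))}_{\alpha,T}$ and hence on the admissible tuning parameter, one must produce an a priori bound on the velocity $v$ along the construction (once more through the Young--Love estimate), uniform in the number of gluing steps, together with the monotonicity $N_{\alpha,[a,b]}(w_0)\leqslant N_{\alpha,T}(w_0)$ for $[a,b]\subset[0,T]$; this would keep the relevant H\"older norms, and hence the tuning parameter and the mesh, controlled across subintervals, so that finitely many applications of the local result assemble into a solution of \eqref{second_order_inclusion} on all of $[0,T]$.
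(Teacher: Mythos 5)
Your local argument is exactly the paper's: the same product estimate giving $\norm{\Phi(\cdot,x(\cdot))}_{\alpha,T}\leqslant k_1+k_2(N_{\alpha,T}(w_0)+1)=r_{w_0,T}$ for $x\in S_{\xi,w_0,1}$, the same smallness condition $(1+T_0^{\alpha})\rho_w(T_0,\cdot)\leqslant 1$ defining the mesh, and the same invocation of Theorem \ref{existence_fixed_points} with $S=S_{\xi,w_0,1}$ and $\Psi=\Psi_{\xi,w_0}$ from Example \ref{example_Psi_2}. (A cosmetic difference: the paper checks the tuning condition through the Demyanov bound \eqref{eq:tuning2}, writing the estimates \eqref{assumption_Phi_1} directly with $d_\D$ --- legitimate here because $d_\D$ and $d_\Haus$ coincide on compact intervals of $\R$ --- whereas you use \eqref{eq:tuning} with $\StLip{1}=1$; both yield the threshold $r_{w_0,T}$.) Your preliminary verification that $w=(w_0,\int_0^{\cdot}w_0\,dw_0)$ is an admissible $\beta$-H\"older signal is left implicit in the paper, but it is correct and worth making explicit.

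Where you diverge is the gluing, and there you have put your finger on a real weakness: the paper dispatches this step with the single sentence ``since the definition of $T_0$ is independent of $\xi$'', i.e.\ it treats the restart exactly as in the first-order case, as a mere change of initial condition. As you observe, that is not what happens for the second-order problem: the continuation on $[T_0,2T_0]$ carries the shift $h(t)=x(T_0)+v(T_0)\bigl(w_0(t)-w_0(T_0)\bigr)$, where $v(T_0)$ is the accumulated inner integral, and $\norm{h}_{\alpha,T}\leqslant\abs{v(T_0)}\,\norm{w_0}_{\alpha,T}$ re-enters the bound on $\norm{\Phi(\cdot,x(\cdot))}_{\alpha,T}$ over $S_{h,w_0,1}$. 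However, your repair is only a sketch (``I expect this to be the crux\dots''), and note that even once it is carried out --- the uniform velocity bound you ask for is indeed available from the Young--Love estimate, since the selections are bounded by $R$ and have H\"older seminorm at most $r$ --- the threshold it imposes on later subintervals is $k_1+k_2\bigl(V\norm{w_0}_{\alpha,T}+N_{\alpha,T}(w_0)+1\bigr)$ with $V$ the velocity bound, which is strictly larger than $r_{w_0,T}$ as soon as $v\not=0$. So this route, as sketched, proves existence only for $r$ above that larger threshold (with a correspondingly smaller mesh), not for every $r\geqslant r_{w_0,T}$ as the corollary asserts. In short: you reproduce the paper's proof where it is complete, and where you go beyond it you expose --- without closing --- a gap that is in fact present in the paper's own one-line gluing argument.
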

%


%
\begin{proof}
First, consider $x_{w_0,\xi} :=\xi + w_0x_1 - x_2$ with $x = (x_1,x_2)\in S_{0,1}$. The map $s\mapsto\Phi(s,x_{w_0,\xi}(s))$ is $\alpha$-H\"older continuous. Precisely, for every $s,t\in [0,T]$,
\begin{eqnarray*}
 & &
 d_\D(\Phi(t,x_{w_0,\xi}(t)),\Phi(s,x_{w_0,\xi}(s)))\\
 & &
 \hspace{2cm}\leqslant
 d_\D(\Phi(t,x_{w_0,\xi}(t)),\Phi(s,x_{w_0,\xi}(t))) \\
 & &
 \hspace{3cm}
 + d_\D(\Phi(s,x_{w_0,\xi}(t)),\Phi(s,x_{w_0,\xi}(s)))\\
 & &
 \hspace{2cm}\leqslant
 k_1|t - s|^{\alpha} + k_2|x_{w_0,\xi}(t) - x_{w_0,\xi}(s)|\\
 & &
 \hspace{2cm}\leqslant
 k_1|t - s|^{\alpha} + k_2(|w_0(t)(x_1(t) - x_1(s))| \\
 & &
 \hspace{3cm}
 + |(w_0(t) - w_0(s))x_1(s)| + |x_2(t) - x_2(s)|)\\
 & &
 \hspace{2cm}\leqslant
 (k_1 + k_2(N_{\alpha,T}(w_0) + 1))|t - s|^{\alpha}
\end{eqnarray*}
and then,
\begin{displaymath}
N_{\alpha,T}(\Phi(.,x(.)))
\leqslant k_1 + k_2(N_{\alpha,T}(w_0) + 1) = r_{w_0,T}.
\end{displaymath}
Let $T_0 > 0$ be such that $(1+T_0^\alpha)\rho_w(T_0,r_{w_0,T},R)\leqslant 1$.
By Theorem \ref{existence_fixed_points} applied on $[0,T_0]$,
with $r = r_{w_0,T}$, $S = S_{\xi,w_0,1}$ and $\Psi =\Psi_{\xi,w_0}$ (see Example \ref{example_Psi_2}), $\Gamma =\Psi\circ\Phi_w$ has a fixed point, which is thus a solution to (\ref{second_order_inclusion}) on $[0,T_0]$. Since the definition of $T_0$ is independent of $\xi$, $\Gamma$ has a fixed point, which is thus a solution to (\ref{second_order_inclusion}) on $[0,T]$.
\end{proof} 
Let us conclude with applications to stochastic inclusions. Consider a $(d - 1)$-dimensional fractional Brownian motion $B = (B(t))_{t\in [0,T]}$ of Hurst index $H\in (1/2,1)$, which is a centered Gaussian process such that
\begin{displaymath}
\mathbb E(B_i(s)B_j(t)) =
\frac{1}{2}(t^{2H} + s^{2H} - |t - s|^{2H})\delta_{i,j}
\end{displaymath}
for every $s,t\in [0,T]$ and $i,j\in\{1,\dots,d - 1\}$, and let $(\Omega,\mathcal A,\mathbb P)$ be the associated canonical probability space. By the Garcia-Rodemich-Rumsey lemma (see Nualart \cite[Lemma A.3.1]{NUALART06}), the paths of $B$ are $\beta$-H\"older continuous for any $\beta\in (0,H)$. Consider $r > 0$ and $\alpha\in (0,1)$ such that $\alpha +\beta > 1$ and $\alpha <\beta$. Then, for any measurable map $F :\Omega\rightarrow\overline B_{\alpha,\ccm}(0,r)$, one can define a set-valued stochastic integral of $F$ with respect to $B$ by
\begin{displaymath}
\left[\Aint{\alpha,r}_{0}^{t}F(s)dB(s)\right](\omega) :=
\Aint{\alpha,r}_{0}^{t}F(s,\omega)dB(s,\omega)
\tq 
\omega\in\Omega
\textrm{, }
t\in [0,T].
\end{displaymath}
This allows to consider the stochastic inclusion
\begin{equation}\label{SDI}
X(t)\in\xi +\Aint{\alpha,r}_{0}^{t}\Phi(s,X(s))\,dW(s)
\tq t\in [0,T],
\end{equation}
where $\Phi :\,[0,T]\times\R^e\rightarrow\cc(M_{e,d}(\mathbb R))$ fulfills Assumptions (\ref{assumption_Phi_1}) and (\ref{assumption_Phi_2}), and $W(t) := (t,B_1(t),\dots,B_{d - 1}(t))$ for every $t\in [0,T]$. By Corollary \ref{existence_first_order_inclusion}, for every $r\geqslant k_1 + k_2 + R$, Inclusion (\ref{SDI}) has at least one pathwise solution. One can also consider the one-dimensional second order stochastic inclusion
\begin{equation}\label{SDI_2nd_order}
X(t)\in\xi + ({\rm A}_{\alpha,\rho_B(T,r,R)})\int_{0}^{t}
\left[\Aint{\alpha,r}_{0}^{s}\Phi(u,X(u))dB(u)\right]dB(s)
\tq t\in [0,T].
\end{equation}
By Corollary \ref{existence_first_order_inclusion}, for every $r\geqslant k_1 + k_2(N_{\alpha,T}(B) + 1)$, Inclusion (\ref{SDI_2nd_order}) has at least one pathwise solution.
%

\noindent
\textbf{Acknowledgments.}
We thank the reviewers for their careful reading and valuable comments
which helped improve 
this article.

This work was funded by RFBR and CNRS, project number PRC2767.
We also thank the GDR TRAG (CNRS) for its support.


%

\end{document}